\definecolor{lightyellow}{HTML}{fffcf1}
\newcommand{\reva}[1]{\color{black}#1 \color{black}}
\definecolor{lightyellow}{HTML}{fffcf1}
\definecolor{lightyellow}{HTML}{fffcf1}
\newcommand{\con}{\mathbf{U}}       
\newcommand{\prim}{\mathbf{W}}      
\newcommand{\vel}{\bm{v}}
\newcommand{\af}{\mathbf{f}}
\newcommand{\ent}{\mathcal{U}}      
\newcommand{\entf}{\mathcal{F}}     
\newcommand{\df}[2]{\frac{\partial #1}{\partial #2}} 
\newcommand{\dd}[2]{\frac{\mathrm d #1}{\mathrm d #2}} 
\newcommand{\half}{{\frac{1}{2}}}   
\newcommand{\avg}[1]{\bar{#1}}      
\newcommand{\evar}{\bm{\mathcal{V}}}          
\newcommand{\Uh}{{\mathbf{U}_h}}
\newcommand{\Vkh}{{\mathbf{V}^k_h}}
\newcommand{\ph}{{\boldsymbol{\Upphi}_h}}
\newcommand{\sour}{\bm{\mathcal{S}}}
\newcommand{\nsour}{\mathcal{S}}
\newcommand{\ener}{\mathcal{E}}
\newcommand{\Diff}{\mathbf{D}}
\newcommand{\Mass}{\mathbf{M}}
\newcommand{\Stiff}{\mathbf{S}}
\newcommand{\Bdry}{\mathbf{B}}
\newcommand{\eigval}{\mathbf{\Lambda}}
\newcommand{\imh}{{i-\half}}
\newcommand{\iph}{{i+\half}}
\newcommand{\jmh}{{j-\half}}
\newcommand{\jph}{{j+\half}}
\newcommand{\intdx}{\mathrm{d} x}
\newcommand{\dx}{\Delta x}
\newcommand{\dy}{\Delta y}
\newcommand{\second}{\mathbf{O2\_ESDG}}
\newcommand{\third}{\mathbf{O3\_ESDG}}
\newcommand{\fourth}{\mathbf{O4\_ESDG}}
\begin{document}

\title{Entropy stable discontinuous Galerkin schemes for two-fluid relativistic plasma flow equations}
\titlerunning{Entropy Stable DG schemes for two-fluid relativistic plasma flow equations}

\author{Deepak Bhoriya  \and
        Biswarup Biswas \and
        Harish Kumar    \and
        Praveen Chandrashekhar 
}

\institute{D. Bhoriya (Corresponding author) \at
           {Department of Mathematics, Indian Institute of Technology Delhi, New Delhi, India}
           \\ \email{dkbhoriya@gmail.com}
           \and
           B. Biswas \at
           {Department of Mathematics, Mahindra University, Hyderabad, 500043, India}
           \\ \email{biswarup.biswas@mahindrauniversity.edu.in}
           \and
           H. Kumar \at
           {Department of Mathematics, Indian Institute of Technology Delhi, New Delhi, India}
           \\ \email{hkumar@iitd.ac.in}
           \and
           P. Chandrashekhar \at
           {Centre for Applicable Mathematics, Tata Institute of Fundamental Research, Bangalore, India}
           \\ \email{praveen@tifrbng.res.in}
}

\date{Received: date / Accepted: date}

\maketitle

\begin{abstract}
This article proposes entropy stable discontinuous Galerkin schemes (DG) for two-fluid relativistic plasma flow equations. These equations couple the flow of relativistic fluids via electromagnetic quantities evolved using Maxwell's equations. The proposed schemes are based on the Gauss-Lobatto quadrature rule, which has the summation by parts (SBP) property. We exploit the structure of the equations having the flux with three independent parts coupled via nonlinear source terms. We design entropy stable DG schemes for each flux part, coupled with the fact that the source terms do not affect entropy, resulting in an entropy stable scheme for the complete system. The proposed schemes are then tested on various test problems in one and two dimensions to demonstrate their accuracy and stability.

\keywords{Entropy stable discontinuous Galerkin schemes \and Two-fluid relativistic plasma flows \and Balance laws}

\subclass{MSC 35L25 \and MSC 35L56 \and MSC 35L75 \and MSC 65M60}
\end{abstract}

	\section{Introduction}
Many astrophysical phenomena like gamma-ray bursts (GRBs), pulsar winds, relativistic jets from active galactic nuclei (AGNs), and quasars have fluid moving with speeds close to the speed of light~\cite{Landau1987,Gallant1994,Mochkovitch1995,Wardle1998}. In addition, the flows consist of charged particles that require electric and magnetic fields to be considered in the mathematical model. Hence, we need to model relativistic plasma flows; one of the simplest models is relativistic magnetohydrodynamics (RMHD)~\cite{Mochkovitch1995,VanPutten1996,Blandford2019}, and several numerical methods have been developed for this model~\cite{Komissarov1999,Balsara2001,Gammie2003,Anton2010,Kim2014,Mignone2006a,DelZanna2007,Balsara2016aderweno,Wu2021,Mattia2021,Duan2020DGRMHD,Duan2022}. However, for important astrophysical phenomena, the RMHD model is not suitable~\cite{Barkov2014,Amano2016,Balsara2016}. Following the developments in modeling of non-relativistic plasma flows using multi-fluid models~\cite{Baboolal2001,Shumlak2003,Hakim2006,bond2016plasma,Kumar2012}, recently several authors have considered two-fluid relativistic plasma flow models~\cite{Zenitani2009a,Zenitani2009b,Amano2013,Amano2016,Balsara2016}.
	
The two-fluid relativistic plasma flow model equations are a system of hyperbolic balance laws. Due to the nonlinear flux, solutions can develop discontinuities, even when the initial conditions are smooth and weak solutions need to be considered. However, often for hyperbolic PDEs, the weak solutions are not unique~\cite{GodlewskiEdwige1991,LeVeque2002} and an additional condition, in the form of entropy stability, is imposed to select the suitable solution. This ensures the uniqueness of the solutions in the case of scalar conservation laws~\cite{GodlewskiEdwige1991}. For nonlinear systems, recently, it has been shown that even the entropy criterion is not enough to guarantee uniqueness~\cite{Chiodaroli2015}. However, the entropy criteria provides a nonlinear stability estimate for the solution. So, it is desirable to have a numerical scheme that preserves such a stability estimate at the discrete level, leading to stable numerical schemes.
	
Recently, there have been considerable developments in designing higher-order entropy stable finite difference schemes~\cite{Tadmor1987, Tadmor2003, Fjordholm2012, Fjordholm2013, Ismail2009, Chandrashekar2013,Chandrashekar2016,Kumar2012, Sen2018}, and  the Discontinuous Galerkin (DG) methods in \cite{Carpenter2014,Gassner2016a,Gassner2016,Friedrich2019}. In \cite{Chen2017}, authors have presented a framework for entropy stable DG schemes based on the SBP property. This framework has been applied to various hyperbolic systems in \cite{Liu2018,Duan2021,Biswas2021}.
	
In addition to all the challenges in designing stable numerical methods for hyperbolic PDEs, there are several additional difficulties in developing a suitable numerical method for two-fluid relativistic plasma flow model equations. First, to compute primitive variables from the conserved variables, one needs to solve a nonlinear equation. Secondly, one needs to preserve the divergence constraints on magnetic and electric fields. Also, it is highly desirable to have a positivity-preserving and entropy stable scheme. However, it is difficult to design higher-order schemes satisfying all these properties, and often only one or few of these two nonlinear stability properties can be respected in a numerical scheme.
	
For relativistic hydrodynamics (RHD) several numerical schemes have been designed~\cite{Landau1987,Anile1989,Wilson2010}. An Eulerian explicit finite difference scheme is designed in ~\cite{Wilson1979}.  In~\cite{Falle1996}, the authors propose a Van-Leer based scheme. High-order schemes using piece-wise parabolic reconstruction are proposed in~\cite{Mignone2005,Marti1996}.  In~\cite{Ryu2006}, authors have described the total variation diminishing schemes. An adaptive mesh refinement (AMR)  based finite difference WENO scheme is presented in~\cite{Zhang2006}. More recently,  bound-preserving discontinuous Galerkin methods~\cite{Qin2016},  and physical-constraint-preserving central DG schemes~\cite{Wu2016} are also proposed. For entropy stability,  entropy stable finite difference schemes~\cite{Bhoriya2020} and entropy stable adaptive moving mesh schemes~\cite{Duan2021adaptivemeshRHD} have been proposed. Also recently, in~\cite{Biswas2022}, authors designed entropy stable DG schemes for RHD using the framework in \cite{Chen2017}.
	
For the relativistic plasma flow model based on RMHD equations, \cite{Komissarov1999} introduced a Godunov type scheme. An overview of grid-based numerical methods used in relativistic hydrodynamics and magnetohydrodynamics is presented in~\cite{Marti2015}. A total variation diminishing scheme is designed in~\cite{Balsara2001}. An ADER-WENO formulation is addressed in~\cite{Balsara2016aderweno}. A finite difference entropy stable scheme is designed in~\cite{Wu2020}, while a DG version of the entropy stable schemes, based on the nodal formulation, is given in~\cite{Duan2020DGRMHD}. Recently, the authors in~\cite{Duan2022} extended the entropy stable finite difference schemes to use adaptive mesh refinements. A discontinuous Galerkin scheme that preserves the physical constraints is presented in~\cite{Wu2021}.
	
For the two-fluid relativistic plasma flow model, in \cite{Zenitani2009a,Zenitani2009b}, authors have simulated the relativistic magnetic reconnection problem. Simulations of the standing shock front are performed in \cite{Amano2013}, which is used to study the importance of an electric field-dominated regime in strongly magnetized relativistic plasma flows. A third-order accurate numerical scheme was proposed in \cite{Barkov2014}. The issues of divergence-free magnetic field and constraint-preserving electric field are addressed in \cite{Amano2016,Balsara2016}. More recently, in \cite{bhoriya_high-order_2022}, authors have proposed high-order entropy stable finite difference numerical schemes. In addition, an implicit-explicit (IMEX) time stepping is also proposed to overcome the stiff source terms.
	
In this article, we design high-order discontinuous Galerkin entropy stable numerical schemes for the two-fluid relativistic plasma flow equations. We proceed as follows:
\begin{itemize}
\item The flux vector of two-fluid relativistic plasma flow equations consists of three sets of independent fluxes, one each for the relativistic fluid (ion or electron) and one linear flux for Maxwell's equations. The fluid flux for ion (and electron) is exactly the RHD flux with ion (electron) fluid variables. These three parts are coupled via source terms only. Hence, we use RHD schemes proposed in \cite{Biswas2022} and \cite{Bhoriya2020} to discretize the flux.
		
\item The source terms do not affect entropy~\cite{bhoriya_high-order_2022}. Hence entropy evolution depends on the suitable discretization of the flux. We then use the framework of \cite{Chen2017} and couple it with the three-dimensional version of entropy conservative numerical flux in~\cite{Bhoriya2020,bhoriya_high-order_2022}. 
\end{itemize}

The rest of the article is organized as follows: In Section~\ref{sec:equations}, we present the multi-dimensional, two-fluid relativistic plasma flow equations. We also discuss the hyperbolicity of the two-fluid relativistic plasma system in two dimensions and present the entropy framework for the complete system. In Section~\ref{sec:DG_SCHEME}, we describe the entropy stable Discontinuous Galerkin methods in one and two dimensions. We begin by defining the Galerkin projection of the hyperbolic conservation law with integral approximations using the Gauss-Lobatto quadrature rule. The section also includes the construction of entropy conservative fluxes for the two-fluid relativistic plasma flow. We use the Local Lax-Friedrichs flux at the cell interfaces to obtain entropy stable schemes. The time discretization is presented in Section~\ref{sec:full_disc}. In Section~\ref{sec:num_test}, we present several one and two-dimensional numerical test cases.

	\section{Two-fluid relativistic plasma flow model equations} \label{sec:equations}
	
	The two-fluid relativistic plasma flow equations consist of electron and ion species. These species evolve in the presence of electric and magnetic fields. We use the subscript $\alpha$ to denote the ion and electron species, i.e., $\alpha \in \{i,e\}$, where `$i$' refers to the ion species and `$e$' refers to the electron species. Following ~\cite{Amano2016,Balsara2016}, we consider the following set of equations:
	\begin{subequations}\label{eq:TFRP_sys}
		\begin{align}
			%
			\frac{\partial D_\alpha}{\partial t} + \nabla \cdot (D_\alpha \vel_\alpha) & = 0, \label{density}
			\\
			%
			\frac{\partial \bm{M}_\alpha}{\partial t} + \nabla \cdot (\bm{M}_\alpha \vel_\alpha  + p_\alpha \mathbf{I}) & = r_\alpha \Gamma_\alpha \rho_\alpha (\bm{E}+\vel_\alpha \times \bm{B}), \label{momentum} 
			\\
			%
			\frac{\partial \mathcal{E}_\alpha}{\partial t} + \nabla \cdot ((\mathcal{E}_\alpha+p_\alpha)\vel_\alpha) & = r_\alpha \Gamma_\alpha \rho_\alpha (\vel_\alpha \cdot \bm{E}) ,	\label{energy}
			\\
			%
			\dfrac{\partial \bm{B}}{\partial t} +  \nabla \times \bm{E}
			+ \kappa \nabla \psi  &= 0, \label{max1}
			\\
			%
			\dfrac{\partial \bm{E}}{\partial t} - \nabla \times \bm{B}
			+ \chi \nabla \phi & = -\bm{j} , \label{max2}
			\\
			%
			 \dfrac{\partial \phi} {\partial t} + \chi\nabla \cdot \bm{E}
			& = \chi\rho_c, \label{max3}
			\\
			%
			\dfrac{\partial \psi} {\partial t} + \kappa\nabla \cdot \bm{B}
			& = 0, \label{max4}
			%
		\end{align}
	\end{subequations}
        %
        %
	where $D_\alpha, \bm{M}_\alpha,$ and $\mathcal{E}_\alpha$ are the conserved quantities, denoting mass density, momentum density, and total energy density, respectively. These conservative quantities can be expressed as
	\begin{align*}
		D_\alpha           & = \rho_\alpha \Gamma_\alpha,                \\
		\bm{M}_\alpha      & = \rho_\alpha h_\alpha \Gamma^2_\alpha \bm{v}_\alpha, \\
		\mathcal{E}_\alpha & = \rho_\alpha h_\alpha \Gamma^2_\alpha - p_\alpha,
	\end{align*}
	where, $\rho_\alpha, \bm{v}_\alpha, p_\alpha, \Gamma_\alpha$ and $h_\alpha$ are the proper mass density, velocity, isotropic gas pressure, Lorentz factor, and specific enthalpy, respectively. The speed of light is set to unity throughout the article; consequently, the Lorentz factor $\Gamma_\alpha$ is given by
	\begin{equation*}
		\Gamma_\alpha = \dfrac{1}{\sqrt{1 - |\bm{v}_\alpha|^2}}, \qquad \text{ with} \qquad \bm{v}_\alpha = \left(v_{x_\alpha},v_{y_\alpha}, v_{z_\alpha}\right).
	\end{equation*}
	The magnetic field vector is denoted by $\bm{B}=(B_x, B_y, B_z)$, and electric field vector is denoted by $\bm{E}=(E_x, E_y, E_z)$. The charge to mass ratio is denoted by $r_\alpha=\dfrac{q_\alpha}{m_\alpha},$ where $q_\alpha$ is the particle charge and $m_\alpha$ is the particle mass. The total charge density $\rho_c$ and the current density vector $\bm{j}$ are given by
	\begin{equation*}
		\rho_c = r_i \rho_i \Gamma_i  + r_e \rho_e \Gamma_e,
		\qquad
		\bm{j} = r_i \rho_i \Gamma_i \bm {v}_i + r_e \rho_e \Gamma_e \bm{v}_e.
	\end{equation*}
	We denote $\gamma_\alpha$ as the ratio of specific heats. Using the ideal equation of state, we introduce the enthalpy $h_\alpha$ as follows
	\begin{equation} \label{EOS}
		h_\alpha = 1+ \frac{\gamma_\alpha}{\gamma_\alpha - 1} \frac{p_\alpha}{\rho_\alpha}. 
	\end{equation}
	The polytropic index is given by $n_\alpha=k_\alpha-1$, and the sound speed is denoted by $c_\alpha$. The expression for $c_\alpha$ is given by
	\begin{equation*}
		c_\alpha^2=\frac{k_\alpha p_\alpha }{n_\alpha \rho_\alpha h_\alpha} \qquad
		\text{where} \qquad
		k_\alpha = \dfrac{\gamma_\alpha}{\gamma_\alpha-1}.
	\end{equation*}
To overcome the magnetic and electric field constraints, in \eqref{max1}-\eqref{max4}, we consider {\em perfectly hyperbolic formulation} of the Maxwell's equations from \cite{Munz2000}. We further remark that more appropriate methods to deal with these constraints are presented in \cite{Amano2013,Balsara2016}, however, they need a staggered evolution of the conservative variables.
	
	\subsection{Hyperbolicity} \label{subsec:hyp}
	Let us introduce the vector of conservative variables $\mathbf{U}$ as
	\begin{equation*}
		\mathbf{U}=(\mathbf{U}_i, \mathbf{U}_e, \mathbf{U}_m )^\top,
	\end{equation*}
	where
	\begin{equation*}
		\mathbf{U}_i = (D_i, \bm{M}_i, \mathcal{E}_i), \qquad
		\mathbf{U}_e = (D_e, \bm{M}_e, \mathcal{E}_e), \qquad
		\mathbf{U}_m = (\bm{B}, \ \bm{E},\phi,\psi).
	\end{equation*}
	In this article, we consider the two-dimensional case; the extension to three dimensions is straightforward and does not result in additional theoretical or implementation issues.  We use the notations $\mathbf{f}^x$, $\mathbf{f}^y$ for the $x-$ and $y-$directional fluxes, respectively. The fluxes $\mathbf{f}^x$ and $\mathbf{f}^y$ consist of three independent fluxes, two for each fluid and linear Maxwell's flux i.e. 
	\begin{equation*}
		\mathbf{f}^x = ( \mathbf{f}^x_i, \mathbf{f}^x_e, \mathbf{f}^x_m )^\top,
		\qquad
		\mathbf{f}^y = ( \mathbf{f}^y_i, \mathbf{f}^y_e, \mathbf{f}^y_m )^\top,
	\end{equation*}
	where the explicit expressions for $\mathbf{f}^d_\alpha$ ($\alpha\in \{i,e\},d\in\{x,y\}$) and $\mathbf{f}^d_m$  are given by
	\begin{align*}
		\mathbf{f}_\alpha^d & = \mathbf{f}_\alpha^d(\mathbf{U}_\alpha) =
		\begin{cases}
			(D_\alpha v_{x_\alpha}, M_{x_\alpha} v_{x_\alpha} + p_\alpha, M_{y_\alpha} v_{x_\alpha}, M_{z_\alpha} v_{x_\alpha}, M_{x_\alpha}) & \text{if } d=x, \\
			(D_\alpha v_{y_\alpha}, M_{x_\alpha} v_{y_\alpha} , M_{y_\alpha} v_{y_\alpha}+ p_\alpha, M_{z_\alpha} v_{y_\alpha}, M_{y_\alpha}) & \text{if } d=y,
		\end{cases}
		\intertext{and}
		\mathbf{f}_m^d      & = \mathbf{f}_m^d(\mathbf{U}_m) =
		\begin{cases}
			(\kappa \psi, - E_z,  E_y, \chi \phi,  B_z, - B_y, \chi E_x,\kappa B_x) & \text{if } d=x, \\
			( E_z, \kappa \psi, - E_x, - B_z, \chi \phi,  B_x, \chi E_y,\kappa B_y) & \text{if } d=y.
		\end{cases}, \label{exact_max_flux}
	\end{align*}
	Similarly, the source terms can be written in a convenient form by introducing the source vector $\sour= (\sour_i, \sour_e, \sour_m)^\top$, where the components have the following explicit expressions.
	\begin{align*}
		\sour_\alpha (\mathbf{U}_\alpha,\mathbf{U}_m)=   
		& 
		(0, r_\alpha D_\alpha(E_x + v_{y_\alpha} B_z - v_{z_\alpha}B_y), r_\alpha D_\alpha(E_y + v_{z_\alpha} B_x - v_{x_\alpha}B_z), \\
		&
		r_\alpha D_\alpha(E_z + v_{x_\alpha} B_y - v_{y_\alpha}B_x), 
		r_\alpha D_\alpha(v_{x_\alpha}E_x + v_{y_\alpha}E_y + v_{z_\alpha}E_z)), 
		\\
		\sour_m (\mathbf{U}_i,\mathbf{U}_e)= 
		& 
		(0, 0, 0, -j_x, -j_y, -j_z,\chi\rho_c,0).
	\end{align*}
	%
	Using the above notations, the system~\eqref{eq:TFRP_sys}, for the two-dimensional case, can be written in the balance law form as
	\begin{equation}
		\frac{\partial \mathbf{U}}{\partial t}+\frac{\partial \mathbf{f}^x}{\partial x} +\frac{\partial \mathbf{f}^y}{\partial y}= \sour \label{conservedform},
	\end{equation}
	and in quasi-linear form as
	\begin{equation} \label{conservedform_A}
		\frac{\partial \mathbf{U}}{\partial t}+ \mathbf{A}^x\frac{\partial \mathbf{U}}{\partial x } + \mathbf{A}^y\frac{\partial \mathbf{U}}{\partial y }= \sour \qquad 
		\text{where} \qquad
		\mathbf{A}^x=\frac{\partial \mathbf{f}^x}{\partial \mathbf{U}}, 
		\quad
		\mathbf{A}^y=\frac{\partial \mathbf{f}^y}{\partial \mathbf{U}}.
	\end{equation}
	The matrices $\mathbf{A}^x$ and $\mathbf{A}^y$ are the Jacobian matrices of the flux vectors with respect to the conservative vectors. We focus on the $x-$directional case, having the Jacobian matrix $\mathbf{A}^x$.
	
	Let us also introduce the vector of {\em primitive variables}, $\prim$=$(\prim_i, \prim_e, \prim_m)^\top$, where $\prim_i = (\rho_i, \vel_i, p_i)$, $\prim_e = (\rho_e, \vel_e, p_e)$ and $\prim_m=(\bm{B,E},\phi,\psi)$. Unlike the non-relativistic case, the conversion from conservative to primitive variables is not straightforward due to the presence of the Lorentz factor. We follow the \reva{numerical root-solver based} algorithm described in~\cite{Schneider1993,Bhoriya2020} to extract the primitive variables from the conservative variables. \reva{For the sake of completeness, we give a brief description of the algorithm in Appendix~\ref{ap:cons2prim}.}  Using the chain rule, the matrix $\mathbf{A}^x$  can be written as,
	\begin{equation*}
		\mathbf{A}^x= \frac{\partial \mathbf{f}^x}{\partial \mathbf{W}} \frac{\partial \mathbf{W}}{\partial \mathbf{U}}.
	\end{equation*}
	The eigenvalues of matrix $\mathbf{A}^x$ and $\frac{\partial \mathbf{f}^x}{\partial \mathbf{W}}$ are same since they are related by a similarity transformation, and can be shown to be
	\begin{align*}
		\eigval^x
		=
		\biggl\{
		& \frac{(1-c_i^2)v_{x_i}-(c_i/\Gamma_i) \sqrt{Q_i^x}}{1-c_i^2 |\vel_i|^2},\
		v_{x_i}, \ v_{x_i}, \ v_{x_i},
		\frac{(1-c_i^2)v_{x_i}+(c_i/\Gamma_i) \sqrt{Q_i^x}}{1-c_i^2 |\vel_i|^2},
		\nonumber
		\\
		& \frac{(1-c_e^2)v_{x_e}-(c_e/\Gamma_e) \sqrt{Q_e^x}}{1-c_e^2 |\vel_e|^2},\
		v_{x_e}, \ v_{x_e}, \ v_{x_e},
		\frac{(1-c_e^2)v_{x_e}+(c_e/\Gamma_e) \sqrt{Q_e^x}}{1-c_e^2 |\vel_e|^2},
		\nonumber
		\\
		& -\chi, \ -\kappa, \ -1, \ -1, \ 1, \ 1, \ \kappa, \ \chi
		\biggr\},
	\end{align*}
	where, $Q_\alpha^x=1-v_{x_{\alpha}}^2-c_\alpha^2 (v_{y_\alpha}^2+v_{z_\alpha}^2), \ \alpha \in \{ i, \ e \}$. The domain of admissible solutions is
	\begin{equation*}
		\Omega = \{ \con \in \mathbb{R}^{18} : \ \rho_i, \rho_e>0, \ p_i,p_e>0, \ |\vel_i|, |\vel_e| < 1 \}.
	\end{equation*}
	If $\con \in \Omega$ and $\gamma_\alpha \in (1,2]$ then $c_\alpha<1$ and $Q_\alpha^x > 0$ and all the eigenvalues are real. A complete set of right eigenvectors for the Jacobian  matrix $\mathbf{A}^x$ is given in Appendix~\eqref{ap:right_eig}. We can now state the following result:
	\begin{lemma}
		The conservative system~\eqref{conservedform} is hyperbolic for the states $\con \in \Omega$, with real set of eigenvalues and a complete set of right eigenvectors. 
	\end{lemma}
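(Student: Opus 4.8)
The plan is to exploit the fully decoupled block structure of the flux. Because $\mathbf{f}^x=(\mathbf{f}^x_i(\mathbf{U}_i),\mathbf{f}^x_e(\mathbf{U}_e),\mathbf{f}^x_m(\mathbf{U}_m))^\top$, with each component depending only on its own group of variables (and likewise the map $\mathbf{U}\mapsto\mathbf{W}$ respects this splitting, since $\prim_m=\mathbf{U}_m$), the Jacobian is block diagonal, $\mathbf{A}^x=\mathrm{diag}(\mathbf{A}^x_i,\mathbf{A}^x_e,\mathbf{A}^x_m)$ with $\mathbf{A}^x_\alpha=\partial\mathbf{f}^x_\alpha/\partial\mathbf{U}_\alpha$ and $\mathbf{A}^x_m=\partial\mathbf{f}^x_m/\partial\mathbf{U}_m$. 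A block-diagonal matrix is $\mathbb{R}$-diagonalizable exactly when each diagonal block is; its spectrum is the union of the blocks' spectra, and a full system of eigenvectors is obtained by padding each block's eigenvectors with zeros in the other slots. So it suffices to analyze the three blocks separately, and the same reasoning applies verbatim to $\mathbf{A}^y$.

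For the Maxwell block, $\mathbf{f}^x_m$ is linear in $\mathbf{U}_m$, so $\mathbf{A}^x_m$ is a constant $8\times 8$ matrix. A direct computation of its characteristic polynomial shows its eigenvalues are $\{-\chi,-\kappa,-1,-1,1,1,\kappa,\chi\}$, all real, and that it admits a complete set of eigenvectors; this is precisely the strong hyperbolicity of the perfectly hyperbolic Maxwell formulation of \cite{Munz2000}, whose explicit right eigenvectors we collect in Appendix~\ref{ap:right_eig}.

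For each fluid block I would use the chain rule $\mathbf{A}^x_\alpha=\frac{\partial\mathbf{f}^x_\alpha}{\partial\mathbf{W}_\alpha}\,\frac{\partial\mathbf{W}_\alpha}{\partial\mathbf{U}_\alpha}$ together with the fact that the conservative-to-primitive change of variables is a diffeomorphism on $\Omega$, so $\partial\mathbf{W}_\alpha/\partial\mathbf{U}_\alpha$ is invertible there and $\mathbf{A}^x_\alpha$ is \emph{similar} to the primitive-variable flux Jacobian $\partial\mathbf{f}^x_\alpha/\partial\mathbf{W}_\alpha$. The latter is the classical RHD flux Jacobian, and its eigenvalues are the five entries of $\eigval^x$ attached to species $\alpha$; using the bound $c_\alpha<1$ (valid for $\con\in\Omega$ and $\gamma_\alpha\in(1,2]$) one gets $1-c_\alpha^2|\vel_\alpha|^2>0$ and $Q_\alpha^x=1-v_{x_\alpha}^2-c_\alpha^2(v_{y_\alpha}^2+v_{z_\alpha}^2)>0$, so all five eigenvalues are real. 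A direct computation — equivalently, inspection of the explicit right eigenvectors listed in Appendix~\ref{ap:right_eig} — shows these eigenvectors are linearly independent, in particular that the triple eigenvalue $v_{x_\alpha}$ carries a three-dimensional eigenspace, so $\partial\mathbf{f}^x_\alpha/\partial\mathbf{W}_\alpha$, and hence $\mathbf{A}^x_\alpha$, is diagonalizable over $\mathbb{R}$.

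Assembling the three blocks, $\mathbf{A}^x$ has the $5+5+8=18$ real eigenvalues listed in $\eigval^x$ and a complete set of $18$ linearly independent right eigenvectors; by the identical argument in the $y$-direction (and, for oblique directions, by the rotational invariance of the RHD and Maxwell subsystems) the same holds for every directional Jacobian, so \eqref{conservedform} is hyperbolic on $\Omega$. The only genuinely delicate step is the fluid block: checking that the conservative-to-primitive map is a well-defined invertible transformation on all of $\Omega$ (so that the similarity reduction is legitimate), and verifying that the listed RHD eigenvectors — especially those for the degenerate eigenvalue $v_{x_\alpha}$ — are truly independent, which is what upgrades the statement from weak to strong hyperbolicity.
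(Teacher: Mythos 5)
Your proposal is correct and follows essentially the same route as the paper: both pass to the primitive-variable quasilinear form via the chain rule and the invertibility of $\partial\mathbf{W}/\partial\mathbf{U}$ on $\Omega$, deduce realness of the eigenvalues from $c_\alpha<1$ and $Q_\alpha^x>0$, and obtain completeness from the explicit block-structured right eigenvectors (fluid blocks padded with zeros, plus the constant Maxwell block) listed in Appendix~\ref{ap:right_eig}. Your explicit remarks on the block-diagonal decoupling and on verifying independence of the eigenvectors for the triple eigenvalue $v_{x_\alpha}$ merely make precise what the paper leaves implicit.
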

	\subsection{Entropy}
	\label{subsec:entropy}
	A strictly convex function $\ent(\con)$ is said to be an entropy function if there exist smooth functions $\entf^d(\con)$, ($d \in \{x,y\}$) called entropy fluxes satisfying $(\entf^d(\con))^\prime$=$\ent^\prime(\con) (\mathbf{f}^d(\con))^\prime$. Following \cite{bhoriya_high-order_2022,Bhoriya2020}, we define the following entropy function $\ent_\alpha$ and associated entropy flux $\entf^d_\alpha$, 
	\begin{equation*}
		\mathcal{U}_\alpha=-\frac{\rho_\alpha \Gamma_\alpha s_\alpha}{\gamma_\alpha -1} \qquad \text{ and } \qquad \mathcal{F}^d_\alpha=-\frac{\rho_\alpha \Gamma_\alpha s_\alpha v_{d_\alpha}}{\gamma_\alpha -1}, \ \alpha \in \{i,e\}, 
	\end{equation*}
	where $s_\alpha = \ln(p_\alpha \rho_\alpha^{-\gamma_\alpha})$. The pair $(\mathcal{U}_\alpha,\mathcal{F}^d)$  is called an {\em entropy-entropy flux pair}.
    For the electromagnetic part, we consider quadratic electromagnetic energy as the entropy function
	\begin{equation*}
		\mathcal{U}_m=\frac{|\bm{B}|^2+|\phi|^2}{2} +\frac{|\bm{E}|^2 + |\psi|^2}{2} .
	\end{equation*}
    For simplicity, let us consider the $x-$directional case. Then we have the following result from \cite{bhoriya_high-order_2022}.
	\begin{proposition} \label{prop:entropy}
		The smooth solutions of \eqref{conservedform} satisfy the entropy equality, 
		\begin{equation*}
			\partial_t s_\alpha+v_{x_\alpha} \partial_x s_\alpha=0, \ \ \alpha \in \{i,e\}. 
		\end{equation*}
		As a consequence, for smooth functions $H(s_\alpha)$ of $s_\alpha$ we have
		\begin{equation}
			\partial_t (\rho_\alpha \Gamma_\alpha H(s_\alpha))+ \partial_x (\rho_\alpha \Gamma_\alpha v_{x_\alpha} H(s_\alpha))=0. \label{h_s}
		\end{equation} 
		In particular, we have the following entropy equality for the smooth solutions,
		\begin{equation}
			\partial_t \mathcal{U}_\alpha+ \partial_x \mathcal{F}^x_\alpha=0. \label{entropy_pair_equality}
		\end{equation}
	\end{proposition}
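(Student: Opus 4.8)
The plan is to first derive the scalar transport equation $\partial_t s_\alpha+v_{x_\alpha}\partial_x s_\alpha=0$ for each species, and then to obtain \eqref{h_s} and \eqref{entropy_pair_equality} from it by elementary manipulations with the continuity equation. Restricting to the $x$-direction (the $y$-direction is identical) and to a fixed $\alpha\in\{i,e\}$, I would first observe that \eqref{density}--\eqref{energy} is exactly the single-fluid relativistic hydrodynamics (RHD) system in the variables $(\rho_\alpha,\vel_\alpha,p_\alpha)$, augmented by the Lorentz source $\big(0,\ r_\alpha D_\alpha(\bm{E}+\vel_\alpha\times\bm{B}),\ r_\alpha D_\alpha\,\vel_\alpha\cdot\bm{E}\big)$, with $D_\alpha=\rho_\alpha\Gamma_\alpha$; the Maxwell block \eqref{max1}--\eqref{max4} and the other species never enter the evolution of $s_\alpha$. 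Hence it suffices to reproduce the RHD entropy computation of \cite{Bhoriya2020,bhoriya_high-order_2022} and to check that the source contributes nothing to it.

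To do this I would pass from \eqref{density}--\eqref{energy} to non-conservative (primitive-variable) form: use the continuity equation $\partial_t(\rho_\alpha\Gamma_\alpha)+\partial_x(\rho_\alpha\Gamma_\alpha v_{x_\alpha})=0$ to remove the time derivative of $\rho_\alpha\Gamma_\alpha$ from the momentum and energy equations, and then form the linear combination that isolates the pressure — equivalently, the internal-energy equation, i.e. the projection of the energy--momentum balance along the four-velocity $u_\alpha^\mu=\Gamma_\alpha(1,\vel_\alpha)$. In that combination the source enters only through
\[
\vel_\alpha\cdot\big(r_\alpha D_\alpha(\bm{E}+\vel_\alpha\times\bm{B})\big)-r_\alpha D_\alpha\,\vel_\alpha\cdot\bm{E}\;=\;r_\alpha D_\alpha\,\vel_\alpha\cdot(\vel_\alpha\times\bm{B})\;=\;0,
\]
since a vector is orthogonal to its own cross product with any other vector. (In covariant language this is the antisymmetry of the Faraday tensor, which makes the Lorentz four-force orthogonal to $u_\alpha^\mu$.) Therefore the pressure/density evolution is identical to the source-free RHD case, and combining it with the equation of state \eqref{EOS} and the continuity equation yields $\partial_t s_\alpha+v_{x_\alpha}\partial_x s_\alpha=0$.

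The remaining two statements are then routine. For smooth $H$, the chain rule gives $\partial_t H(s_\alpha)+v_{x_\alpha}\partial_x H(s_\alpha)=H'(s_\alpha)\big(\partial_t s_\alpha+v_{x_\alpha}\partial_x s_\alpha\big)=0$; multiplying the continuity equation by $H(s_\alpha)$, adding $\rho_\alpha\Gamma_\alpha$ times this transport identity, and regrouping with the product rule gives \eqref{h_s}. Finally, \eqref{entropy_pair_equality} is the special case $H(s_\alpha)=-s_\alpha/(\gamma_\alpha-1)$, for which $\rho_\alpha\Gamma_\alpha H(s_\alpha)=\mathcal{U}_\alpha$ and $\rho_\alpha\Gamma_\alpha v_{x_\alpha}H(s_\alpha)=\mathcal{F}^x_\alpha$.

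The one genuinely technical point is the extraction of the internal-energy equation in the second paragraph: because $\bm{M}_\alpha$ and $\mathcal{E}_\alpha$ carry the nonlinear factor $h_\alpha\Gamma_\alpha^2$, the algebra is heavier than in the Newtonian case, and the computation must be organized so that the source appears exactly as the single inner product displayed above, which then vanishes identically. The cleanest route is the covariant contraction with $u_\alpha^\mu$ sketched above; the full $3{+}1$ manipulation is carried out in \cite{Bhoriya2020,bhoriya_high-order_2022} and can simply be quoted for the fluid block.
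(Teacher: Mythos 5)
Your proposal is correct and follows essentially the same route as the paper, which itself states this result as imported from the cited RHD/finite-difference works and records the key two-fluid ingredient separately (Remark~\ref{remark:ent_dot_source}): the Lorentz source drops out of the entropy balance because $\vel_\alpha\cdot\big(r_\alpha D_\alpha(\bm{E}+\vel_\alpha\times\bm{B})\big)-r_\alpha D_\alpha\,\vel_\alpha\cdot\bm{E}=r_\alpha D_\alpha\,\vel_\alpha\cdot(\vel_\alpha\times\bm{B})=0$, after which the fluid block is source-free RHD and the adiabatic transport of $s_\alpha$, the $H(s_\alpha)$ identity via the continuity equation, and the specialization $H(s_\alpha)=-s_\alpha/(\gamma_\alpha-1)$ go through exactly as you describe. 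Your identification of the covariant contraction with $u_\alpha^\mu$ as the clean way to organize the internal-energy equation is precisely the mechanism underlying the cited proof, so there is nothing to add.
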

	
	\begin{remark}
		The entropy equality~\eqref{entropy_pair_equality} in Proposition~\eqref{prop:entropy} is replaced with the entropy inequality
		\begin{equation}
			\partial_t \mathcal{U}_\alpha + \partial_x \mathcal{F}^x_\alpha \le 0 \label{ent_inq}
		\end{equation}   
		for non-smooth solutions. 
	\end{remark}
	\begin{remark} From the proof of the above result in \cite{bhoriya_high-order_2022}, we also note that the entropy evolution is not affected by the source terms. As a corollary, we have $\evar^\top \sour = 0$ where $\evar=\frac{\partial \ent}{\partial \con}$ is the entropy variable vector for the system~\eqref{eq:TFRP_sys}. 
        \label{remark:ent_dot_source}
	\end{remark}
	We now aim to design DG schemes that satisfy the estimate \eqref{ent_inq} at the discrete level which will be called an entropy stable scheme.
	\section{Entropy stable discontinuous Galerkin  schemes} \label{sec:DG_SCHEME}

	This section proposes entropy stable DG schemes for the two-fluid relativistic plasma in one and two dimensions. To simplify the discussion, we first consider the one-dimensional case.
	
	\subsection{One-dimensional schemes}\label{sec:1d_schemes}
	
	We consider the one-dimensional case of the hyperbolic system~\eqref{eq:TFRP_sys}, given by,
	\begin{equation}
		\frac{\partial \mathbf{U}}{\partial t}+\frac{\partial \mathbf{f}^x}{\partial x} = \sour \label{conservedform_1d}.
	\end{equation}
     The computational domain is taken to be $I^x=(x_{min},x_{max})$ which is discretized using $(N_x+1)$ points 
	\begin{equation*}
		x_{min} = x_{\half} < x_{\frac{3}{2}} < \dots < x_{N_x+\half} = x_{max},
		\quad
		N_x\in \mathbb{N}. 
	\end{equation*}
	We split the domain $I^x$ into computational cells $\left[x_{i-\half}, x_{i+\half}\right],$ $i \in \{1,2,\dots,N_x\},$ denoted by $I^x_i$ with the spatial step size $\dx_i$, given by $\dx_i = x_{i+\half}-x_{i-\half}$. For an interval $I^x_i$, we adopt the notation $\mathcal{P}^k(I_i^x)$ for the space of one-dimensional polynomials of degree at most $k$ on $I_i^x$. We now define the discrete finite element space as,
	\begin{equation}
		\Vkh = \{ \mathbf{U}_h : \mathbf{U}_h |_{I^x_i} \in [\mathcal{P}^k(I^x_i)]^{18}, \ 1 \le i \le N_x\},
		\label{element_space}
	\end{equation}
	where the subscript $h$ denotes the discrete space. In DG schemes, we find a solution, $\Uh \in \Vkh$, of the hyperbolic system~\eqref{conservedform_1d}, which satisfies,
	\begin{align}
		\int\limits_{I^x_i} \df{\Uh^\top}{t} \ph \intdx
		-
		\int\limits_{I^x_i} \mathbf{f}^x(\Uh)^\top \df{\ph}{x} \intdx
		= 
		&- 
		\left(
		\hat{\mathbf{f}}_{\iph}^\top \ph(x^-_{\iph})
		-
		\hat{\mathbf{f}}_{\imh}^\top \ph(x^+_{\imh})
		\right)
		\nonumber
		\\
		+
		\int\limits_{I^x_i} \sour^\top(\Uh) \ph \intdx
		&
		\qquad 
		\forall \ph \in \Vkh, 1 \le i \le N_x, 
		\label{dg_weak}
	\end{align}
	where,
	$
	\hat{\af}_{\iph} := \hat{\af} \left( \Uh(x^-_{\iph}) , \Uh(x^+_{\iph})\right)
	$
	is an interface flux. A simple application of the integration by parts on the weak form~\eqref{dg_weak} leads to the strong form
	\begin{align}
		\small
		\int\limits_{I^x_i}
		\left(
		\df{\Uh}{t} + \df{\af^x(\Uh)}{x}
		\right)^\top
		\ph \intdx
		=
		& -
		\Bigg[
		\left(\hat{\mathbf{f}}_{\iph} - \mathbf{f}^x(\Uh (x^-_{\iph})) \right)^\top \ph(x^-_{\iph})
		\nonumber
		\\
		& \qquad
		-
		\left(\hat{\mathbf{f}}_{\imh} - \mathbf{f}^x(\Uh (x^+_{\imh})) \right)^\top \ph(x^+_{\imh})
		\Bigg]
		\nonumber
		\\
		& +
		\int\limits_{I^x_i} \sour^\top(\Uh) \ph \intdx
		\qquad \forall \ph \in \Vkh, 1 \le i \le N_x.
		\label{dg_strong}
	\end{align}
	To further simplify the discussion, we first consider the case of scalar conservation law with source term and the scheme for each component of the conservation law looks like that of a scalar problem. The integrals in the weak formulation~\eqref{dg_weak} are approximated using the Legendre–Gauss–Lobatto quadrature rule~\cite{MiltoAbramowitzStegun1972} over the reference interval $I=[-1,1]$. The $k+1$ quadrature points in the interval $[-1,1]$ are denoted by
	\[
	-1 = \xi_0 < \xi_1 < \cdots < \xi_k = 1.
	\]
	The corresponding weights are denoted by the set $\omega=$$\{\omega_0, \omega_1,\dots,\omega_k\}$;  Table~\eqref{table:lobatto} shows the quadrature points and corresponding weights for different values of $k$~\cite{MiltoAbramowitzStegun1972}.
	\begin{table}[h]
		\centering
		\begin{tabular}{ |c|c|c| } 
			\hline
			$k$ & Quadrature points ($\xi_j$) & Weights ($\omega_j)$ \\ \hline
			\multirow{2}{*}{1} & $-1$ & $1$ \\
			& $\ \ 1$ & $1$ \\ \hline
			\multirow{3}{*}{2} & $-1$ & $1/3$ \\
			&$\ \ 0$ & $4/3$ \\
			&$\ \ 1$ & $1/3$ \\ \hline
			\multirow{4}{*}{3} & $-1$ 			& $1/6$ \\
			&$-1/\sqrt{5}$  & $5/6$ \\
			&$\ \ 1/\sqrt{5}$ & $5/6$ \\ 
			&$\ \ 1$ 			& $1/6$ \\
			\hline
		\end{tabular}
		\caption[h]{Gauss-Lobatto nodes and weights for $k=1,2,3$.}
		\label{table:lobatto}
	\end{table}
	Now, using the change of variables from cell $I^x_i$ to the  reference cell $I$, 
	\begin{equation*}
		x_i(\xi) = \dfrac{x_{\imh} + x_{\iph}}{2} + \dfrac{\dx_i}{2} \xi,
	\end{equation*}
	the formulation~\eqref{dg_weak}, for the scalar conservation law with source, can be written in an equivalent form as,
	\begin{align}
		\dfrac{\dx_i}{2}\int\limits_{I} \df{{U}_h^i}{t} \phi_h \mathrm{d} \xi
		-
		\int\limits_{I} {f}^x({U}_h^i) \df{\phi_h}{\xi} \mathrm{d} \xi
		=& - \left(
		\hat{{f}}_{\iph} \phi_h(x_i(1))
		-
		\hat{{f}}_{\imh} \phi_h(x_i(-1))
		\right)
		\nonumber
		\\
		&+
		\dfrac{\dx_i}{2}\int\limits_{I} {\nsour}({U}_h^i) \phi_h \mathrm{d} \xi.
		\label{scalar_form}
	\end{align}
	Let us now consider the following Lagrangian nodal basis set $\mathcal{B}$ for the space $\Vkh$,
	\begin{equation*}
		\mathcal{B}=\{L_m(\xi)\}_{m = 0}^{k} \quad 
		\text{ with }
		\quad
		L_m(\xi) = \prod_{j = 0, j \neq m }^k \dfrac{\xi - \xi_j}{\xi_m - \xi_j}, 
	\end{equation*}
	on the reference cell $I$.  For each $j,m \in \{0,1,2,\dots,k\}$, we have the interpolation property of Lagrange polynomials
	\begin{equation}
		L_m(\xi_j) = \delta_{jm} = \begin{cases}
			1 & \text{if } j = m,  
			\\
			0 & \text{otherwise}. 
		\end{cases}	\label{basis}
	\end{equation}
 The solution in each element is approximated in terms of the Lagrange polynomials as basis functions
 \[
			U^i_h(x(\xi)) = \sum_{j=0}^{k} U_j^i L_j(\xi), \qquad U_j^i = U_h(x_i(\xi_j))
\] 
where the coefficients $U_j^i$ are the values of the solution $U_h$ at the nodes inside the cell. Furthermore, the following approximations are used for the integrands in integral equation~\eqref{scalar_form}
	\begin{equation}
		\begin{rcases}
			{f}^x(x(\xi)) =  \sum_{j=0}^{k} {f}^x(U^i_j)   L_j(\xi),
			\\
			\nsour(x(\xi)) = \sum_{j=0}^{k} \nsour(U^i_j) L_j(\xi),
		\end{rcases}.
	\end{equation} 
	which consist of interpolation of flux and source using the same nodal values as the solution. We can now replace the continuous inner product with the discrete inner product of  $f$ and  $g$ on the space $\Vkh$, defined as,
	\begin{equation}
		\langle f, g \rangle_\omega := \sum_{j = 0}^{k} \omega_j f(\xi_j) g(\xi_j).
		\label{ip}
	\end{equation}
	To obtain compact expressions for the scheme, we introduce the following notations:
	\begin{itemize}
		\item The difference matrix $\Diff =\{D_{jm}\}_{0 \le j,m \le k}$ with $D_{jm}=L^\prime_m(\xi_j)$. 
		\item The mass matrix $\Mass =\{M_{jm}\}_{0 \le j,m \le k}$ with $M_{jm}= \langle L_j, L_m \rangle_\omega$.  
		\item The stiffness matrix $\Stiff =\{S_{jm}\}_{0 \le j,m \le k}$ with $S_{jm}= \langle L_j, L_m^\prime \rangle_\omega$.  
		\item The boundary matrix $\Bdry =\text{diag}[\tau_0, \tau_1,\dots, \tau_k]$ where $\tau_{j}=  
		\begin{cases}
			-1	&	\text{if } j=0\\
			1	&	\text{if } j=k\\
			0	&	\text{otherwise}
		\end{cases}
		$  
	\end{itemize}
	Using the interpolation property of Lagrange polynomials, we obtain $M_{jm} = \omega_j \delta_{jm}$, i.e., the mass matrix is diagonal. Moreover, the matrices satisfy the discrete analog of the integration by parts, known as Summation By Parts (SBP) properties~\cite{Chen2017,Carpenter2014}, given by
	\begin{align}
		\Stiff=\Mass \Diff, \qquad \Mass \Diff + \Diff^\top \Mass = \Stiff + \Stiff^\top = \Bdry. \label{SBP}
	\end{align}
	As a result, we have the following row sum and column sum property~\cite{Chen2017} for the mass matrix $\Mass$ and the stiffness matrix $\Stiff$
	\begin{enumerate}
		\item Row sum property: For $0 \le j \le k$
		\begin{align}
			\begin{rcases}
				\sum_{m=0}^{k} D_{jm} = 0 \\
				\sum_{m=0}^{k} S_{jm} = 0
			\end{rcases}.
			\label{row_sum}
		\end{align}
		\item Column sum property: For $0 \le m \le k$
		\begin{align}
			\sum_{j=0}^{k} S_{jm} = \tau_m.
			\label{column_sum}
		\end{align}
	\end{enumerate}
	Now choosing $\phi_h = L_j, \ 0 \le j \le k,$ as the test functions, we write the integral Eqn.~\eqref{scalar_form} as,
	\begin{align}
		\dfrac{\dx_i}{2} \Mass \frac{d\vec{U^i}}{dt} 
		-
		\Stiff^\top {\vec{f}}^{x^i}
		= - \Bdry  \vec{f}^{x^i}_* 
		+
		\dfrac{\dx_i}{2} \Mass \vec{\nsour^i} 
		\label{compact_weak_form}
	\end{align}
	where, we have used the following vector notations
	\begin{align}
		\begin{rcases}
			\vec{U^i} &= [U^i_0,U^i_1,\dots,U^i_k]^\top = \left[U_h(x_i(\xi_0)), U_h(x_i(\xi_1)), \dots, U_h(x_i(\xi_k))\right]^\top,
			\\
			\vec{f}^{x^i} &= [f^{x^i}_0,f^{x^i}_1,\dots,f^{x^i}_k]^\top = \left[f^x(U^i_0), f^x(U^i_1), \dots, f^x(U^i_k)\right]^\top,
			\\
			\vec{f}^{x^i}_* &= [f^{x^i}_{*,0},f^{x^i}_{*,1},\dots,f^{x^i}_{*,k}]^\top = \left[\hat{f}_{\imh}, 0, \dots,0, \hat{f}_{\iph}\right]^\top,
			\\
			\vec{\nsour^i} &= [\nsour^i_0, \nsour^i_1, \dots, \nsour^i_k]^\top = \left[\nsour(U^i_0), \nsour(U^i_1), \dots, \nsour(U^i_k)\right]^\top,
		\end{rcases}.
		\label{vector_notations}
	\end{align}
	The discretized weak form, which is given by Eqn.~\eqref{compact_weak_form}, can be further simplified using the SBP properties~\eqref{SBP} to give the following form
	\begin{align}
		\dfrac{\dx_i}{2} \frac{d\vec{U^i}}{dt} 
		+
		\Diff {\vec{f}}^{x^i}
		=  \Mass^{-1} \Bdry  (\vec{f}^{x^i} -\vec{f}^{x^i}_* )
		+
		\dfrac{\dx_i}{2}  \vec{\nsour^i} 
		\label{compact_strong_form}
	\end{align}
 Suppressing the cell index $i$, we can write the above set of equations in system form as
	\begin{align}
		\dfrac{\dx}{2} \frac{d{\con_j}}{dt} 
		+
		\sum_{m=0}^{k} D_{jm} {\mathbf{f}}^{x}_m
		=  \frac{\tau_j}{\omega_j} ({\mathbf{f}}^{x}_j - {\mathbf{f}}^{x}_{*,j} )
		+
		\dfrac{\dx}{2}  {\sour_j}, \qquad 0 \le j \le k. 
		\label{element_system}
	\end{align}
	where $\con_j$ is a vector of length 18 at each node $j$. For the system \eqref{conservedform_1d}, for the vectors $\con_i$, $\con_e$ and $\con_m$, scheme~\eqref{element_system} is,
	\begin{align}
		\dfrac{\dx}{2} \frac{d{\con_{\alpha,j}}}{dt} 
		+
		\sum_{m=0}^{k} D_{jm} {\mathbf{f}}^{x}_{\alpha,j}
		=  \frac{\tau_j}{\omega_j}  ({\mathbf{f}}^{x}_{\alpha,j} - {\mathbf{f}}^{x}_{\alpha,*,j} )
		+
		\dfrac{\dx}{2}  {\sour_{\alpha,m}},
		\nonumber
		\\
		\qquad \alpha\in\{i,e,m\},
		 \ 0 \le j \le k. 
		\label{element_fluid}
	\end{align}
	In general, the solutions of the system~\eqref{conservedform_1d} obtained using the scheme~\eqref{element_fluid} may not provably satisfy the entropy condition at the discrete level.  To ensure satisfaction of entropy condition, we have to design suitable numerical fluxes in the scheme~\eqref{element_fluid}; we first design the entropy conservative schemes, followed by the entropy stable schemes. 
	
	We proceed by introducing the entropy variables, denoted by $\bm{\mathcal{V}}_\alpha(\mathbf{U}_\alpha)$, and entropy potentials, denoted by $\psi_\alpha^x(\con_\alpha)$.
	\begin{align}
		\evar_\alpha(\con_\alpha)
		=
		\df{\ent_\alpha}{{\con_\alpha}},  
		\quad 
		\psi_\alpha^x(\con_\alpha)=\evar_\alpha^\top(\con_\alpha) \cdot \mathbf{f}_\alpha^x(\con_\alpha)-\entf_\alpha^d(\con_\alpha), \qquad \alpha \in \{i,e,m\}.
		\label{entropy_defn_general}
	\end{align}
	Using the entropy variables $\evar=$$(\bm{\mathcal{V}}_i,\bm{\mathcal{V}}_e,\bm{\mathcal{V}}_m)^\top$ and entropy potentials $\psi^x = (\psi^x_i, \psi^x_e, \psi^x_m)^\top$, we introduce the following definitions for two point numerical fluxes which form an essential ingredient of entropy consistent schemes to be developed next.
	\begin{definition}
		Let subscripts $L$ and $R$ denote two admissible states. We say a numerical flux $\mathbf{F}^x(\con_L,\con_R)$ is consistent, if
		\begin{align}
			\mathbf{F}^x(\con,\con)=\mathbf{f}^x(\con) \qquad \forall \con
            \label{def_consistent_flux}
		\end{align}
  and symmetric if
  \[
				\mathbf{F}^x(\con_L,\con_R) =  \mathbf{F}^x(\con_R,\con_L), \qquad \forall \con_L, \con_R
    \]
	\end{definition}
	\begin{definition}
		\label{def:EC_ES}
		Let subscripts $L$ and $R$ denote any two admissible states. We say a consistent numerical flux $\mathbf{F}^x(\con_L,\con_R)$ is
		\begin{itemize}
			\item Entropy conservative if it is symmetric and 
			\begin{align}
				( \evar_R - \evar_L)^\top
				\cdot
				\mathbf{F}^x(\con_L,\con_R)
				=
				\psi^x_R - \psi^x_L.
				\label{def_EC}
			\end{align}
			\item Entropy stable if
			\begin{align}
				( \evar_R - \evar_L)^\top
				\cdot
				\mathbf{F}^x(\con_L,\con_R)
				\le
				\psi^x_R - \psi^x_L.
				\label{def_ES}
			\end{align}
		\end{itemize} 
	\end{definition}
 
	\subsubsection{Entropy conservative schemes}
	We adopt the approach given in~\cite{Chen2017} and use a symmetric entropy conservative flux, denoted by $\mathbf{F}^x_{EC} = (\mathbf{F}^x_{i,EC},  \mathbf{F}^x_{e,EC},  \mathbf{F}^x_{m,EC})^\top$, in the scheme~\eqref{element_fluid} to obtain the modified scheme as 
	\begin{align}
		\dfrac{\dx}{2} \frac{d{\con_{j}}}{dt} 
		+
		2 \sum_{m=0}^{k} D_{jm} {\mathbf{F}_{EC}^x} (\con_j,\con_m)
		= \frac{\tau_j}{\omega_j} ({\mathbf{f}}^{x}_{j} - {\mathbf{f}}^{x}_{*,j} )
		+
		\dfrac{\dx}{2}  {\sour_{j}}, \qquad  \ 0 \le j \le k. 
		\label{scheme_1D}
	\end{align}
	For the one-dimensional case, we note that the discrete integrals of $\con$ and $\ent$ in the cell $\Tilde{I}$ of size $\dx$ are given by
	\begin{equation}
		\int_{\tilde{I}} \mathbf{U} \mathrm{d}x \approx \frac{\dx}{2} \sum_{j = 0}^k \omega_j \mathbf{U}_j  
		\qquad
		\text{ and }
		\qquad 
		\int_{\Tilde{I}} \mathbf{\mathcal{U}} \mathrm{d}x \approx \frac{\dx}{2} \sum_{j = 0}^k \omega_j \mathbf{\mathcal{U}}_j. 
		\label{int_1D}
	\end{equation}
	\begin{theorem}
		If a smooth numerical flux ${\mathbf{F}_{EC}^{x}} (\con_L,\con_R)$ is consistent, symmetric and entropy conservative in the sense of Definition~\ref{def:EC_ES}, then the scheme~\eqref{scheme_1D} is conservative, $k^{th}$ order accurate and entropy conservative in the discrete sense. 
		\label{thm:conservative}
	\end{theorem}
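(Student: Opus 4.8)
### Proof Proposal

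The plan is to verify the three asserted properties of the scheme~\eqref{scheme_1D} separately: discrete conservation, $k$-th order accuracy, and discrete entropy conservation. The first two are essentially structural and will follow from the SBP machinery already assembled; the entropy statement is the substantive claim and requires pairing the scheme with the entropy variables.

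First I would establish \textbf{discrete conservation}. Sum~\eqref{scheme_1D} over $j=0,\dots,k$ with weights $\omega_j$, i.e.\ contract with the vector $(\omega_0,\dots,\omega_k)$. On the left, the time-derivative term gives $\tfrac{\dx}{2}\tfrac{d}{dt}\sum_j \omega_j \con_j$, which by~\eqref{int_1D} is the rate of change of the cell average. The flux-difference term involves $\sum_{j,m}\omega_j D_{jm}\mathbf{F}^x_{EC}(\con_j,\con_m)$; here I use $\omega_j D_{jm}=S_{jm}$ together with the column-sum property~\eqref{column_sum}, $\sum_j S_{jm}=\tau_m$, and the symmetry $\mathbf{F}^x_{EC}(\con_j,\con_m)=\mathbf{F}^x_{EC}(\con_m,\con_j)$. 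Swapping the names of the summation indices $j$ and $m$ shows the double sum collapses: the interior contributions cancel in pairs and only the boundary nodes survive, leaving $2\big(\mathbf{F}^x_{EC}(\con_k,\cdot)\text{-type boundary terms}\big)$ which, by consistency~\eqref{def_consistent_flux}, reproduce $\mathbf{f}^x_k-\mathbf{f}^x_0$ and exactly cancel the $\tfrac{\tau_j}{\omega_j}\mathbf{f}^x_j$ part of the right-hand side. What remains is the telescoping interface-flux term $\hat{\mathbf{f}}_\iph-\hat{\mathbf{f}}_\imh$ plus the source integral, which is the finite-volume conservation statement.

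Next, \textbf{accuracy}: I would argue that replacing $\sum_m D_{jm}\mathbf{f}^x_m$ in~\eqref{element_fluid} by $2\sum_m D_{jm}\mathbf{F}^x_{EC}(\con_j,\con_m)$ in~\eqref{scheme_1D} is a consistent modification of order $k$. The key identity is that, because $\mathbf{F}^x_{EC}$ is symmetric and consistent and the rows of $\Diff$ sum to zero (row-sum property~\eqref{row_sum}), one has $2\sum_m D_{jm}\mathbf{F}^x_{EC}(\con_j,\con_m) = \sum_m D_{jm}\mathbf{f}^x_m + \big(\text{terms that are }O(\dx^{k})\text{ on smooth data}\big)$; this is the standard Tadmor-type argument (a symmetric two-point flux differs from the arithmetic mean of its arguments' fluxes by a quadratic-in-jumps correction, and $\Diff$ differentiates the polynomial interpolant exactly up to degree $k$). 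Combined with the fact that the Gauss--Lobatto quadrature and the collocated DG projection are themselves $k$-th order accurate, the consistency error of~\eqref{scheme_1D} is $O(\dx^k)$ on smooth solutions. I would keep this step brief, citing~\cite{Chen2017,Carpenter2014}.

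The main obstacle is the \textbf{discrete entropy conservation}, which I expect to be the bulk of the proof. The plan is: contract~\eqref{scheme_1D} at node $j$ with the entropy variable $\evar_j:=\evar(\con_j)$, then sum over $j$ with weights $\omega_j$. The time term yields $\tfrac{\dx}{2}\sum_j\omega_j\,\evar_j^\top\tfrac{d\con_j}{dt}=\tfrac{\dx}{2}\tfrac{d}{dt}\sum_j\omega_j\,\ent_j$, the rate of change of the discrete cell entropy. By Remark~\ref{remark:ent_dot_source} the source contribution $\evar_j^\top\sour_j$ vanishes pointwise, so the source drops out entirely. The crux is the volume term $2\sum_{j}\omega_j\sum_m D_{jm}\,\evar_j^\top\mathbf{F}^x_{EC}(\con_j,\con_m)=2\sum_{j,m}S_{jm}\,\evar_j^\top\mathbf{F}^x_{EC}(\con_j,\con_m)$: using the SBP relation $S_{jm}=-S_{mj}+\tau_m\delta_{jm}$ (equivalently $\Stiff+\Stiff^\top=\Bdry$) to symmetrize, one rewrites this as $\sum_{j,m}S_{jm}\,(\evar_j-\evar_m)^\top\mathbf{F}^x_{EC}(\con_j,\con_m)$ plus a boundary piece; then the entropy-conservation identity~\eqref{def_EC} replaces $(\evar_j-\evar_m)^\top\mathbf{F}^x_{EC}(\con_j,\con_m)$ by $\psi^x_j-\psi^x_m$, and this difference telescopes against $S_{jm}$ via the row-/column-sum properties, leaving only boundary-node values of the entropy potential $\psi^x$ and of $\evar^\top\mathbf{f}^x$. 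Finally, the boundary terms combine with the $\tfrac{\tau_j}{\omega_j}(\mathbf{f}^x_j-\mathbf{f}^x_{*,j})$ right-hand side contracted with $\evar_j$: at $j=0$ and $j=k$ one gets exactly $\evar^\top\hat{\mathbf{f}}-\psi^x$ evaluated at the interfaces, which is the numerical entropy flux $\widehat{\entf}^x$. Collecting everything gives the discrete cell entropy balance $\tfrac{\dx}{2}\tfrac{d}{dt}\sum_j\omega_j\ent_j + \widehat{\entf}^x_\iph - \widehat{\entf}^x_\imh = 0$, i.e.\ entropy conservation. The one genuinely delicate bookkeeping point is that all of this must be done for the full $18$-component system; here I would invoke the block structure — $\mathbf{F}^x_{EC}$ acts independently on the ion, electron, and Maxwell blocks, each of which is entropy conservative for its own entropy pair $(\ent_\alpha,\entf^x_\alpha)$ by construction (the ion and electron blocks reuse the RHD entropy-conservative flux of~\cite{Bhoriya2020,bhoriya_high-order_2022}, the Maxwell block is linear with quadratic entropy $\ent_m$) — so the computation reduces to three decoupled instances of the scalar-system argument, and $\ent=\ent_i+\ent_e+\ent_m$, $\psi^x=\psi^x_i+\psi^x_e+\psi^x_m$ add.
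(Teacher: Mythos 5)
Your proposal is correct and follows essentially the same route as the paper: the entropy-conservation argument (contract with $\evar_j$, use $\Stiff=\Mass\Diff$ and $\Stiff+\Stiff^\top=\Bdry$ to symmetrize, invoke \eqref{def_EC} to replace $(\evar_j-\evar_m)^\top\mathbf{F}^x_{EC}$ by $\psi^x_j-\psi^x_m$, telescope via the row-/column-sum properties, and identify the resulting boundary terms as a consistent numerical entropy flux) is exactly the computation given in the paper, with the source term killed by Remark~\ref{remark:ent_dot_source}. The only difference is that you spell out the conservation and accuracy arguments that the paper delegates to \cite{Chen2017}, and you make explicit the block decomposition into ion, electron and Maxwell entropy pairs, which the paper leaves implicit; both additions are consistent with the paper's treatment.
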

	\begin{proof}
		The proof of conservation and accuracy follows from \cite{Chen2017}. Here, we present the proof of entropy conservation.

		\paragraph{Entropy Conservation:} We show that the time rate of change of the discrete integral of the entropy variable in a domain $I^x$ is equal to zero.

		\begin{align*}
			\df{}{t}
			\left(
			\sum_{j = 0}^k \dfrac{\dx}{2} \omega_j \mathcal{U}_j
			\right)
			& =
			\sum_{j = 0}^k \dfrac{\dx}{2} \omega_j \evar_j^\top \frac{d\con_j}{dt} 
			\\
			& =
			-2 \sum_{j = 0}^k \sum_{m=0}^{k} S_{jm} \evar_j^\top{\mathbf{F}_{EC}^{x}} (\con_j,\con_m)
			+ \sum_{j = 0}^k {\tau_j} \evar_j^\top ({\mathbf{f}}^{x}_{j} - {\mathbf{f}}^{x}_{*,j} )
			\\
			& \qquad
			+	\sum_{j = 0}^k \dfrac{\dx}{2} \omega_j \evar_j^\top \sour_j 
			\\
			& =
			- \sum_{j = 0}^k \sum_{m=0}^{k} (S_{jm} + S_{jm}) \evar_j^\top{\mathbf{F}_{EC}^{x}} (\con_j,\con_m)
			+ \sum_{j = 0}^k {\tau_j} \evar_j^\top ({\mathbf{f}}^{x}_{j} - {\mathbf{f}}^{x}_{*,j} )\tag*{(Using $\evar_j^\top\sour_{j}=0$, remark~\ref{remark:ent_dot_source})}
			\\
			& =
			- \sum_{j = 0}^k \sum_{m=0}^{k} ( B_{jm} - S_{mj} + S_{jm}) \evar_j^\top{\mathbf{F}_{EC}^{x}} (\con_j,\con_m)
			\\
			& \qquad
			+ \sum_{j = 0}^k {\tau_j} \evar_j^\top ({\mathbf{f}}^{x}_{j} - {\mathbf{f}}^{x}_{*,j} )
			\tag*{(Using SBP property~\eqref{SBP})}
			\\
			\\
			& =
			- \sum_{j = 0}^k  \tau_j \evar_j^\top \mathbf{f}_j^x 
			- \sum_{j = 0}^k \sum_{m=0}^{k} S_{jm} (\evar_j - \evar_m)^\top {\mathbf{F}_{EC}^{x}} (\con_j,\con_m)
			\\
			& \qquad
			+ \sum_{j = 0}^k {\tau_j} \evar_j^\top ({\mathbf{f}}^{x}_{j} - {\mathbf{f}}^{x}_{*,j} )
			\\
			& =
			- \sum_{j = 0}^k \sum_{m=0}^{k} S_{jm} (\psi_j - \psi_m) 
			- \sum_{j = 0}^k {\tau_j} \evar_j^\top {\mathbf{f}}^{x}_{*,j} 
			\tag*{(Since $\mathbf{F}^x_{EC}$ is entropy conservative)}
			\\
			& =
			\sum_{j = 0}^k  \tau_j \psi_j 
			- \sum_{j = 0}^k {\tau_j} \evar_j^\top {\mathbf{f}}^{x}_{*,j} 
			\tag*{(Using Eqn.~\eqref{column_sum})}
			\\
			& =
			( \psi_k - \evar_k^\top {\mathbf{f}}^{x}_{*,k})
			-
			( \psi_0 - \evar_0^\top {\mathbf{f}}^{x}_{*,0}).
		\end{align*}
  Putting back the cell index $i$, we have shown that
\begin{align*}
  \frac{d}{dt}\int_{I^x_i} \mathcal{U} d x = & (\psi_{i+1/2}^- - (\evar_{i+1/2}^-)^\top \mathbf{F}^x_{EC}(\con_{i+1/2}^-, \con_{i+1/2}^+))
  \\
  & - (\psi_{i-1/2}^+ 
  - (\evar_{i-1/2}^+)^\top \mathbf{F}^x_{EC}(\con_{i-1/2}^-, \con_{i-1/2}^+))
\end{align*}
  If the flux $\mathbf{F}_{EC}$ satisfies the entropy conservation property \eqref{def_EC}, then we can rewrite the above equation as
  \[
  \frac{d}{dt}\int_{I^x_i} \mathcal{U} d x = -\mathcal{F}^x_{i+1/2} + \mathcal{F}^x_{i-1/2}
  \]
  where
  \[
  \mathcal{F}^x_{i+1/2} = \frac{1}{2}( \evar_{i+1/2}^- + \evar_{i+1/2}^+)^\top \mathbf{F}^x_{EC}(\con_{i+1/2}^-, \con_{i+1/2}^+) - \frac{1}{2}( \psi_{i+1/2}^- + \psi_{i+1/2}^+)
  \]
  is a consistent entropy flux. Hence, the scheme~\eqref{scheme_1D} is locally entropy conservative. The local entropy equations can be summed over all the elements to obtain global entropy conservation.  Therefore, the change in total entropy in the whole domain depends on the entropy fluxes at the boundaries. 
\end{proof}

	\subsubsection{Entropy conservative flux for the fluid components}
	To derive entropy conservative fluxes $\mathbf{F}^x_{i,EC}$ and $\mathbf{F}^x_{e,EC}$, satisfying Eqn.~\eqref{def_EC}, we follow \cite{bhoriya_high-order_2022,Bhoriya2020}. The expressions for {\em entropy variables} $\evar_i$ and $\evar_e$ are given by, 
	\begin{align*}
		\evar_\alpha=\begin{pmatrix}
			\dfrac{\gamma_\alpha - s_\alpha}{\gamma_\alpha - 1} +{\beta_\alpha} \\
			{v_{\alpha_x} \Gamma_\alpha \beta_\alpha }                        \\
			{v_{\alpha_y} \Gamma_\alpha \beta_\alpha }                        \\
			{v_{\alpha_z} \Gamma_\alpha \beta_\alpha }                        \\
			-{\Gamma_\alpha \beta_\alpha}
		\end{pmatrix}, \quad \beta_\alpha=\frac{\rho_\alpha}{p_\alpha}
		\qquad \text{ and } \qquad
		\psi_\alpha^x=\rho_\alpha \Gamma_\alpha v_{\alpha_x}.
	\end{align*}
	%
	Now we introduce the notations $[\![a]\!]$ and $\bar{a}$ for the jump and average of any variable $a$
	\begin{align*}
		[\![a]\!]_{\iph,j}=a_{i+1,j}-a_{i,j},  \qquad \bar{a}_{\iph,j}=\frac{1}{2}(a_{i+1,j}+a_{i,j}),
		\\
		[\![a]\!]_{i,\jph}=a_{i,j+1}-a_{i,j}, \qquad \bar{a}_{i,\jph}=\frac{1}{2}(a_{i,j+1}+a_{i,j}).
	\end{align*}
	For a strictly positive quantity $a$, we follow~\cite{Ismail2009} to introduce the logarithmic average of $a$ as
	$$
	a^{\ln}=\frac{[\![a]\!]}{[\![\log a]\!]}.
	$$ 
	This logarithmic average definition may not be numerically well-posed whenever the left and right states approach each other; a stable approximation given in~\cite{Ismail2009} will be used in the computations.
	
	
	For $\alpha \in \{i,e\}$, Eqn.~\eqref{def_EC} is a single algebraic equation in five unknown variables, 
	$\mathbf{F}^x_{\alpha,EC}$ = $[F^x_{\alpha,EC,1}$, $F^x_{\alpha,EC,2}$, $F^x_{\alpha,EC,3}$, $F^x_{\alpha,EC,4}$, $ F^x_{\alpha,EC,5}]^{\top}$. There can be many fluxes which satisfy this equation, but all of them will lead to entropy conservative schemes; in this work we use the expressions given in~\cite{bhoriya_high-order_2022}, which can be summarized as follows:

	\begin{align}
		\begin{rcases}
			F_{\alpha,EC,1}^x & = \rho_\alpha^{\ln} \avg{M}_{x_\alpha} 
			\\ 
			F_{\alpha,EC,2}^x & = \frac{1}{\avg{\beta_\alpha}} \left( \frac{\avg{\beta_\alpha} \avg{M}_{x_\alpha}}{\avg{\Gamma}_\alpha} F_{\alpha,EC,5}^x + \avg{\rho_\alpha} \right)  
			\\
			F_{\alpha,EC,3}^x & = \frac{\avg{M}_{y_\alpha}}{\avg{\Gamma}_\alpha} F_{\alpha,EC,5}^x
			\\
			F_{\alpha,EC,4}^x & = \frac{\avg{M}_{z_\alpha}}{\avg{\Gamma}_\alpha} F_{\alpha,EC,5}^x
			\\
			F_{\alpha,EC,5}^x & = \frac{
				-\avg{\Gamma}_\alpha{\big( L_{\beta_\alpha} {\rho_\alpha^\ln}\avg{M}_{x_\alpha} +\frac{\avg{M}_{x_\alpha}\avg{\rho_\alpha}}{\avg{\beta_\alpha}}}\big)
			} 
			{
				\big(\avg{M}_{x_\alpha}^2 + \avg{M}_{y_\alpha}^2 + \avg{M}_{z_\alpha}^2 -{(\avg{\Gamma}_\alpha})^2  \big)
			} 
		\end{rcases} \text{ for } \alpha \in \{i,e\}.
		\label{flux_fluid_EC_x}
	\end{align}
	Here, $L_{\beta_\alpha}=\left(\frac{1}{\gamma_\alpha-1} \frac{1}{\beta^{ln}_\alpha}+1\right)$, $M_{x_\alpha}=\Gamma_\alpha v_{x_\alpha}$, $M_{y_\alpha}=\Gamma_\alpha v_{y_\alpha}$, $M_{z_\alpha}=\Gamma_\alpha v_{z_\alpha}$ and $\beta_\alpha=\frac{\rho_\alpha}{p_\alpha}$. 
	
	\subsubsection{Entropy conservative flux for the electromagnetic components}
	The Maxwells' Eqns.~\eqref{max1}-\eqref{max4} have a linear flux. Therefore an entropy conservative flux for the Eqns.~\eqref{max1}-\eqref{max4} can be taken as
	\begin{equation}
		\mathbf{F}_{m,EC}^x(\con_L,\con_R)  = \dfrac{\mathbf{f}_m^x(\con_L) + \mathbf{f}_m^x(\con_R)}{2},   \label{flux_maxwell_EC_x}
	\end{equation}
        which is a central flux (\cite{Kumar2012}).

	\begin{theorem}
		From Theorem~\eqref{thm:conservative}, we observe that the scheme~\eqref{scheme_1D} with flux $\mathbf{F}_{EC}^x$ as in Eqns.~\eqref{flux_fluid_EC_x}-\eqref{flux_maxwell_EC_x} is entropy conservative. 
	\end{theorem}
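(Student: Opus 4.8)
The plan is to invoke Theorem~\ref{thm:conservative}: once we know that the composite flux $\mathbf{F}^x_{EC}=(\mathbf{F}^x_{i,EC},\mathbf{F}^x_{e,EC},\mathbf{F}^x_{m,EC})^\top$ of \eqref{flux_fluid_EC_x}--\eqref{flux_maxwell_EC_x} is consistent, symmetric, and entropy conservative in the sense of Definition~\ref{def:EC_ES}, the entropy conservation of scheme~\eqref{scheme_1D} is immediate. Since the flux and the entropy--entropy-flux pair both split into the three decoupled blocks indexed by $\alpha\in\{i,e,m\}$ --- the coupling being confined to the source, which by Remark~\ref{remark:ent_dot_source} drops out of the entropy balance --- it suffices to establish the three defining properties blockwise: for each $\alpha$ verify $\mathbf{F}^x_{\alpha,EC}(\con_\alpha,\con_\alpha)=\mathbf{f}^x_\alpha(\con_\alpha)$, symmetry in the two states, and the Tadmor-type identity $(\evar_{\alpha,R}-\evar_{\alpha,L})^\top\mathbf{F}^x_{\alpha,EC}=\psi^x_{\alpha,R}-\psi^x_{\alpha,L}$.

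For the two fluid blocks $\alpha\in\{i,e\}$ this is precisely the computation carried out in \cite{bhoriya_high-order_2022,Bhoriya2020}, where the flux~\eqref{flux_fluid_EC_x} is constructed so as to solve~\eqref{def_EC}; I would reproduce only the skeleton. Symmetry is visible on the face of~\eqref{flux_fluid_EC_x}: every entry is built from arithmetic means, logarithmic means, and symmetric rational combinations of these. For consistency, set $\con_L=\con_R=\con_\alpha$; every mean collapses to the pointwise value and the denominator of $F^x_{\alpha,EC,5}$ becomes $M_{x_\alpha}^2+M_{y_\alpha}^2+M_{z_\alpha}^2-\Gamma_\alpha^2=\Gamma_\alpha^2(|\vel_\alpha|^2-1)=-1$ (using $M_{\cdot_\alpha}=\Gamma_\alpha v_{\cdot_\alpha}$ and $\Gamma_\alpha^{-2}=1-|\vel_\alpha|^2$), after which a short substitution reproduces the five entries of $\mathbf{f}^x_\alpha$. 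The entropy-conservation identity is the substantive step: expand $\evar_{\alpha,R}-\evar_{\alpha,L}$ from the explicit entropy variables, rewrite each jump of a product or a logarithm via $\jump{ab}=\avg a\,\jump b+\avg b\,\jump a$ and $\jump{\log a}=\jump a/a^{\ln}$, dot these against the five components of~\eqref{flux_fluid_EC_x}, and check that the sum telescopes to $\jump{\rho_\alpha\Gamma_\alpha v_{x_\alpha}}=\jump{\psi^x_\alpha}$. This bookkeeping of the logarithmic-mean terms $L_{\beta_\alpha}$ and $\rho_\alpha^{\ln}$ against the relativistic factors is exactly where the particular form of~\eqref{flux_fluid_EC_x} is needed, and it is the only genuinely laborious step; I would lean on the cited references for it rather than recompute in full.

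For the electromagnetic block the argument is short and self-contained. The Maxwell flux is linear, $\mathbf{f}^x_m(\con_m)=\mathbf{A}_m\con_m$ for a constant matrix $\mathbf{A}_m$ read off directly from the definition of $\mathbf{f}^x_m$, and one checks at once that $\mathbf{A}_m$ is symmetric (the couplings $\bm{B}$--$\psi$ with weight $\kappa$, $\bm{E}$--$\phi$ with weight $\chi$, and the curl terms linking $\bm{B}$ and $\bm{E}$ all appear in symmetric pairs). The chosen entropy $\mathcal{U}_m=\tfrac12(|\bm{B}|^2+|\bm{E}|^2+\phi^2+\psi^2)$ is quadratic, so $\evar_m=\con_m$ and the associated entropy flux is $\mathcal{F}^x_m=\tfrac12\con_m^\top\mathbf{A}_m\con_m$ (indeed $(\mathcal{F}^x_m)'=\con_m^\top\mathbf{A}_m=\evar_m^\top(\mathbf{f}^x_m)'$ because $\mathbf{A}_m$ is symmetric), whence the potential is $\psi^x_m=\evar_m^\top\mathbf{f}^x_m-\mathcal{F}^x_m=\tfrac12\con_m^\top\mathbf{A}_m\con_m$. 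Symmetry of $\mathbf{A}_m$ gives $\jump{\psi^x_m}=\tfrac12\bigl(\con_{m,R}^\top\mathbf{A}_m\con_{m,R}-\con_{m,L}^\top\mathbf{A}_m\con_{m,L}\bigr)=\jump{\con_m}^\top\mathbf{A}_m\avg{\con_m}$, while the central flux~\eqref{flux_maxwell_EC_x} yields $\jump{\evar_m}^\top\mathbf{F}^x_{m,EC}=\tfrac12\jump{\con_m}^\top(\mathbf{A}_m\con_{m,L}+\mathbf{A}_m\con_{m,R})=\jump{\con_m}^\top\mathbf{A}_m\avg{\con_m}$ as well; the two sides coincide, so~\eqref{def_EC} holds for $\alpha=m$, and consistency and symmetry of~\eqref{flux_maxwell_EC_x} are trivial. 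Assembling the three blocks, $\mathbf{F}^x_{EC}$ satisfies all hypotheses of Theorem~\ref{thm:conservative}, and the conclusion follows.
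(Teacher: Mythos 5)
Your proposal is correct and follows exactly the route the paper intends: the theorem is a direct corollary of Theorem~\ref{thm:conservative}, requiring only that the composite flux be consistent, symmetric, and entropy conservative blockwise, with the fluid identity deferred to \cite{bhoriya_high-order_2022,Bhoriya2020} and the Maxwell block handled by the symmetric-linear-flux/central-flux argument. Your verification of the electromagnetic block (symmetry of the constant flux matrix, quadratic entropy, and the telescoping of $\jump{\evar_m}^\top\mathbf{F}^x_{m,EC}$ against $\jump{\psi^x_m}$) is in fact more explicit than what the paper records, which states the result without proof.
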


	\subsubsection{Entropy stable Schemes}
	Once we have the entropy conservative scheme, we modify the element boundary fluxes using the following theorem to obtain the one-dimensional entropy stable schemes for the complete one-dimensional system~\eqref{conservedform_1d}.
	\begin{theorem}
		If the element interface flux $\mathbf{\hat f}_{i+1/2}$ is entropy stable in the sense of definition \ref{def:EC_ES}, then the scheme~\eqref{scheme_1D} is entropy stable.
	\end{theorem}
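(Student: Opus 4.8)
The plan is to reuse, essentially verbatim, the entropy balance established in the proof of Theorem~\ref{thm:conservative}. The key observation is that that derivation --- collapsing the volume term $2\sum_m D_{jm}\mathbf F^x_{EC}(\con_j,\con_m)$ by means of the SBP identities~\eqref{SBP}, the row and column sum properties~\eqref{row_sum}--\eqref{column_sum}, and the consistency, symmetry and entropy conservation~\eqref{def_EC} of the \emph{volume} flux $\mathbf F^x_{EC}$, together with $\evar_j^\top\sour_j=0$ from Remark~\ref{remark:ent_dot_source} --- never invoked any property of the element interface flux; the boundary terms $\tfrac{\tau_j}{\omega_j}(\mathbf f^x_j-\mathbf f^x_{*,j})$ were merely carried along until the end. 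Hence, with the entropy stable interface flux $\hat{\mathbf f}$ of Definition~\ref{def:EC_ES} at the element boundaries, the identical manipulations give, for each cell $I^x_i$,
\[
\frac{d}{dt}\int_{I^x_i}\ent\,\intdx
= \Big(\psi_{i+1/2}^{-} - (\evar_{i+1/2}^{-})^\top\hat{\mathbf f}_{i+1/2}\Big)
- \Big(\psi_{i-1/2}^{+} - (\evar_{i-1/2}^{+})^\top\hat{\mathbf f}_{i-1/2}\Big),
\]
where $\psi=\psi^x_i+\psi^x_e+\psi^x_m$, the superscripts $-$ and $+$ denote the DG traces at an interface from its left and right element, and $\hat{\mathbf f}_{i+1/2}=\hat{\mathbf f}(\con_{i+1/2}^-,\con_{i+1/2}^+)$.

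Next I would introduce the numerical entropy flux
\[
\hat{\entf}^x_{i+1/2} := \tfrac12\big(\evar_{i+1/2}^{-}+\evar_{i+1/2}^{+}\big)^\top\hat{\mathbf f}_{i+1/2}
- \tfrac12\big(\psi_{i+1/2}^{-}+\psi_{i+1/2}^{+}\big),
\]
and verify it is consistent: putting both traces equal to $\con$ and using consistency of $\hat{\mathbf f}$ together with $\psi^x=\evar^\top\mathbf f^x-\entf^x$ (obtained by summing~\eqref{entropy_defn_general} over $\alpha\in\{i,e,m\}$) gives $\hat{\entf}^x_{i+1/2}=\entf^x(\con)$. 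Substituting $\hat{\entf}^x$ into the cell identity and regrouping the contribution of each interface so that it is shared between the two cells it separates, cell $I^x_i$ inherits from $x_{i+1/2}$ and from $x_{i-1/2}$ the term $\tfrac12\big(\jump{\evar}^\top\hat{\mathbf f}-\jump{\psi}\big)$ evaluated at the respective interface, with $\jump{\cdot}=(\cdot)^+-(\cdot)^-$; each such term is exactly one half the left-hand side of the entropy stability condition~\eqref{def_ES} with $(\con_L,\con_R)=(\con^-,\con^+)$ at that interface, hence nonpositive. This yields the semi-discrete cell entropy inequality
\[
\frac{d}{dt}\int_{I^x_i}\ent\,\intdx + \hat{\entf}^x_{i+1/2} - \hat{\entf}^x_{i-1/2} \le 0,
\]
the discrete analogue of~\eqref{ent_inq}; summing over $i$, the numerical entropy fluxes telescope, so the total discrete entropy is controlled by the physical boundary data and is non-increasing for periodic or compactly supported states.

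The argument is essentially bookkeeping and there is no real analytic obstacle. The one point that requires care is the passage from the two-term cell identity to the flux-difference form: one must match node $k$ of $I^x_i$ (at $x_{i+1/2}$) with the left trace $\con_{i+1/2}^-$ and node $0$ (at $x_{i-1/2}$) with the right trace $\con_{i-1/2}^+$, consistently with the notation of the proof of Theorem~\ref{thm:conservative}, and then split each interface dissipation term evenly between its two adjacent cells --- a step where sign errors are easy. I would close by noting that the Local Lax-Friedrichs flux used in the numerical section, namely $\mathbf F^x_{EC}$ augmented by dissipation proportional to $\jump{\con}$ (or to $\jump{\evar}$), satisfies~\eqref{def_ES}, because that dissipation pairs nonnegatively with $\jump{\evar}$ by the strict convexity of the entropy $\ent$; thus the hypothesis of the theorem is met by the scheme actually implemented.
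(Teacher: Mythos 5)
Your proposal is correct and follows essentially the same route as the paper: reuse the cell entropy balance already derived in the proof of Theorem~\ref{thm:conservative} and then invoke the entropy stability condition~\eqref{def_ES} of the interface flux. If anything, your write-up is more careful than the paper's, which asserts that the per-cell boundary expression is $\le 0$ outright, whereas the nonpositivity really emerges only after pairing the two traces at each interface --- equivalently, after introducing the consistent numerical entropy flux and splitting the interface dissipation term $\jump{\evar}^\top\hat{\mathbf f}-\jump{\psi}$ between the two neighbouring cells, exactly as you do.
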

	\begin{proof}
		Assume compactly supported or periodic boundary conditions. The total entropy in the $i$'th cell changes according to
		\begin{align}
			\df{}{t}
			\left(
			\sum_{j = 0}^k \dfrac{\dx}{2} \omega_j \ent_j^i
			\right)\nonumber
			& =
			( \psi_k^i - {(\evar_k^i)}^\top \hat{\mathbf{f}}_{\iph})
			-
			( \psi_0^{i} - {(\evar_0^{i})}^\top \hat{\mathbf{f}}_{\imh})\nonumber
			\\
			& \le 0 \nonumber 
		\end{align}
		i.e., the scheme~\eqref{scheme_1D} with entropy stable fluxes at the interfaces is entropy stable. 
	\end{proof}
	
	    In \cite{Chen2017}, the Local Lax–Friedrichs (LLF) flux is proven to be entropy stable when the wave speed in the LLF flux is taken as a suitable approximation of the average wave speed. Therefore, we choose the Local Lax-Friedrichs fluxes at the interfaces to dissipate the total entropy. Collectively, we have the following theorem. 
	
	\begin{theorem}
		The scheme~\eqref{scheme_1D} with the interior fluxes $\mathbf{F}_{EC}^x$ as in Eqns.~\eqref{flux_fluid_EC_x},~\eqref{flux_maxwell_EC_x} and interface fluxes as local Lax-Friedrichs fluxes is entropy stable.
	\end{theorem}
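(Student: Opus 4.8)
The plan is to obtain the statement as a direct consequence of the two theorems already proved in this subsection. The scheme in question is \eqref{scheme_1D} with the block-diagonal interior flux $\mathbf{F}_{EC}^x=(\mathbf{F}_{i,EC}^x,\mathbf{F}_{e,EC}^x,\mathbf{F}_{m,EC}^x)^\top$ of \eqref{flux_fluid_EC_x}--\eqref{flux_maxwell_EC_x} together with the Local Lax--Friedrichs (LLF) flux at the cell interfaces. By the preceding theorem (an entropy stable interface flux produces an entropy stable scheme), it suffices to establish two claims: (i) $\mathbf{F}_{EC}^x$ is consistent, symmetric and entropy conservative in the sense of Definition~\ref{def:EC_ES}, so that Theorem~\eqref{thm:conservative} applies and the volume quadrature contributes exactly a consistent entropy flux and nothing more; and (ii) the LLF interface flux is entropy stable in the sense of \eqref{def_ES}.

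I would prove (i) by exploiting the block structure of the system. Since $\con=(\con_i,\con_e,\con_m)^\top$ with $\mathbf{f}^x=(\mathbf{f}_i^x,\mathbf{f}_e^x,\mathbf{f}_m^x)^\top$ decoupled, and since $\ent=\ent_i+\ent_e+\ent_m$, the entropy variables $\evar$ and the potentials $\psi^x$ split blockwise; hence the single scalar identity \eqref{def_EC} for $\mathbf{F}_{EC}^x$ is equivalent to the three separate identities for $\mathbf{F}_{i,EC}^x$, $\mathbf{F}_{e,EC}^x$, $\mathbf{F}_{m,EC}^x$, and likewise for consistency and symmetry. For the two fluid blocks, \eqref{flux_fluid_EC_x} is exactly the relativistic-hydrodynamics entropy conservative flux of \cite{bhoriya_high-order_2022,Bhoriya2020}, where these properties are verified; I would cite that. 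For the electromagnetic block $\mathbf{f}_m^x$ is linear with a symmetric Jacobian and $\ent_m$ is the quadratic electromagnetic energy, so $\evar_m=\con_m$ and $\psi_m^x$ is itself a quadratic form; a one-line computation then shows the central flux \eqref{flux_maxwell_EC_x} satisfies $(\evar_{m,R}-\evar_{m,L})^\top\mathbf{F}_{m,EC}^x=\psi_{m,R}^x-\psi_{m,L}^x$ (cf.\ \cite{Kumar2012}). Assembling the three blocks yields all the hypotheses of Theorem~\eqref{thm:conservative}.

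For (ii) I would invoke the analysis of \cite{Chen2017}. By the Lemma of Section~\ref{subsec:hyp} the system is hyperbolic on $\Omega$ with a complete set of real eigenvalues and is symmetrized by the strictly convex entropy $\ent$. Comparing the LLF flux with the entropy conservative flux, $\hat{\mathbf{f}}_{i+1/2}=\mathbf{F}_{EC}^x(\con_L,\con_R)-\tfrac{\lambda_{i+1/2}}{2}(\con_R-\con_L)$ up to a symmetric central term, and using that $\evar=\partial\ent/\partial\con$ with $\ent$ strictly convex so that $(\evar_R-\evar_L)^\top(\con_R-\con_L)\ge 0$, one gets $(\evar_R-\evar_L)^\top\hat{\mathbf{f}}_{i+1/2}\le\psi_R^x-\psi_L^x$ as soon as $\lambda_{i+1/2}$ dominates the local spectral radius of $\mathbf{A}^x$; the usual choice $\lambda_{i+1/2}=\max\bigl(\varrho(\mathbf{A}^x(\con_L)),\varrho(\mathbf{A}^x(\con_R))\bigr)$ achieves this, with $\varrho$ the spectral radius. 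Thus the LLF interface flux is entropy stable.

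With (i) and (ii) in place, the preceding entropy stability theorem applies: the per-cell entropy balances of Theorem~\eqref{thm:conservative} are summed over the mesh, the internal interface contributions telescope into non-positive terms by \eqref{def_ES}, the source contributions vanish by Remark~\ref{remark:ent_dot_source}, and the total discrete entropy is non-increasing, which is the discrete form of \eqref{ent_inq}. The only step carrying real content is (ii), the entropy stability of the LLF flux for this coupled eighteen-component system; as sketched, it reduces entirely to convexity of $\ent$ and the hyperbolicity/symmetrizability established in Section~\ref{subsec:hyp}, so I expect no genuine obstacle beyond a careful citation of \cite{Chen2017}.
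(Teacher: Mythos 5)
Your proposal is correct and follows essentially the same route as the paper, which states this theorem without a written proof, simply assembling the preceding entropy-conservation theorem, the interface-flux theorem, and the citation of \cite{Chen2017} for the entropy stability of the local Lax--Friedrichs flux; your write-up merely makes explicit the blockwise verification of \eqref{def_EC} and the vanishing of the source contribution that the paper leaves implicit. The one point to state more carefully is your claim that taking $\lambda_{i+1/2}$ as the maximum of the endpoint spectral radii suffices: the result in \cite{Chen2017} requires the dissipation coefficient to dominate an averaged wave speed along a path connecting the two states (the paper itself hedges with ``a suitable approximation of the average wave speed''), so the endpoint-max choice is the standard practical surrogate rather than a rigorously sufficient one.
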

	
	\subsection{Two-dimensional schemes}
	We now describe the two-dimensional numerical schemes. Consider a two-dimensional computational domain of the form $D_{xy}=I^x \times I^y$ with $I^x=(x_{min},x_{max})$ and $I^y=(y_{min},y_{max})$. For $M,N\in \mathbb{N}$, the domain $D_{xy}$ is discretized as follows
	\begin{align}
		\begin{rcases}
			x_{min} = x_{\half} < x_{\frac{3}{2}} < \dots < x_{M+\half} = x_{max}
			\\
			y_{min} = y_{\half} < y_{\frac{3}{2}} < \dots < y_{N+\half} = y_{max} 
		\end{rcases}
	\end{align}
	The computational subcells $\left[x_{i-\half}, x_{i+\half}\right] \times \left[y_{j-\half}, y_{j+\half}\right], i\in \{1,2,\dots,N_x\},j \in \{1,2,\dots,N_y\},$ are denoted by $I_{ij}$. The step size $\dx_i$ is given by $\dx_i = x_{i+\half}-x_{i-\half}$ and $\dy_j$ is given by $\dy_j = y_{j+\half}-y_{j-\half}$. We consider the $(k+1)$ Gauss-Lobatto-Legendre (GLL) points in both directions, with corresponding weights defined by $\{\omega_l\}_{l = 0}^k$. The change of variable formulas between the cell $I_{ij}$ and reference element $I = [-1,1]^2$ is given by
	\begin{equation}
		x = x_i(\xi) = \dfrac{x_{\imh} + x_{\iph}}{2} + \dfrac{\dx_i}{2} \xi \quad  \text{and} \quad
		y = y_j(\eta) = \dfrac{y_{\jmh} + y_{\jph}}{2} + \dfrac{\dy_j}{2} \eta.
	\end{equation}
	The solution inside the cell $I_{ij}$ is approximated by a tensor product of polynomials of the form
	\[
	\con_h(x_i(\xi), y_j(\eta)) = \sum_{q=0}^k \sum_{p=0}^k \con_{p,q} L_p(\xi) L_q(\eta), \qquad \con_{p,q} = \con_h(x_i(\xi_p), y_j(\xi_q))
	\]
    where $\con_{p,q}$ are the solution values at the GLL points, which depend on time $t$, but this is not explicitly indicated in the notation.  Accordingly, the discrete integrals of $\mathbf{U}$ and $\mathbf{\mathcal{U}}$ in the cell $I_{ij}$ are approximated as follows
	\begin{gather*}
		\int_{I_{ij}} \con  \mathrm{d}x \mathrm{d}y \approx 
		\frac{\dx_i \dy_j}{4} \sum_{q = 0}^k \sum_{p = 0}^k \omega_p \omega_q \con_{p,q}
		\intertext{and}
		\int_{I_{ij}} \ent \mathrm{d}x \mathrm{d}y \approx 
		\frac{\dx_i \dy_j}{4} \sum_{q = 0}^k \sum_{p = 0}^k\omega_p \omega_q \ent(\con_{p,q}).
	\end{gather*}
	We proceed similarly to the one-dimensional case, to obtain the two-dimensional entropy stable schemes for a single element. The semi-discrete scheme for the cell $I_{ij}$ is given by   
	\begin{align}
		\dd{{\con_{{p,q}}}}{t} 
		&=
		-\dfrac{2}{\dx_i} 
		\left(
		2 \sum_{m=0}^{k} D_{pm} {\mathbf{F}_{EC}^x} (\con_{p,q},\con_{m,q})
		-\frac{\tau_p}{\omega_p} ({\mathbf{f}}^{x}_{{p,q}} - {\mathbf{f}}^{x}_{*,{p,q}} )
		\right)\nonumber
		\\
		&
		\quad
		-\dfrac{2}{\dy_j} 
		\left(
		2 \sum_{m=0}^{k} D_{qm} {\mathbf{F}_{EC}^y} (\con_{p,q},\con_{p,m})
		-\frac{\tau_q}{\omega_q} ({\mathbf{f}}^{y}_{{p,q}} - {\mathbf{f}}^{y}_{*,{p,q}} )
		\right)
		\nonumber
		\\
		&
		\quad
		+
		{\sour_{{p,q}}}, \qquad  \ 0 \le {p,q} \le k. 
		\label{scheme_2D}
	\end{align}
	Here, we have used the following notations 
	\begin{align}
		\begin{rcases}
			\mathbf{f}^x_{p,q} = \mathbf{f}^x(\con_{p,q})
			, \
			\mathbf{f}^y_{p,q} = \mathbf{f}^y(\con_{p,q})
			, \
			\sour_{p,q} = \sour(\con_{p,q})
			\\
			\mathbf{f}^x_{*,p,q} = \begin{cases} 
				\mathbf{f}^x_{\imh,q} = \mathbf{f}^x(\con(x^-_\imh,y_j(\xi_q)),\con(x^+_\imh,y_j(\xi_q))) & \text{ if } p = 0
				\\
				0 & \text{ if } 0 < p < k
				\\
				\mathbf{f}^x_{\iph,q} = \mathbf{f}^x(\con(x^-_\iph,y_j(\xi_q)),\con(x^+_\iph,y_j(\xi_q))) & \text{ if } p = k 
			\end{cases} 
			\\
			\mathbf{f}^y_{*,p,q} = \begin{cases} 
				\mathbf{f}^y_{p,\jmh} = \mathbf{f}^y(\con(x_i(\xi_p),y^-_\imh),\con(x_i(\xi_p),y^+_\imh)) & \text{ if } q = 0
				\\
				0 & \text{ if } 0 < q < k
				\\
				\mathbf{f}^y_{p,\jph} = \mathbf{f}^y(\con(x_i(\xi_p),y^-_\iph),\con(x_i(\xi_p),y^+_\iph)) & \text{ if } q = k
			\end{cases} 
		\end{rcases}.
		\label{notations_2D}
	\end{align}
	The $x-$directional entropy conservative flux $\mathbf{F}^x_{EC} = (\mathbf{F}^x_{i,EC},\mathbf{F}^x_{e,EC},\mathbf{F}^x_{m,EC})^\top$ is given by Eqn.~\eqref{flux_fluid_EC_x}, \eqref{flux_maxwell_EC_x}. Following~\cite{bhoriya_high-order_2022}, the $y-$directional entropy conservative flux $\mathbf{F}^y_{EC} = (\mathbf{F}^y_{i,EC},\mathbf{F}^y_{e,EC},\mathbf{F}^y_{m,EC})^\top$ is given by 
	\begin{align}
		\begin{rcases}
			F_{\alpha,EC,1}^y & = \rho_\alpha^{\ln} \avg{M}_{y_\alpha} 
			\\ 
			F_{\alpha,EC,2}^y & = \frac{\avg{M}_{x_\alpha}}{\avg{\Gamma}_\alpha} F_{\alpha,EC,5}^y
			\\
			F_{\alpha,EC,3}^y & = \frac{1}{\avg{\beta}_\alpha} \left( \frac{\avg{\beta}_\alpha \avg{M}_{y_\alpha}}{\avg{\Gamma}_\alpha} F_{\alpha,EC,5}^y + \avg{\rho}_\alpha \right)  
			\\
			F_{\alpha,EC,4}^y & = \frac{\avg{M}_{z_\alpha}}{\avg{\Gamma}_\alpha} F_{\alpha,EC,5}^y
			\\
			F_{\alpha,EC,5}^y & = \frac{
				-\avg{\Gamma}_\alpha{\big( L_{\beta_\alpha} {\rho}_\alpha^{\ln}\avg{M}_{y_\alpha} +\frac{\avg{M}_{y_\alpha}\avg{\rho}_\alpha}{\avg{\beta}_\alpha}}\big)
			} 
			{
				\big(\avg{M}_{x_\alpha}^2 + \avg{M}_{y_\alpha}^2 + \avg{M}_{z_\alpha}^2 -{(\avg{\Gamma}_\alpha)}^2  \big)
			} 
		\end{rcases} \text{ for } \alpha \in \{i,e\},
		\label{flux_fluid_EC_y}
	\end{align}
	and
	\begin{align}
		\hspace{-2cm}\mathbf{F}_{m,EC}^y(\con_L,\con_R) = \dfrac{\mathbf{f}_m^y(\con_L) + \mathbf{f}_m^y(\con_R)}{2}.   \label{flux_maxwell_EC_y}
	\end{align}
	We can now state the following result:
	\begin{theorem}
		The two-dimensional scheme defined by~\eqref{scheme_2D} with the conservative flux as in Eqns.~\eqref{flux_fluid_EC_x}, \eqref{flux_maxwell_EC_x},\eqref{flux_fluid_EC_y},\eqref{flux_maxwell_EC_y} and interface fluxes as the Local Lax-Friedrichs fluxes is entropy stable.
	\end{theorem}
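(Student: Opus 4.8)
The plan is to reduce this two-dimensional statement to the one-dimensional analysis already carried out in the proof of Theorem~\ref{thm:conservative} and its entropy-stable refinement, by exploiting the tensor-product structure of the scheme~\eqref{scheme_2D}. First I would contract the nodal equations with the entropy variables: multiply the equation for the node $(p,q)$ in cell $I_{ij}$ by $\tfrac{\dx_i\dy_j}{4}\,\omega_p\omega_q\,\evar_{p,q}^\top$ and sum over $0\le p,q\le k$. By the chain rule and the definition of the discrete cell integral, the left-hand side becomes $\tfrac{d}{dt}\int_{I_{ij}}\ent\,\mathrm dx\,\mathrm dy$. The source contribution is $\sum_{p,q}\tfrac{\dx_i\dy_j}{4}\,\omega_p\omega_q\,\evar_{p,q}^\top\sour_{p,q}$, which vanishes nodewise because $\evar^\top\sour=0$ by Remark~\ref{remark:ent_dot_source}; hence, exactly as in one dimension, the source terms never enter the entropy balance.

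Next I would treat the $x$- and $y$-directional volume and boundary contributions separately by ``freezing'' the transverse index. Fix $q$: using $\omega_p D_{pm}=S_{pm}$ and $\omega_p(\tau_p/\omega_p)=\tau_p$, the entire $x$-contribution equals $\tfrac{\dy_j}{2}\,\omega_q$ times precisely the one-dimensional quantity $-2\sum_{p,m}S_{pm}\evar_{p,q}^\top\mathbf{F}^x_{EC}(\con_{p,q},\con_{m,q})+\sum_p\tau_p\,\evar_{p,q}^\top(\mathbf{f}^x_{p,q}-\mathbf{f}^x_{*,p,q})$ that was manipulated in the proof of Theorem~\ref{thm:conservative}. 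Invoking the SBP relations~\eqref{SBP}, the symmetry and entropy-conservation property~\eqref{def_EC} of $\mathbf{F}^x_{EC}$ (valid for the two fluid blocks by the construction of~\cite{bhoriya_high-order_2022,Bhoriya2020}, and trivially for the linear Maxwell block with the central flux~\eqref{flux_maxwell_EC_x}), together with the column-sum identity~\eqref{column_sum}, this collapses to the one-dimensional boundary expression $(\psi^x_{k,q}-\evar_{k,q}^\top\mathbf{f}^x_{*,k,q})-(\psi^x_{0,q}-\evar_{0,q}^\top\mathbf{f}^x_{*,0,q})$. Summing over $q$ with weights $\tfrac{\dy_j}{2}\omega_q$ yields the net $x$-entropy flux through the two vertical faces of $I_{ij}$, and the verbatim argument in the $y$-direction, now with the $y$-entropy potential and the $y$-directional entropy-conservative flux~\eqref{flux_fluid_EC_y},~\eqref{flux_maxwell_EC_y}, gives the net $y$-entropy flux through the two horizontal faces. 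Thus each cell obeys a discrete entropy balance $\tfrac{d}{dt}\int_{I_{ij}}\ent\,\mathrm dx\,\mathrm dy = -(\mathcal{F}^x_{\iph,j}-\mathcal{F}^x_{\imh,j})-(\mathcal{F}^y_{i,\jph}-\mathcal{F}^y_{i,\jmh})+\Theta_{ij}$, where $\mathcal{F}^x,\mathcal{F}^y$ are consistent numerical entropy fluxes assembled from averages of $\evar$, $\psi^{x}$ (resp.\ $\psi^{y}$) and the entropy-conservative volume fluxes exactly as at the end of the proof of Theorem~\ref{thm:conservative}, and $\Theta_{ij}$ collects the face terms coming from the mismatch between the physical flux and the numerical interface flux $\hat{\mathbf{f}}$.

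Finally, I would sum over all cells with periodic or compactly supported data. The consistent entropy fluxes $\mathcal{F}^x$ and $\mathcal{F}^y$ telescope and cancel across interior interfaces, while the collected face terms $\sum_{ij}\Theta_{ij}$ reduce, face by face, to the one-dimensional quantity $(\evar_R-\evar_L)^\top\hat{\mathbf{f}}-(\psi_R-\psi_L)$ at each interface, summed over the transverse quadrature nodes with positive weights. Since $\hat{\mathbf{f}}$ is the Local Lax--Friedrichs flux, which is entropy stable in the sense of Definition~\ref{def:EC_ES} once its numerical viscosity dominates the suitably averaged spectral radius of the Jacobian~\cite{Chen2017}, each such contribution is non-positive, so $\tfrac{d}{dt}\sum_{ij}\int_{I_{ij}}\ent\,\mathrm dx\,\mathrm dy\le 0$, which is the asserted discrete entropy stability. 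I expect the only real difficulty to be bookkeeping: organizing the double sums so that the frozen-transverse-index reduction to Theorem~\ref{thm:conservative} is transparent and the interface contributions assemble correctly; all the analytic content already lives in Theorem~\ref{thm:conservative}, in the entropy-conservation identities~\eqref{flux_fluid_EC_x}--\eqref{flux_maxwell_EC_y}, and in the entropy stability of the LLF interface flux.
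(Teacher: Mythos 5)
Your proposal is correct and follows exactly the route the paper intends: the paper's own proof of this theorem is the single line ``similar to the one-dimensional case,'' and your argument is precisely that reduction carried out in detail --- contracting with $\evar_{p,q}^\top$, dropping the source via Remark~\ref{remark:ent_dot_source}, freezing the transverse index to reuse the SBP/column-sum manipulations of Theorem~\ref{thm:conservative} direction by direction, and invoking the entropy stability of the LLF interface flux. No gaps; you have simply supplied the bookkeeping the paper omits.
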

	\begin{proof}
		The proof is similar to the proof of one-dimensional case.
	\end{proof}
	
	\section{Time discretization} \label{sec:full_disc}
	
	We use explicit strong stability preserving Runge-Kutta (SSP-RK) methods for the time-update of ODEs given in Eqns.~\eqref{scheme_1D} and \eqref{scheme_2D}. The analytical details of the presented time schemes can be found in~\cite{Gottlieb2001}. For clarity of notations, we ignore the nodal subscripts $p$ and $q$. Consequently, the update Eqns.~\eqref{scheme_1D},\eqref{scheme_2D} can be written in a compact form as 
	\begin{equation}
		\frac{d \mathbf{U}(t) }{dt}
		=
		\mathcal{L}(\mathbf{U}(t)), 
		\label{fullydiscrete}
	\end{equation}
	where $\mathcal{L}(\mathbf{U}(t))$ denotes the right hand side (RHS) of the scheme given by Eqn.~\eqref{scheme_1D} or Eqn.~\eqref{scheme_2D}. The initial solution at time $t = t^n$ is denoted by $\mathbf{U}^n$, the time step is $\Delta t = t^{n+1} - t^n$ and the updated solution at time $t=t^{n+1}$ is denoted by $\mathbf{U}^{n+1}$. The second and third-order accurate SSP-RK schemes to get the updated solution $\con^{n+1}$ are given by,
	\begin{enumerate}
		\item Let $\con^{(0)} \ = \ \con^n$.
		\item Let $r=2, 3$ for the second and third order schemes, respectively. For $i=1,\dots,r$, compute
		\begin{equation}
			\con^{(i)} \
			= \
			\sum_{m=0}^{i-1} \left[ \alpha_{im}\con^{(m)}
			+
			\beta_{im}\Delta t \mathcal{L}(\con^{(m)}) \right]
			\label{ssprk}
		\end{equation}
		where, the coefficients $\alpha_{im}$ and $\beta_{im}$ are given in Table~\eqref{table:ssp}.
		\item The updated solution is given by $\con^{n+1} \ =  \ \con^{(r)}$.
	\end{enumerate}

	\begin{table}[h]
		\caption[h]{Coefficients for Runge-Kutta time stepping:}
		\centering
		\begin{tabular}{ l|ccc|ccc }
			\hline
			Order              & \multicolumn{3}{|c|}{$\alpha_{im}$} & \multicolumn{3}{|c}{$\beta_{im}$}                       \\
			\hline
			\multirow{2}{*}{2} & 1     &     &     & 1 &     &     \\
			& 1/2   & 1/2 &     & 0 & 1/2 &     \\
			\hline
			\multirow{3}{*}{3} & 1     &     &     & 1 &     &     \\
			& 3/4   & 1/4 &     & 0 & 1/4 &     \\
			& 1/3   & 0   & 2/3 & 0 & 0   & 2/3 \\
			\hline
		\end{tabular}
		\label{table:ssp}
	\end{table}
	To achieve the fourth-order accuracy in the time direction, we use the fourth-order accurate RK-SSP from~\cite{spiteri2002new} which is given by
	\begin{subequations}
		\begin{align*}
			\mathbf{U}^{(1)} &= \textbf{U}^n + 0.39175222700392 \;(\Delta t) \mathcal{L}(\mathbf{U}^n), \\
			\mathbf{U}^{(2)} &= 0.44437049406734 \;\textbf{U}^n + 0.55562950593266\; \mathbf{U}^{(1)} 
			    \\ 
				&\quad 
				+0.36841059262959\;  (\Delta t) \mathcal{L}(\mathbf{U}^{(1)}), \\
			\mathbf{U}^{(3)} &= 0.62010185138540\; \textbf{U}^n + 0.37989814861460\; \mathbf{U}^{(2)}
			    \\ 
				&\quad 
			    +0.25189177424738\;  (\Delta t) \mathcal{L}(\mathbf{U}^{(2)}), \\
			\mathbf{U}^{(4)} &= 0.17807995410773\; \textbf{U}^n + 0.82192004589227\; \mathbf{U}^{(3)}
			    \\ 
				&\quad 
			    + 0.54497475021237\;  (\Delta t)  \mathcal{L}(\mathbf{U}^{(3)}), \\
			\textbf{U}^{n+1} &= 0.00683325884039 \;\textbf{U}^n + 0.51723167208978\; \mathbf{U}^{(2)}
			    \\ 
				&\quad 
			    + 0.12759831133288 \;\mathbf{U}^{(3)}
			    + 0.34833675773694\; \mathbf{U}^{(4)}
			    \\ 
				&\quad 
				+ 0.08460416338212 \; (\Delta t) \mathcal{L}(\mathbf{U}^{(3)})+  0.22600748319395 \; (\Delta t) \mathcal{L}(\mathbf{U}^{(4)}).
		\end{align*}
	\end{subequations}
	To obtain the second-order accurate scheme, we take $k = 1$ and integrate it with the second-order SSP-RK scheme; we denote the scheme by $\second$. For the third-order accurate scheme, we choose $k=2$ and use the third-order explicit time stepping; the resultant scheme is denoted by $\third$. Similarly, for the fourth-order scheme, we consider $k = 3$ and use the fourth-order SSP Runge-Kutta scheme; we denote the obtained scheme by $\fourth$.

	\section{Numerical results} \label{sec:num_test}
	
	This section presents several numerical tests for the two-fluid relativistic plasma flows in one and two dimensions. We use the notations $N_x$ for the number of cells in the $x$-direction and $N_y$ for the number of cells in the $y$-direction. The specific heat constants for both the species (ion and electron) are assumed to be equal, i.e., $\gamma_i=\gamma_e=\gamma$ with $\gamma=4/3$, unless specified otherwise. In all the test cases, we set initial potentials $\phi$ and $\psi$ to be 0. The potential speeds are set to $\chi=\kappa=1$. For the one-dimensional ($x$-directional) test cases, the time step $\Delta t$ is determined as,
	\begin{gather}
		{\Delta t} =  \text{CFL} \cdot \min \left\{\dfrac{\Delta^i x}{\Lambda_{max}^x(\mathbf{U}_i)} : 1 \le i \le N_x\right\}, 
		\label{dt_1d}
	\end{gather}
	where,
	\begin{gather}
		\Delta^i x = (\xi_1 - \xi_0) \Delta x_i \quad\text{ and }\quad
		\Lambda_{max}^x(\mathbf{U}_i) = \max\{|\Lambda^x_l(\mathbf{U}_i^p)|: 0 \le p \le k, 1 \le l \le 18\}.
		\nonumber
	\end{gather}
	The annotations $\Delta^i x$ and $\Lambda_{max}^x(\mathbf{U}_i)$ are analogs of the local step size and maximum local speed, respectively. For the one-dimensional test cases, we use the phrase \textit{zone size} for the uniform step size $\Delta x = (x_{\iph} - x_\imh)$. In the two-dimensional case, the CFL formula in Eqn.~\eqref{dt_1d} generalized as follows
	\begin{gather*}
		{\Delta t} = \text{CFL} \cdot \min \left\{ \left(\dfrac{\Delta^{i} x}{\Lambda_{max}^x(\mathbf{U}_{i,j})} + \dfrac{\Delta^{j} y}{\Lambda_{max}^y(\mathbf{U}_{i,j})}\right) : 1 \le i \le N_x, \ 1 \le j \le N_y \right\}
	\end{gather*}
	where,
	$$
	\Delta^{j} y =(\xi_1 - \xi_0) \Delta y_j,,
	$$
	$$
	\Lambda_{max}^x(\mathbf{U}_{i,j}) = \max \{|\Lambda^x_l(\mathbf{U}^{p,q}_{i,j})| : 0 \le p,q \le k, 1 \le l \le 18 \},
	$$ 
	and 
	$$\Lambda_{max}^y(\mathbf{U}_{i,j}) = \max\{|\Lambda^y_l(\mathbf{U}^{p,q}_{i,j})|: 0 \le p,q \le k, 1 \le l \le 18\}.
	$$
	We choose CFL=0.15 for all the test problems, which leads to stable computations for all the schemes used in this work. \reva{For all the test cases, the parameters used and the grid sizes are such that the time step is restricted by the hyperbolic CFL condition and not by the source terms. Hence explicit time stepping schemes are used in the current work.}
 
	In some test cases, we need to consider resistivity effects; the momentum densities~(Eqns.~\eqref{momentum}) and the energy densities~(Eqns.~\eqref{energy}) are modified to include resistive terms. We follow \cite{Amano2016,Balsara2016} to write the modified equations as
	\begin{subequations}
		\begin{align}
			\frac{\partial \bm{M}_\alpha}{\partial t} + \nabla \cdot (\bm{M}_\alpha \bm{v}_\alpha  + p_\alpha \bm{I}) & = r_\alpha \Gamma_\alpha \rho_\alpha (\bm{E}+\bm{v}_\alpha \times \bm{B}) + \bm{R}_\alpha  , \label{resist_momentum} \\
			\frac{\partial \ener_\alpha}{\partial t} + \nabla \cdot ((\ener_\alpha+p_\alpha)\bm{v}_\alpha)    & = r_\alpha \Gamma_\alpha \rho_\alpha (\bm{v}_\alpha \cdot \bm{E}) + R_\alpha^0,	\label{resist_energy}
		\end{align}
	\end{subequations}
	where $\alpha \in \{i,e\}$. Following~\cite{Amano2016}, an anti-symmetric relationship, $(\bm{R}_e, {R}_e^0)=(-\bm{R}_i, -R_i^0)$, has been assumed for the conservation of the total momentum and energy density. The expressions for the terms $\bm{R}_i$ and $R_i^0$ are given by
	\begin{align*}
		& \bm{R}_i =  -\eta \dfrac{\omega_p^2}{r_i-r_e} (\bm{j}- \rho_0 \bm{\Phi}), \qquad
		{R}_i^0 =  -\eta \dfrac{\omega_p^2}{r_i-r_e} (\rho_c- \rho_0 {\Lambda}),
		\\
		& \omega_p^2 = r_i^2 \rho_i + r_e^2 \rho_e, \qquad
		\bm{\Phi} = \dfrac{r_i \rho_i \Gamma_i \vel_i + r_e \rho_e \Gamma_e \vel_e}{\omega_p^2},
		\\
		& \Lambda = \dfrac{r_i^2 \rho_i \Gamma_i  + r_e^2 \rho_e \Gamma_e }{\omega_p^2}, \qquad
		\rho_0 = \Lambda \rho_c - \bm{j} \cdot \bm{\Phi},
	\end{align*}
	and $\eta$ is the finite resistivity constant. In the above, $\omega_p$ denotes the total plasma frequency. The update \eqref{ssprk} is modified to include these terms consistently.
	In addition, We scale Maxwell's source by a factor of $4\pi$ for the test problems taken from~\cite{Balsara2016}, to have consistent results. 
    \begin{remark}
High order schemes generate numerical oscillations around discontinuities, and some form of non-linear limiting is required to control these oscillations. For the one-dimensional test cases, we use the slope limiter given in~\cite{Hesthaven2008}, and for the two-dimensional case, we use the methodology of~\cite{Gallego-Valencia2016} to define the two-dimensional slope limiter. We use a bound preserving limiter of~\cite{Qin2016} to keep the solution in the physical domain.
     \end{remark}	
 
	\subsection{One-dimensional test cases}
	In this section, we present several one-dimensional test cases. We compare the accuracy and robustness of the second ($\second$), third  ($\third$), and fourth-order ($\fourth$) accurate schemes.
	\subsubsection{Order of accuracy test} \label{test:1d_smooth}
 A problem with a known exact solution can be constructed using the forced solution approach from~\cite{Kumar2012},  by introducing a forcing source term to generalize the smooth test case given in~\cite{Bhoriya2020}. The resulting test problem can also be found in~\cite{bhoriya_high-order_2022}. We append extra source terms (denoted by $\bm{\mathcal{R}}(x,t)$) to the system and consider the updated system,
	\begin{equation*}
		\frac{\partial \mathbf{U}}{\partial t}+\frac{\partial \mathbf{f}^x}{\partial x} = \sour + \bm{\mathcal{R}}(x,t)
	\end{equation*}
	with
	\begin{align*}
		\bm{\mathcal{R}}(x,t)=\Big(\mathbf{0}_{13},
		   -\dfrac{1}{\sqrt{3}}(2+\sin(2 \pi (x-0.5t))),
		    0,
			-3\pi\cos(2 \pi (x-0.5t)),
			\\
			\dfrac{2 \chi}{\sqrt{3}}  (2+\sin(2 \pi (x-0.5t))) ,
			0 \Big)^\top.
	\end{align*}
	The exact solution is given by fluid densities $\rho_i = \rho_e = 2+\sin(2 \pi (x-0.5t))$, where we take ion and electron velocities as $v_{x_i}=v_{x_e} = 0.5$. The initial pressures are set as $p_i=p_e=1$. The magnetic field $\bm{B}$ has non-zero $y$-component given by $B_y=2 \sin(2 \pi (x-0.5t))$ and the electric field vector $\bm{E}$ has non-zero $z$-component given by $E_z = -\sin(2\pi (x-0.5t))$. All remaining variables are set to zero. The computations are performed on the one-dimensional domain $I=[0,1]$ with the periodic boundary conditions. We take unequal mass for the fluids to have a non-trivial source in the electric-field equation; specifically,  we choose $r_i = 1$ and $r_e = -2$. The adiabatic index is $\gamma=5/3$ and we integrate till time $t=2$ using the schemes $\second$, $\third$ and $\fourth$.
	
	\begin{table}[h]
		\centering
		\begin{tabular}{|c|c|c|c|c|c|c|}
			\hline
			$N_x$ & \multicolumn{2}{|c}{$\second$ } &
			\multicolumn{2}{|c}{$\third$} &
			\multicolumn{2}{|c|}{$\fourth$} \\
			\hline
			-- & $L^1$ error & Order  & $L^1$ error & Order  & $L^1$ error & Order   \\
			\hline
			16 & 1.74431e-01 &   --   & 3.75871e-03 &   --   & 6.22973e-05 &   --   \\
			32 & 4.32234e-02 & 2.0128 & 4.96314e-04 & 2.9209 & 3.53976e-06 & 4.1374 \\
			64 & 1.08096e-02 & 1.9995 & 6.26802e-05 & 2.9852 & 2.16246e-07 & 4.0329 \\
			128 & 2.69730e-03 & 2.0027 & 7.83468e-06 & 3.0001 & 1.37147e-08 & 3.9789 \\
			256 & 6.73101e-04 & 2.0026 & 9.78976e-07 & 3.0005 & 8.33461e-10 & 4.0405 \\
			512 & 1.68113e-04 & 2.0014 & 1.22665e-07 & 2.9965 & 6.11604e-11 & 3.7684 \\
			\hline
		\end{tabular}
		\caption{\nameref{test:1d_smooth}: $L^1$ errors and order of convergence for the $\second$, $\third$ and $\fourth$ schemes.}
		\label{table:order}
	\end{table}
	\begin{figure}[h]
		\begin{center}
			\subfigure{
				\includegraphics[width=3.4in, height=2.7in]{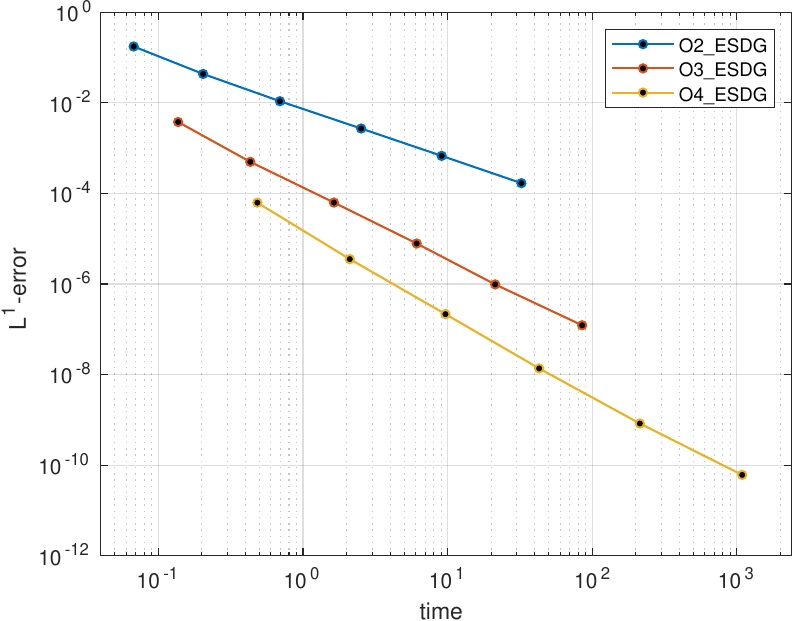}
			}
			\caption{\nameref{test:1d_smooth}: Plot of $L^1$-error with respect to CPU time. }
			\label{fig:cputime}
		\end{center}
	\end{figure}
	
	The resultant $ L^1$ errors of ion density and the order of convergence are presented in Table~\ref{table:order}. We observe that error norm of all schemes converge at the theoretical order of accuracy, $O(\Delta x^{k+1})$.  We have also presented ion density $L^1$-error versus CPU time for each scheme in Figure~\ref{fig:cputime}. We also notice that for a given error threshold, the $\fourth$ scheme is most economical, followed by the schemes $\third$ and $\second$.

	\reva{
	\subsubsection{Circularly polarized superluminal waves} \label{test:CP_smooth}
    In~\cite{Barkov2014}, a nonlinear harmonic solution for the two-fluid relativistic plasma system has been provided. We consider the same problem here which involves polarized superluminal waves in one dimension. The availability of the analytical solution allows us to perform an accuracy study of the schemes for the two-fluid system. The background solution consists of uniform and equal densities and pressures for electrons and positrons. Furthermore, the background state has vanishing velocities and electromagnetic fields. These vector quantities are perturbed by a small oscillation in such a way that the densities, pressures, and velocity vectors along the x-direction are not changed. The exact solution at time $t$ is given by 
	\begin{align*}
		(\rho_i, v_{x_i},v_{y_i}, v_{z_i},p_i )
		&=
		(1,0,V_0 \cos(kx-\omega t), -V_0 \sin(kx - \omega t), 1/4)
		\\
		(\rho_e, v_{x_e},v_{y_e}, v_{z_e},p_e )
		&=
		(\rho_i,-v_{x_i}, -v_{y_i}, -v_{z_i}, p_i)
		\\
		(B_x,B_y,B_z) &= (0,B_0 \cos(kx-\omega t), - B_0 \sin(kx-\omega t))
		\\
		(E_x,E_y,E_z) &= (0, E_0 \sin(kx-\omega t), E_0 \cos(kx-\omega t)).
	\end{align*}
	In the above, we set $k=2\pi$, $\omega=2\pi\sqrt{2} $, $V_0 = -1/\sqrt{5}$, $B_0=1$ and $E_0 = -\omega/k$. We note that the wave phase speed is $\sqrt{2}$. The specific heat ratio is set to $\gamma=4/3$. The ion charge to mass ratio is given by $r_i = 2\pi$ and the electron charge to mass ratio is given by $r_e=-r_i$.  The computations are performed on the one-dimensional domain $I=[0,1]$ with periodic boundary conditions. We integrate till time $t=1/\sqrt{2}$ using the schemes $\second$, $\third$ and $\fourth$.
	
	\begin{table}[h]
		\centering
		\begin{tabular}{|c|c|c|c|c|c|c|}
			\hline
			$N_x$ & \multicolumn{2}{|c}{$\second$ } &
			\multicolumn{2}{|c}{$\third$} &
			\multicolumn{2}{|c|}{$\fourth$} \\
			\hline
			-- & $L^1$ error & Order  & $L^1$ error & Order  & $L^1$ error & Order   \\
			\hline
			16  & 9.961068e-02 &    --  & 3.024582e-03 &   --   & 1.001248e-04 &   --   \\
			32  & 2.723106e-02 & 1.8710 & 3.719219e-04 & 3.0236 & 6.232853e-06 & 4.0057 \\
			64  & 7.005851e-03 & 1.9586 & 4.644056e-05 & 3.0015 & 3.887373e-07 & 4.0030 \\
			128 & 1.768490e-03 & 1.9860 & 5.799790e-06 & 3.0013 & 2.443314e-08 & 3.9918 \\
			256 & 4.435591e-04 & 1.9953 & 7.250341e-07 & 2.9998 & 1.572122e-09 & 3.9580 \\
			\hline
		\end{tabular}
		\caption{\nameref{test:CP_smooth}: $L^1$ errors and order of convergence for the $B_y$ variable using the $\second$, $\third$ and $\fourth$ schemes.}
		\label{table:CP_By_order}
	\end{table}
	\begin{table}[h]
		\centering
		\begin{tabular}{|c|c|c|c|c|c|c|}
			\hline
			$N_x$ & \multicolumn{2}{|c}{$\second$ } &
			\multicolumn{2}{|c}{$\third$} &
			\multicolumn{2}{|c|}{$\fourth$} \\
			\hline
			-- & $L^1$ error & Order  & $L^1$ error & Order  & $L^1$ error & Order   \\
			\hline
            16  & 1.168646e-01 &   --   & 3.094881e-03 &    --  & 1.360579e-04 &    --  \\
            32  & 3.240937e-02 & 1.8503 & 3.810366e-04 & 3.0218 & 8.499166e-06 & 4.0007 \\
            64  & 8.389545e-03 & 1.9497 & 4.725174e-05 & 3.0114 & 5.306491e-07 & 4.0014 \\
            128 & 2.121465e-03 & 1.9835 & 5.881873e-06 & 3.0060 & 3.364629e-08 & 3.9792 \\
            256 & 5.324741e-04 & 1.9942 & 7.339285e-07 & 3.0025 & 2.123211e-09 & 3.9861 \\
			\hline
		\end{tabular}
		\caption{\nameref{test:CP_smooth}: $L^1$ errors and order of convergence for the $E_y$ variable using the $\second$, $\third$ and $\fourth$ schemes.}
		\label{table:CP_Ey_order}
	\end{table}
	The resultant $L^1$ errors and convergence rates for the $B_y$ and $E_y$ are given in Table~\ref{table:CP_By_order} and \ref{table:CP_Ey_order}, respectively.  We observe that errors for all the schemes converge at the theoretical rate.}
	
	\subsubsection{Relativistic Brio-Wu test problem with finite plasma skin depth} \label{test:1d_brio}
	This is a Riemann problem whose initial data is as given in~\cite{Balsara2016} and we also scale the Maxwell's source terms as in this reference. The computational domain is $[-0.5,0.5]$ with Neumann boundary conditions. The initial discontinuity at $x=0$ separates the initial states,
	\begin{align*}
		\mathbf{W_L}=\begin{pmatrix}
			\rho_i \\ p_i \\ \rho_e \\ p_e \\ B_x \\ B_y
		\end{pmatrix}_{L}
		=
		\begin{pmatrix}
			0.5 \\ 0.5 \\ 0.5 \\ 0.5 \\ \sqrt{\pi} \\ \sqrt{4 \pi}
		\end{pmatrix},
		\qquad
		\mathbf{W_R}=\begin{pmatrix}
			\rho_i \\ p_i \\ \rho_e \\ p_e \\ B_x \\ B_y
		\end{pmatrix}_{R}
		=
		\begin{pmatrix}
			0.0625 \\ 0.05 \\ 0.0625 \\ 0.05 \\ \sqrt{\pi} \\ -\sqrt{4 \pi}
		\end{pmatrix}.
	\end{align*}
	All the remaining variables are set to zero. The charge to mass ratios has been taken as $r_i=-r_e=10^3/\sqrt{4 \pi}$; correspondingly, the plasma skin depth $d_p$ is given by $d_p=10^{-3}/\sqrt{\rho_i}$. We expect the solution to be close to the relativistic Magnetohydrodynamics solution whenever the zone size $\Delta x$ is larger than the plasma skin depth $d_p$. We take the specific heat ratio as $\gamma = 2$. The computations are performed till time $t = 0.4$ with $400$, $1600$ and $6400$ cells, and using $\second$, $\third$ and $\fourth$ schemes.
	
	To compare with the results of~\cite{Balsara2016}, in Figure~\ref{fig:brio_rho_o2}, we first plot the total-density ($\rho_i+\rho_e$) with $400$, $1600$ and $6400$ cells using the second-order scheme $\second$. To compare the solution with the RMHD solution, we have computed the ``RMHD" solution using $r_i=-r_e=10^4/\sqrt{4 \pi}$ with $1600$ cells using $\second$ scheme. For the zone size of $\Delta x = 0.0025$ (using $400$ cells), the plasma skin depth remains dominant in the entire domain; hence the solution is close to the RMHD solution. At the finer grids, $1600$ and $6400$, the zone size is smaller than the plasma skin depth $d_p$; hence, we observe the two-fluid dispersive effects in the waves. The results for the different schemes, at 400 cells, are compared in Figure~\ref{fig:brio_rho_all}.  We observe that on the same mesh, higher-order schemes capture more dispersive effects than the second-order scheme $\second$. The evolution of the total entropy in the domain is shown in Figure~\ref{fig:brio_ent_tot}; we observe that the second-order scheme is the most entropy dissipative, followed by the third and fourth-order schemes. \reva{In Figure~\ref{fig:brio_hireso}, we have plotted the total density for the fourth order scheme on highly resolved meshes of $50000$ and $100000$ cells and compared them with the total density for the second order scheme on $6400$ cells. A zoomed profile around point $x=-0.1$ is also plotted in Figure~\ref{fig:brio_hireso_zoomed}. We observe that the oscillations shown in Figure~\ref{fig:brio_rho_o2} are consistent with the high-resolution results of the fourth order scheme in Figure~\ref{fig:brio_hireso}.}
	
	\begin{figure}[!htbp]
		\begin{center}
			\subfigure[Plot of total density ($\rho_i+\rho_e$) using $\second$ scheme]{
				\includegraphics[width=2.2in, height=1.6in]{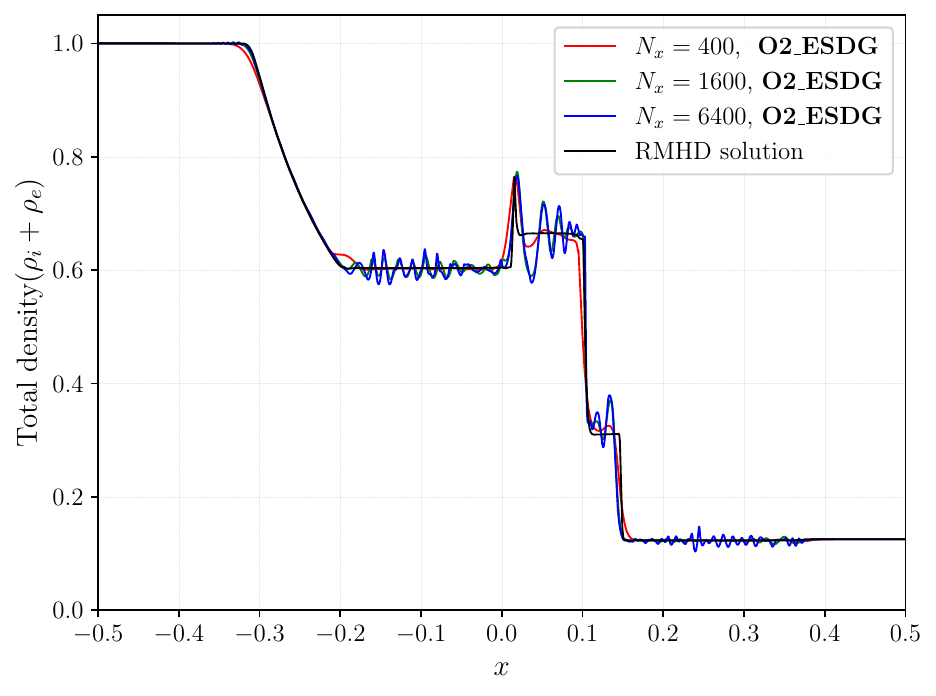}
				\label{fig:brio_rho_o2}}
			\subfigure[Plot of total density ($\rho_i+\rho_e$) with $N_x$=400 using $\second$, $\third$ and $\fourth$ schemes]{
				\includegraphics[width=2.2in, height=1.6in]{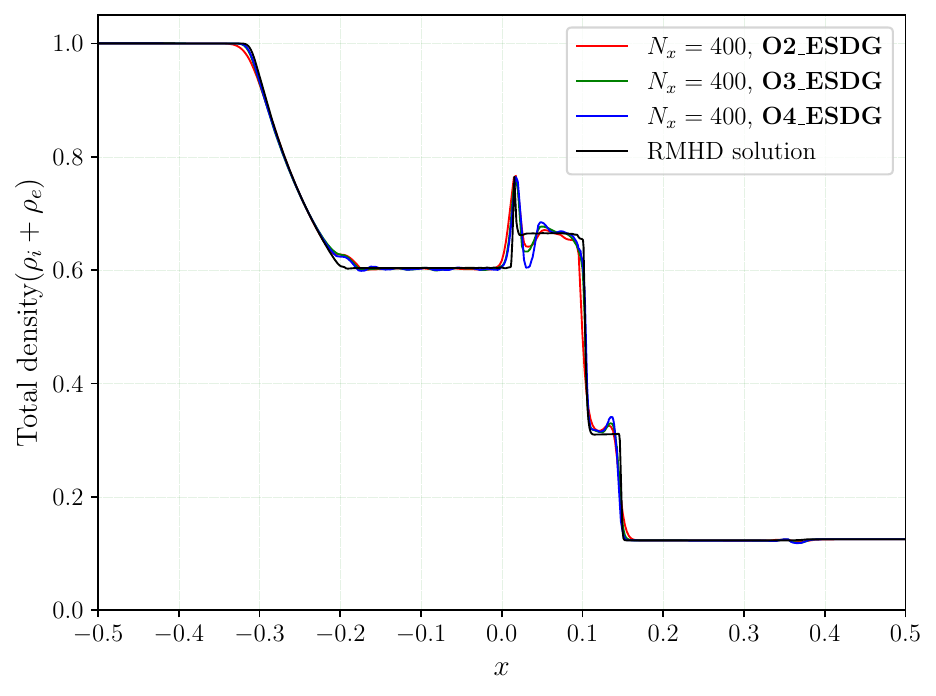}
				\label{fig:brio_rho_all}}
			\subfigure[Evolution of total entropy with $N_x$=400 cells for $\second$, $\third$ and $\fourth$ schemes]{
				\includegraphics[width=2.2in, height=1.6in]{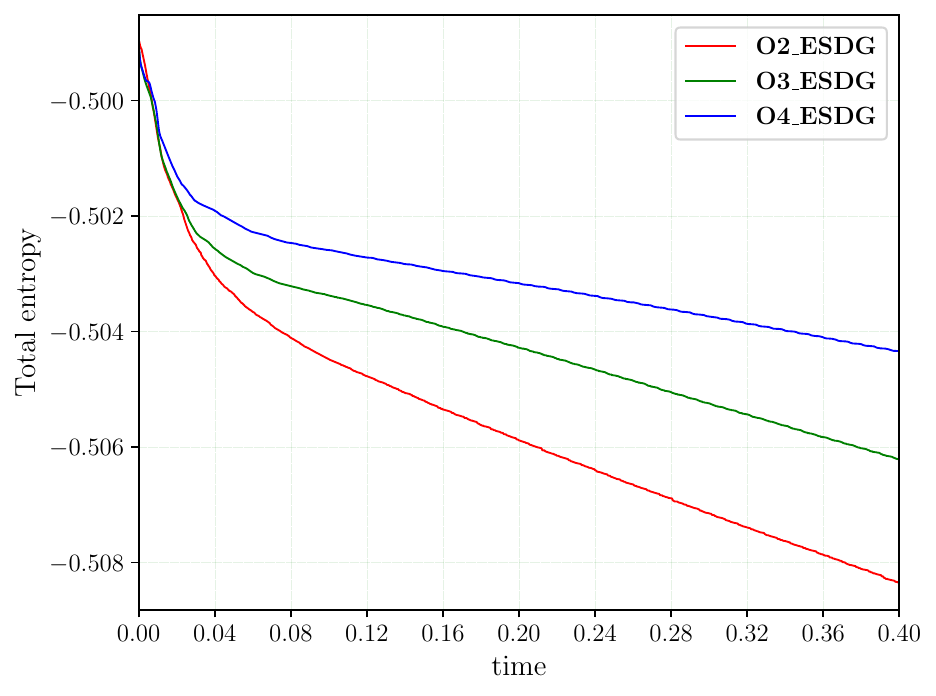}
				\label{fig:brio_ent_tot}}
            \reva{
			\subfigure[Plots of total density ($\rho_i+\rho_e$) at high resolutions]{
				\includegraphics[width=2.2in, height=1.6in]{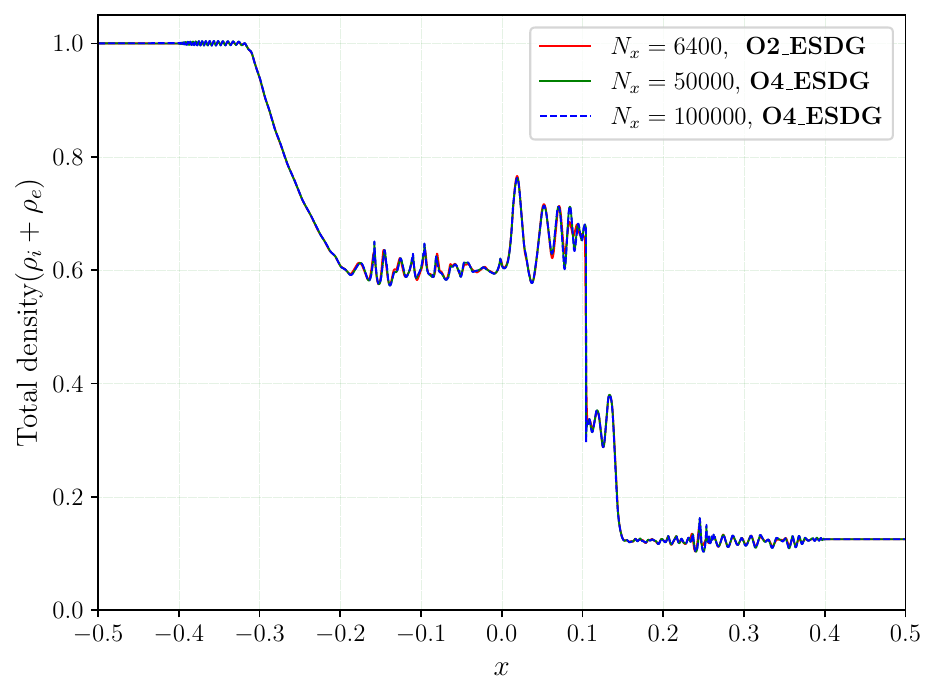}
				\label{fig:brio_hireso}}
			\subfigure[Plots of total density ($\rho_i+\rho_e$) at high resolutions (Zoomed view)]{
				\includegraphics[width=2.2in, height=1.6in]{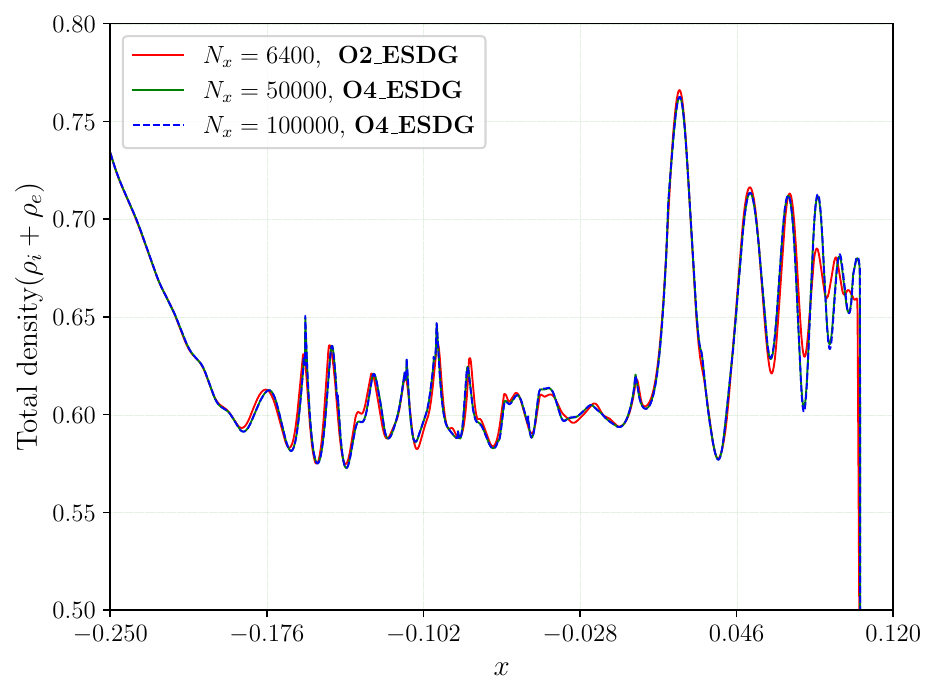}
				\label{fig:brio_hireso_zoomed}}
			}
   \caption{\nameref{test:1d_brio}: Plots for the total density and evolution of total entropy for the $\second$, $\third$ and $\fourth$ schemes.}
			\label{fig:brio}
		\end{center}
	\end{figure}
	
	\subsubsection{Self-similar current sheet with finite resistivity} \label{test:1d_sheet}
	The test case is a two-fluid extension of the relativistic magneto-hydrodynamics (RMHD) test considered in~\cite{Komissarov2007}; the two-fluid extension is described in~\cite{Amano2016}. Initially, only the $y$-component of the magnetic field has non-zero variation and the evolution of $B_y$ is governed by the diffusion equation 
	\begin{equation} \label{current_diff}
		\dfrac{\partial B_y}{\partial t} - D \dfrac{\partial^2 B_y}{\partial x^2} =0.
	\end{equation}
	The diffusion coefficient $D$ is related to the resistivity $\eta$ by the equation $D=\eta c^2$. In the resistive RMHD regime~\cite{Komissarov2007}, the expression for an exact self-similar solution of the partial differential equation is given by
	\begin{equation*}
		B_y(x,t) = B_0 \textit{  erf}\left(\dfrac{x}{2 \sqrt{Dt}} \right),
	\end{equation*}
	where \textit{erf} is the error function. The initial profile for the $B_y$-component is obtained by setting the time $t=1$, $B_0=1$, and $\eta =0.01$. The computational domain is $[-1.5,1.5]$ with Neumann boundary conditions. The charge-to-mass ratios are taken as $r_i=-r_e=10^3$. The ion and electron densities are equal to 0.5, and the ion and electron pressures are set to 25. The $z$-velocities for the ion and electron fluids are given by
	\begin{equation}
		v_{z_i} = - v_{z_e}
		=
		\dfrac{B_0}{r_i \rho_i \sqrt{\pi D}} \exp\left(-\dfrac{x^2}{4D}\right).
	\end{equation}
	All remaining variables are set to zero. We take adiabatic index $\gamma=4/3$, and integrate from initial time $t=1$ to final time $t=9$ using the schemes $\second$, $\third$ and $\fourth$. 
	\begin{figure}[!htbp]
		
		\begin{center}
			\subfigure{
				\includegraphics[width=2.5in, height=1.8in]{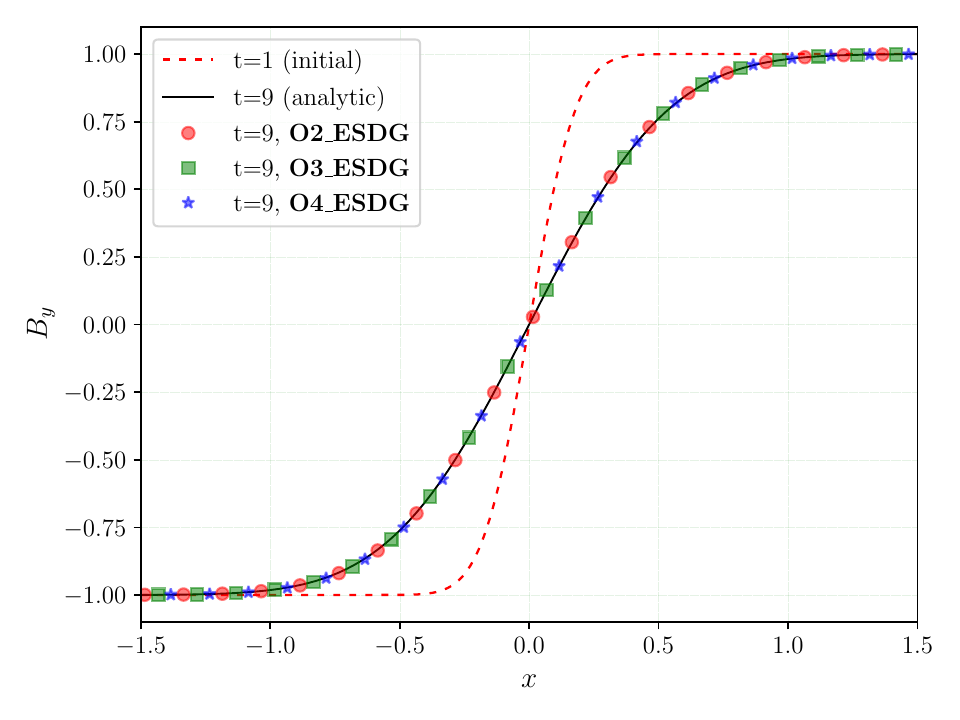}
			}
			\caption{\nameref{test:1d_sheet}: Comparison of the $B_y$ profiles using $\second$, $\third$ and $\fourth$ schemes.}
			\label{fig:sheet}
		\end{center}
	\end{figure}
	The numerical solution using 400 cells is shown in Figure~\ref{fig:sheet}. The figure also includes the analytical solutions at time $t=1$ and $t=9$. We observe that all the proposed schemes capture the analytical RMHD solution accurately.
	
	\subsection{Two-dimensional test cases}
	As the second-order scheme is quite diffusive compared to the third and fourth-order schemes unless very large grids are used which is more expensive in multi-dimensions, we only present results obtained using the third ($\third$) and fourth-order ($\fourth$) schemes. 
	
	\subsubsection{Relativistic Orzag-Tang test problem} \label{test:2d_ot}
	The test was first proposed for magnetohydrodynamics in~\cite{Orszag1979} and a two-fluid relativistic extension was considered in~\cite{Balsara2016}. We follow \cite{Balsara2016} and consider the computational domain of $[0,1] \times [0,1]$ with the periodic boundary conditions. The initial data is given by
	\begin{align*}
		\begin{pmatrix*}[c]
			\rho_i \\ v_{x_i} \\ v_{y_i} \\ p_i
		\end{pmatrix*} =
		\begin{pmatrix*}[c]
			\rho_e \\v_{x_e} \\ v_{y_e} \\ p_e
		\end{pmatrix*} =
		\begin{pmatrix*}[c]
			\frac{25}{72 \pi} \\ - \frac{\sin(2 \pi y)}{2} \\ \frac{\sin(2 \pi x)}{2} \\ \frac{5}{24 \pi}
		\end{pmatrix*}, \qquad
		\begin{pmatrix*}[c]
			B_x \\ B_y
		\end{pmatrix*} =
		\begin{pmatrix*}[c]
			-\sin(2 \pi y) \\  \sin (4 \pi x)
		\end{pmatrix*}
	\end{align*}
	The electric field components are given by the vector cross-product $-\vel_i \times \bm{B}$. All the remaining variables are set to zero. The charge-to-mass ratios are set to $r_i$=$-r_e=10^3/\sqrt{4 \pi}$  and the adiabatic index is assumed to be $\gamma = 5/3$. We run the simulation till the final time $t = 1$ using $200\times 200$ cells. 
	
	\begin{figure}[!htbp]
		\begin{center}
			\subfigure[Plot of total density ($\rho_i +\rho_e$)]{
				\includegraphics[width=2.2in, height=1.9in]{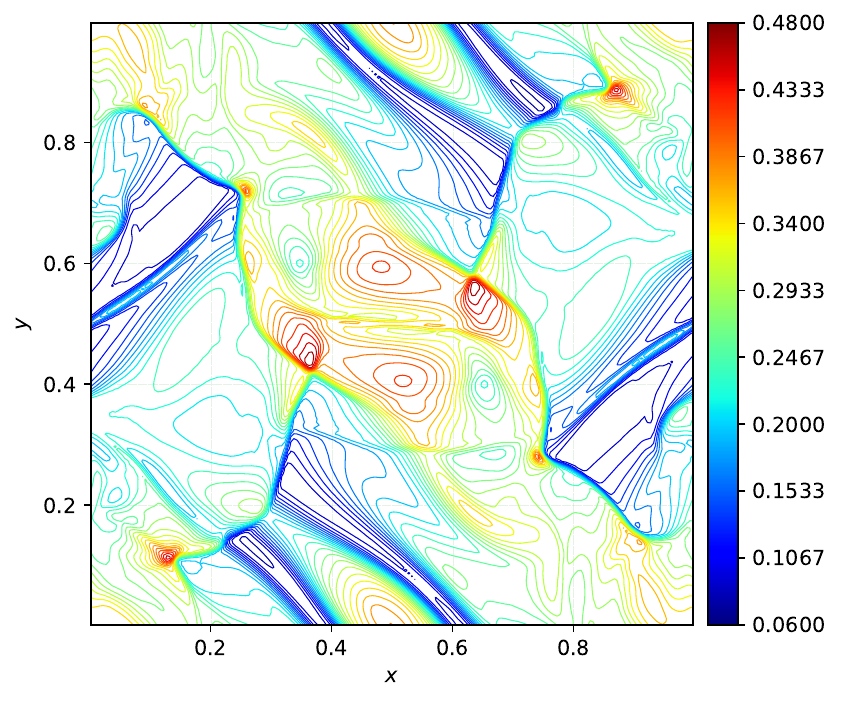}
				\label{fig:ot_o3_totrho}}
			\subfigure[Plot of total pressure ($p_{i} + p_e$)]{
				\includegraphics[width=2.2in, height=1.9in]{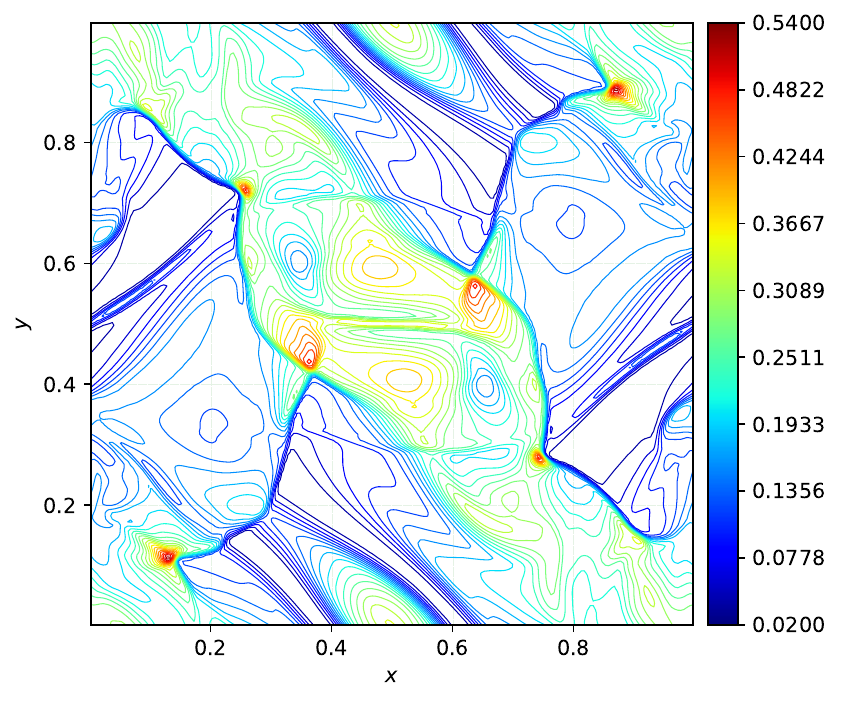}
				\label{fig:ot_o3_totp}}
			\subfigure[Plot of ion Lorentz factor ($\Gamma_i$)]{
				\includegraphics[width=2.2in, height=1.9in]{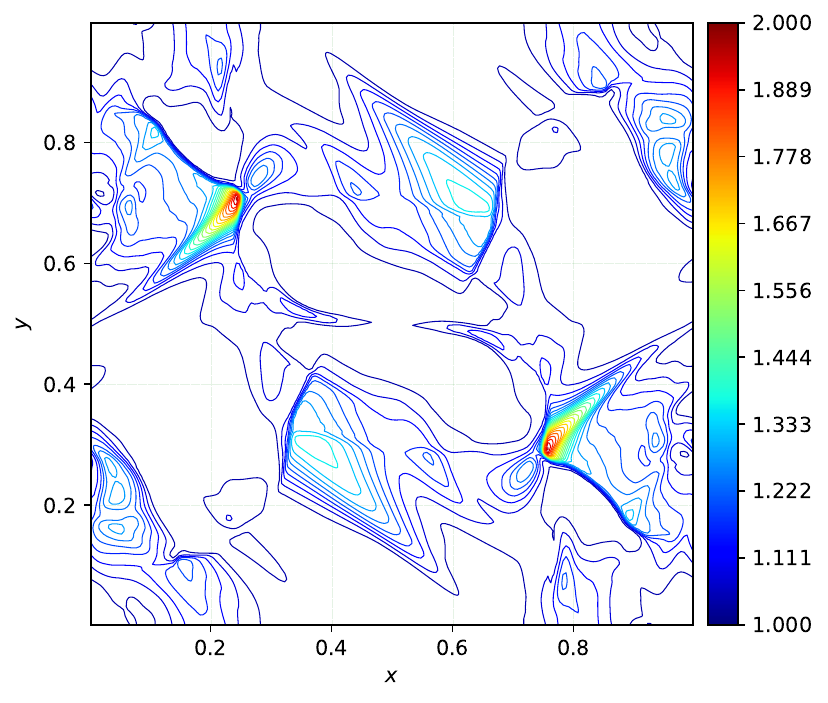}
				\label{fig:ot_o3_lorentz}}
			\subfigure[Plot of magnitude of the magnetic field ($\dfrac{|\bm{B}|^2}{2}$)]{
				\includegraphics[width=2.2in, height=1.9in]{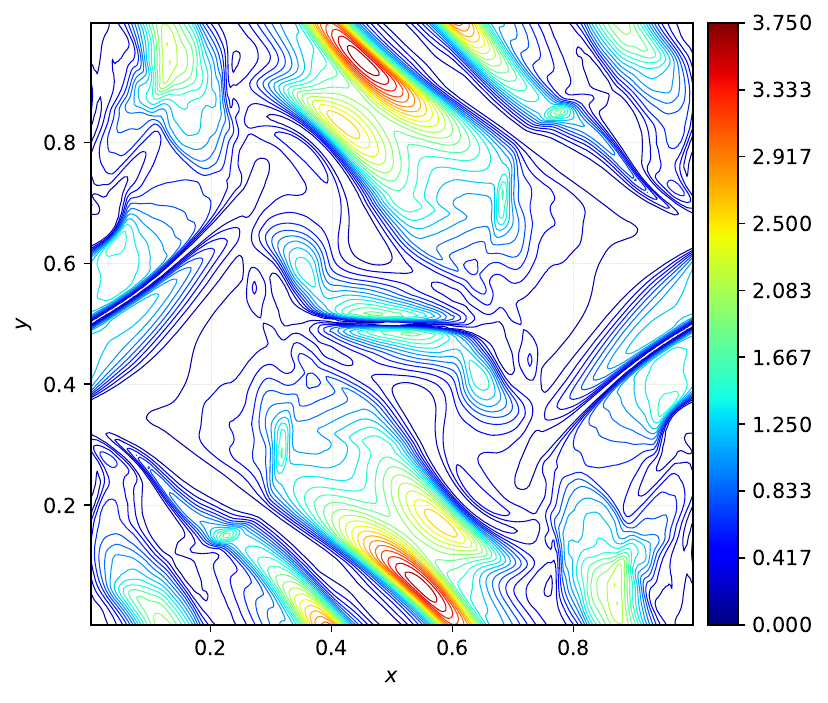}
				\label{fig:ot_o3_magBby2}}
			\caption{\nameref{test:2d_ot}: Plots of total density, total pressure, ion Lorentz factor, and magnitude of magnetic field using the $\third$ scheme with $200\times200$ cells. }
			\label{fig:ot_o3}
		\end{center}
	\end{figure}
	
	\begin{figure}[!htbp]
		\begin{center}
			\subfigure[Plot of total density ($\rho_i +\rho_e$)]{
				\includegraphics[width=2.2in, height=1.9in]{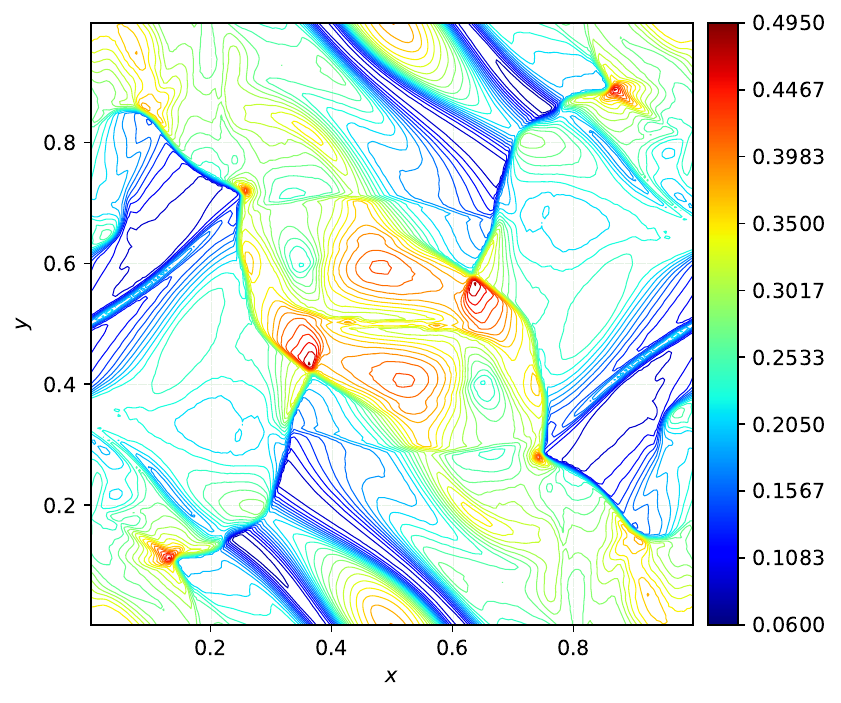}
				\label{fig:ot_o4_totrho}}
			\subfigure[Plot of total pressure ($p_{i} + p_e$)]{
				\includegraphics[width=2.2in, height=1.9in]{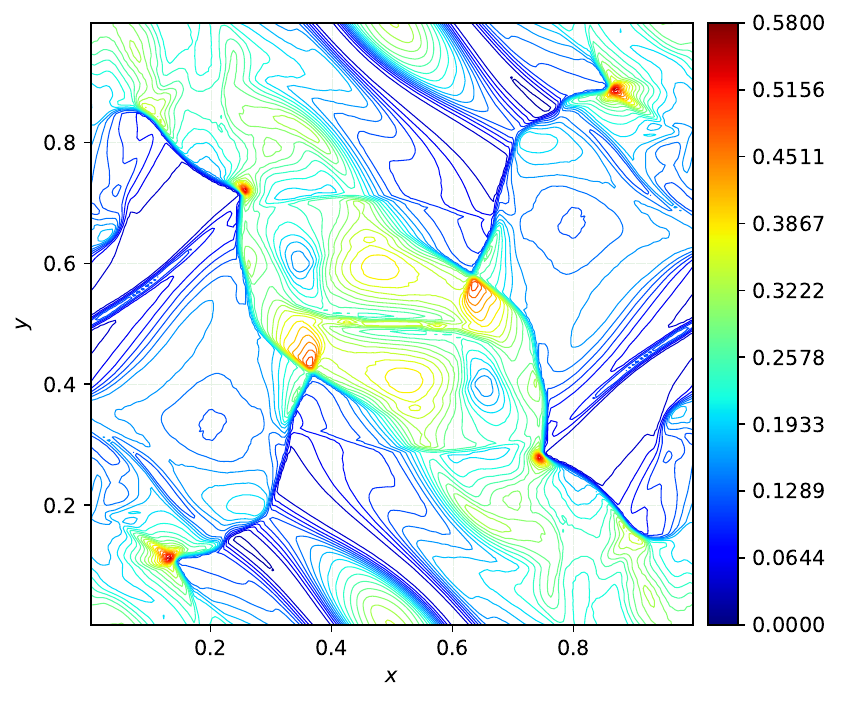}
				\label{fig:ot_o4_totp}}
			\subfigure[Plot of ion Lorentz factor ($\Gamma_i$)]{
				\includegraphics[width=2.2in, height=1.9in]{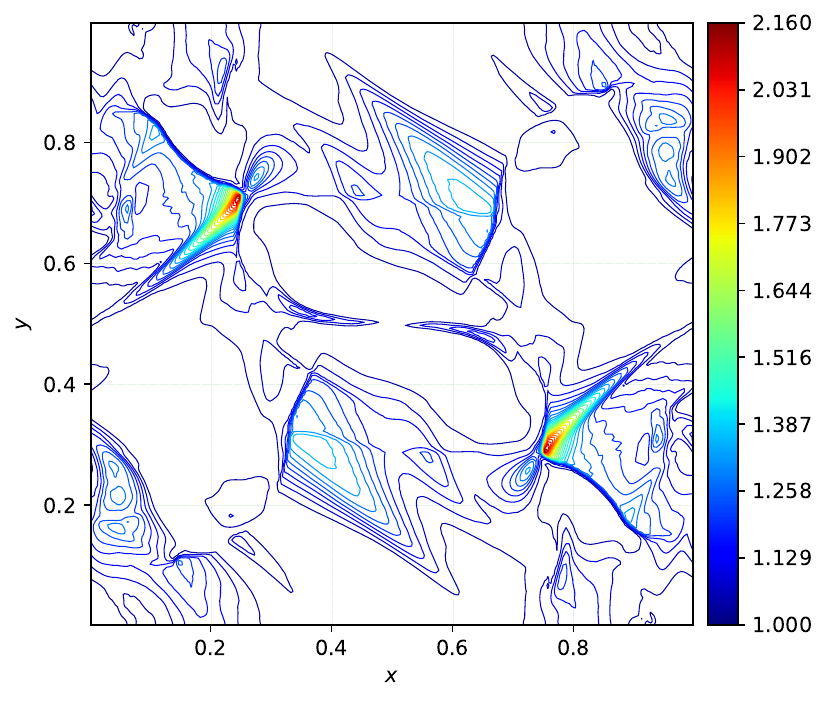}
				\label{fig:ot_o4_lorentz}}
			\subfigure[Plot of magnitude of the magnetic field ($\dfrac{|\bm{B}|^2}{2}$)]{
				\includegraphics[width=2.2in, height=1.9in]{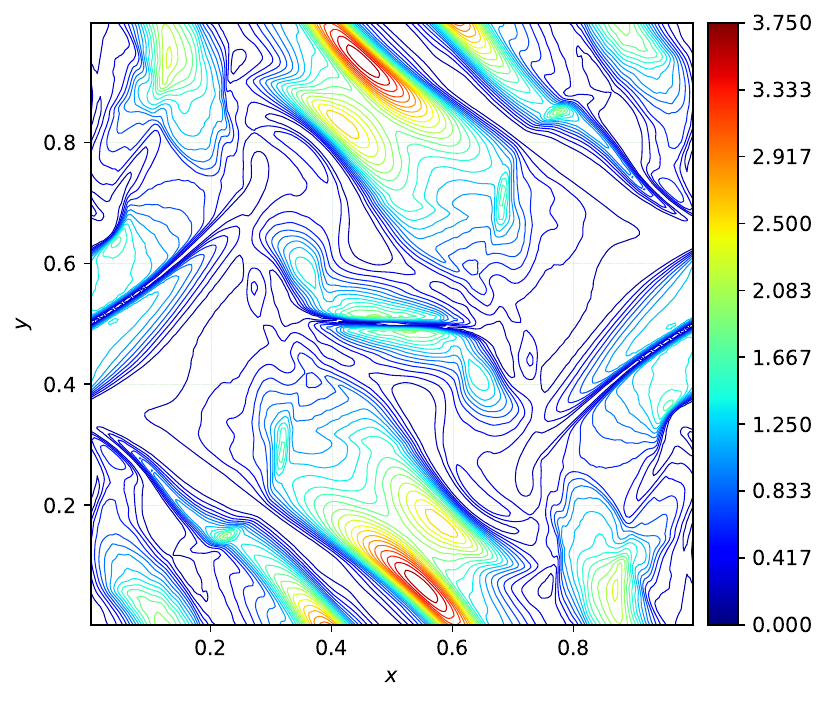}
				\label{fig:ot_o4_magBby2}}
			\caption{\nameref{test:2d_ot}: Plots of total density, total pressure, ion Lorentz factor, and magnitude of magnetic field using the $\fourth$ scheme with $200\times200$ cells.}
			\label{fig:ot_o4}
		\end{center}
	\end{figure}
	Numerical results for the $\third$ and $\fourth$ schemes are shown in Figures ~\ref{fig:ot_o3} and \ref{fig:ot_o4}, respectively. We plot the total density, total pressure, ion Lorentz factor, and magnitude of the magnetic field. We observe that both schemes are able to capture the  two-dimensional  discontinuities. Furthermore, the solutions are highly refined and comparable to those presented in~\cite{Balsara2016}. 	
	\begin{figure}[!htbp]
		\begin{center}
			\includegraphics[width=2.5in, height=1.8in]{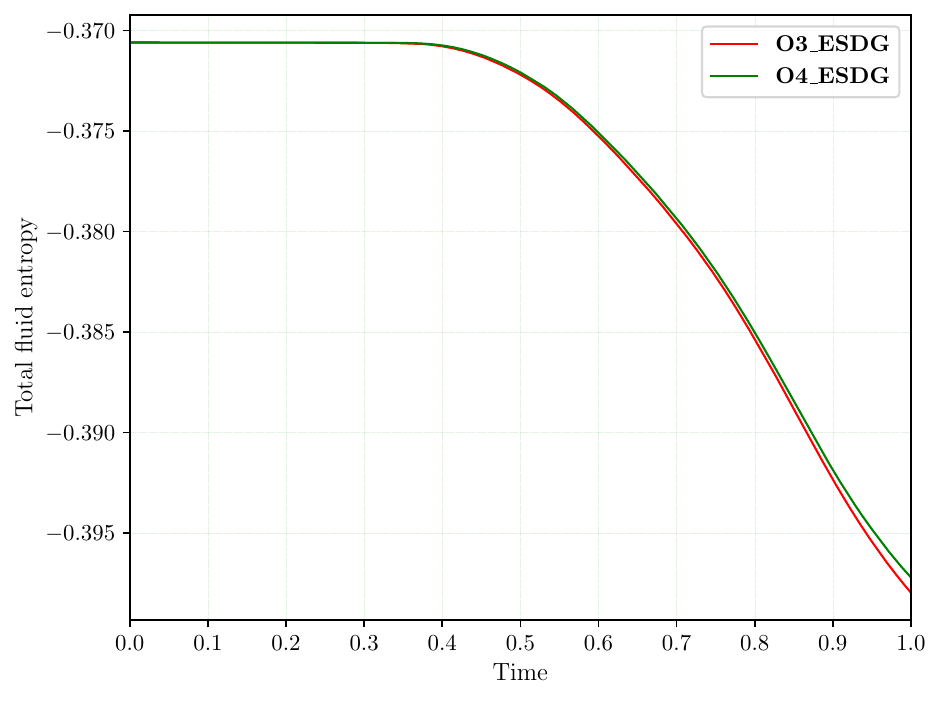}
			\caption{\nameref{test:2d_ot}: Total entropy evolution of the $\third$ and $\fourth$ schemes with $200\times200$ cells.}
			\label{fig:ot_entropy}
		\end{center}
	\end{figure}
	
	In Figure \ref{fig:ot_entropy}, we plot the time evolution of the total fluid entropy  for the  $\third$ and $\fourth$ schemes. Initially, we do not see significant decay in the entropies as discontinuities are not present in the solution. However, as the solutions develop discontinuities, we see sharp decay in the entropies after time $t=0.4$. We also observe that the third-order scheme decays slightly more entropy than the fourth-order scheme; however, the difference is very small.

	\subsubsection{Relativistic two-fluid blast problem} \label{test:2d_blast}
	We consider a two-fluid relativistic strong cylindrical blast problem from~\cite{Amano2016} which is a relativistic extension of the strong cylindrical explosion problem for MHD described in~\cite{Komissarov1999}. We consider a square computational domain of size $[-6,6]\times[-6,6]$ with the Neumann boundary conditions. To describe the initial conditions appropriately, let us denote $\rho_{in}=10^{-2}$, $p_{in}=1$, $\rho_{out}=10^{-4}$ and $p_{out}=5 \times 10^{-4}$. We also define radial distance $r$ from the origin $(0,0)$ by $r=\sqrt{x^2+y^2}$. In the interior of the disc of radius $0.8$, i.e., $r<0.8$, we set $\rho_i=\rho_e=0.5 \times \rho_{in}$ and $p_i=p_e=p_{in}$, while, on the outside of the disc of radius 1, i.e., $r>1$, we set $\rho_i=\rho_e=0.5 \times \rho_{out}$ and $p_i=p_e=p_{out}$. In the intermediate region, i.e., $0.8 \le r \le 1$, the densities and pressures are defined by a linear profile such that the densities and pressures match at the values $r=0.8$ and $r=1$. The $x$-component of the magnetic field is defined as $B_x=B_0$, where $B_0$ is a parameter that describes the magnetized strength of the medium. We set all remaining variables to zero. The charge-to-mass ratios are given by $r_i =-r_e=10^3$ and we set $\gamma=4/3$. Simulations are performed on a grid of $200 \times 200$ cells using $\third$ and $\fourth$ schemes and we evolve the solution till the final time $t=4$. 
	
	We first consider the weakly magnetized medium by taking $B_0=0.1$.  In Figures~\ref{fig:blast_o3_0_1} and \ref{fig:blast_o4_0_1}, we have presented results for $\third$ and $\fourth$ schemes, respectively.  We have plotted $\log(\rho_i+\rho_e)$, $\log(p_i+p_e)$, ion-Lorentz factor $\Gamma_i$ and magnitude of the magnetic field $\dfrac{|\bm{B}|^2}{2}$. Both schemes can capture outward-moving shock wave in great detail, and the results are consistent with those presented in \cite{Balsara2016}.
	
	Next, to observe the effect of the higher magnetic field, we consider stronger magnetized medium by taking $B_0=1.0$. The numerical results for $\third$ and $\fourth$ schemes are plotted in Figures \ref{fig:blast_o3_1_0}, and \ref{fig:blast_o4_1_0}, respectively. In this case, we observe that the solution profile is changed significantly from the weakly magnetized case. The solution is more aligned along the $x$-axis. Both the schemes produced highly detailed solutions, with the $\fourth$ scheme having more details. 
	
	\begin{figure}[!htbp]
		\begin{center}
			\subfigure[Plot of $\log(\rho_i+\rho_e)$]{
				\includegraphics[width=2.2in, height=1.9in]{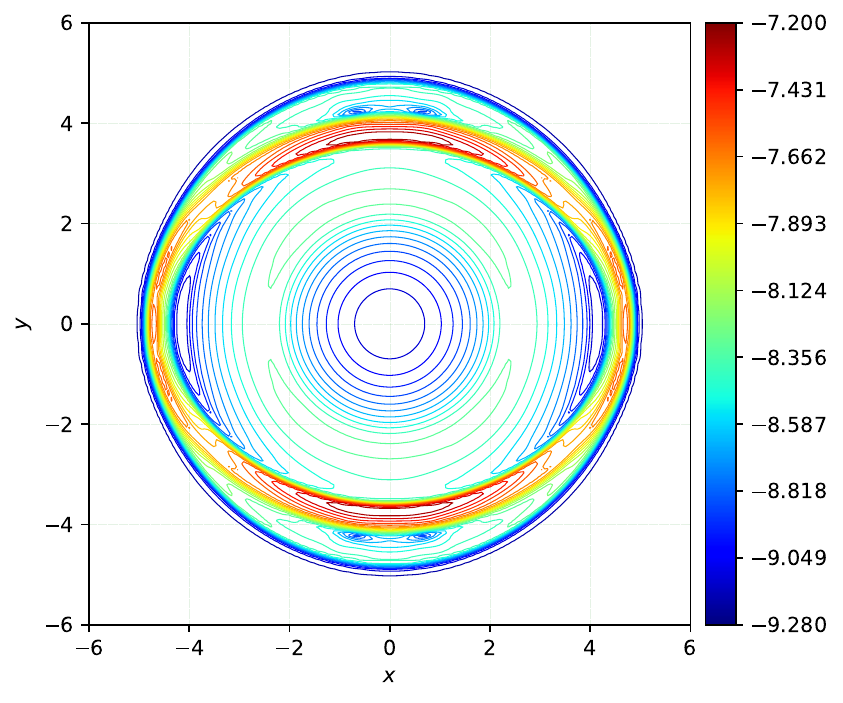}
			}
			\subfigure[Plot of $\log(p_i+p_e)$]{
				\includegraphics[width=2.2in, height=1.9in]{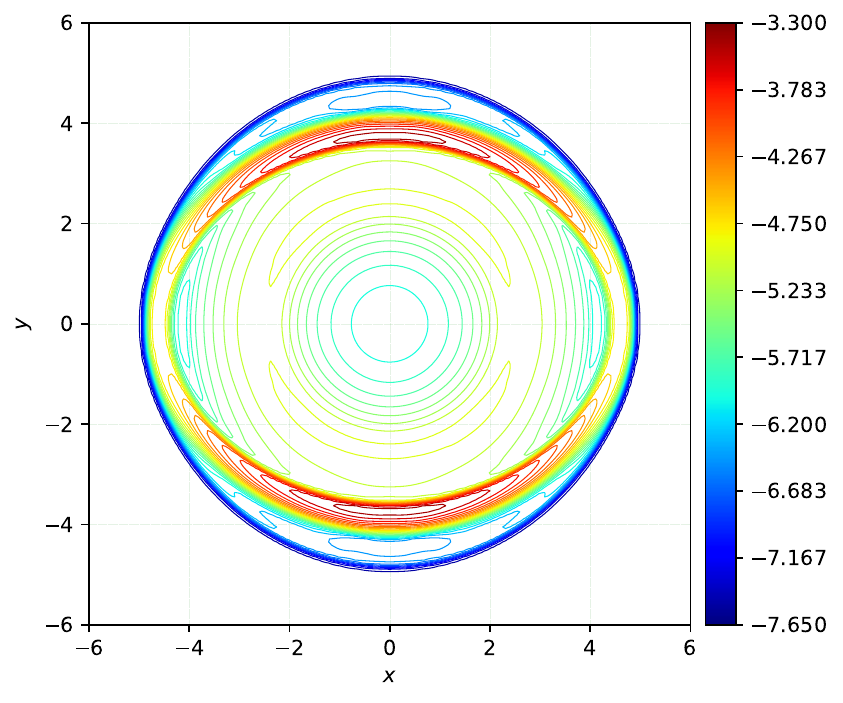}
			}
			\subfigure[Plot of ion Lorentz factor ($\Gamma_i$)]{
				\includegraphics[width=2.2in, height=1.9in]{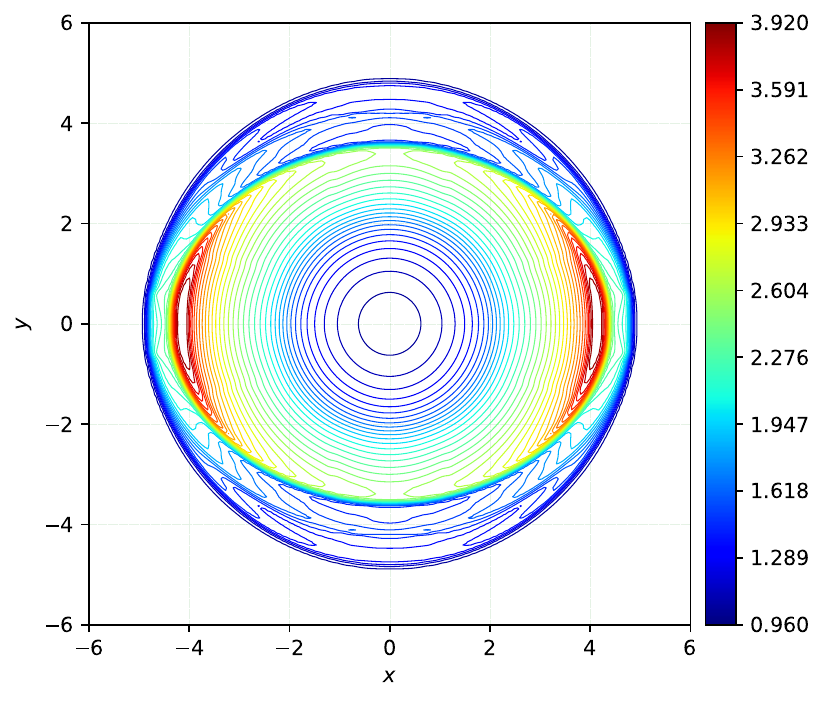}
			}
			\subfigure[Plot of magnitude of the magnetic field ($\dfrac{|\bm{B}|^2}{2}$)]{
				\includegraphics[width=2.2in, height=1.9in]{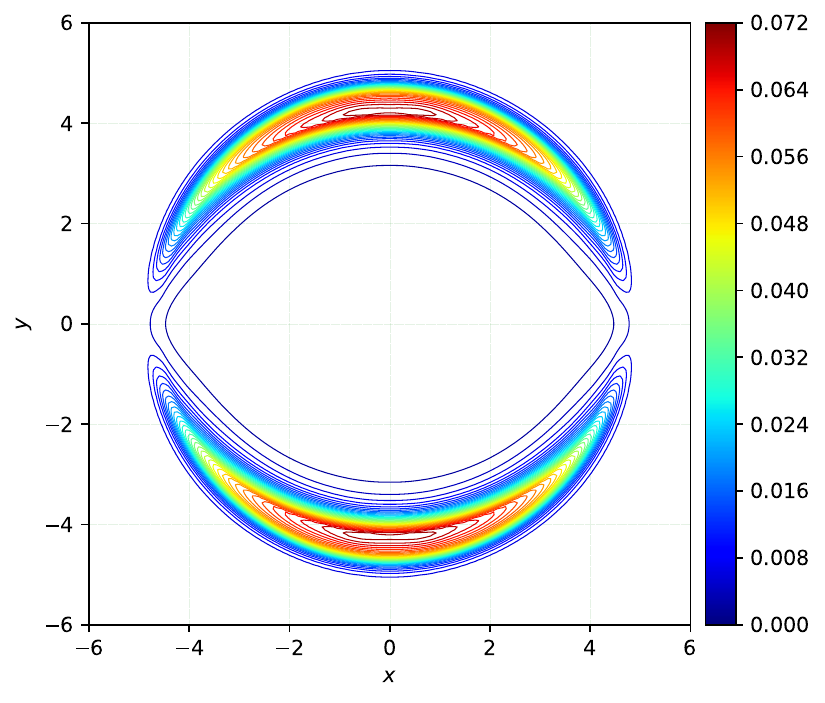}
			}
			\caption{\nameref{test:2d_blast}: Plots for the weakly magnetized medium $B_0=0.1$, using  $\third$ scheme with $200\times200$ cells.}
			\label{fig:blast_o3_0_1}
		\end{center}
	\end{figure}
	
	\begin{figure}[!htbp]
		\begin{center}
			\subfigure[Plot of $\log(\rho_i+\rho_e)$]{
				\includegraphics[width=2.2in, height=1.9in]{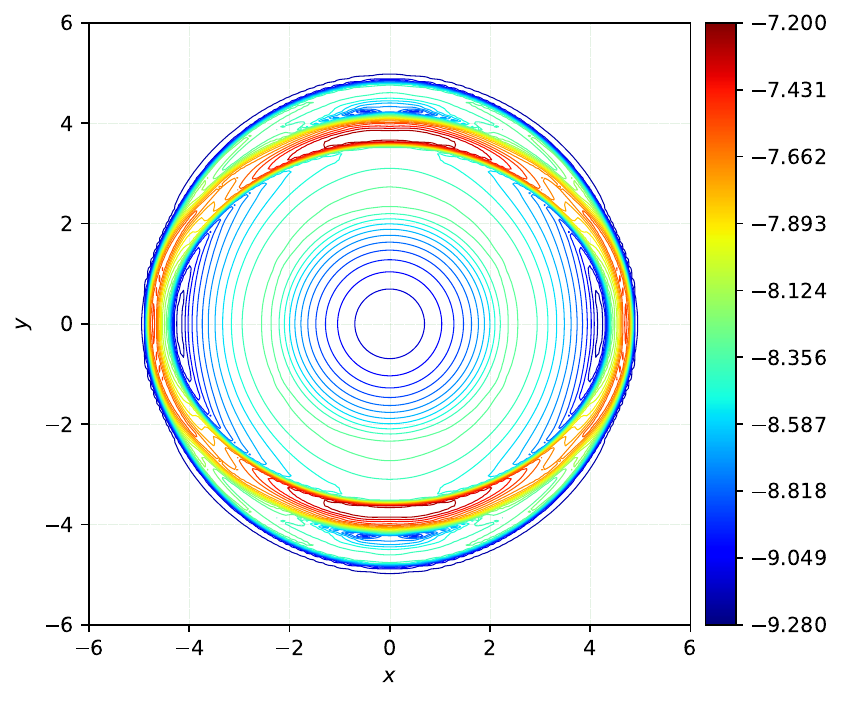}
			}
			\subfigure[Plot of $\log(p_i+p_e)$]{
				\includegraphics[width=2.2in, height=1.9in]{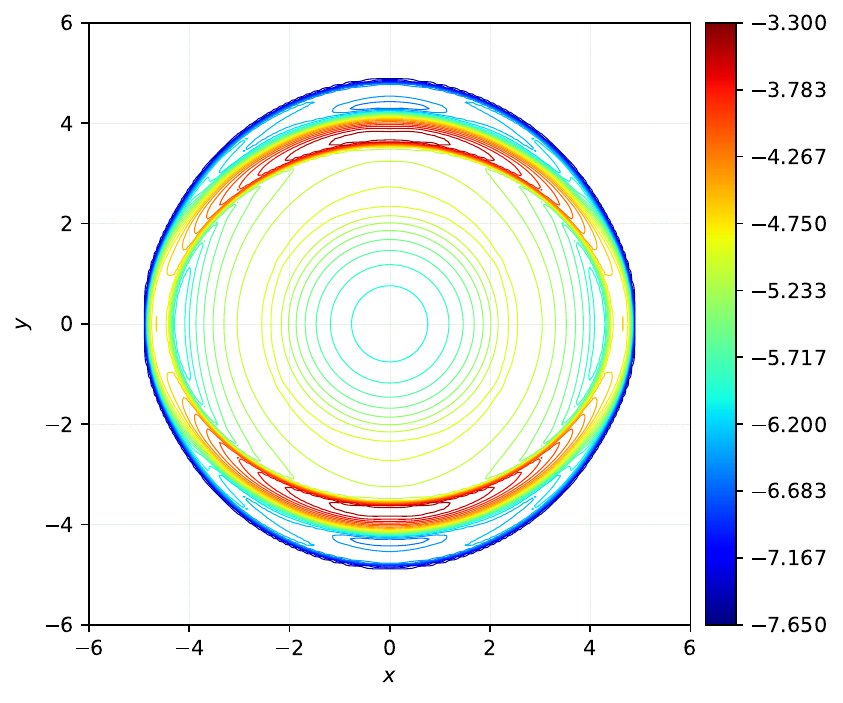}
			}
			\subfigure[Plot of ion Lorentz factor ($\Gamma_i$)]{
				\includegraphics[width=2.2in, height=1.9in]{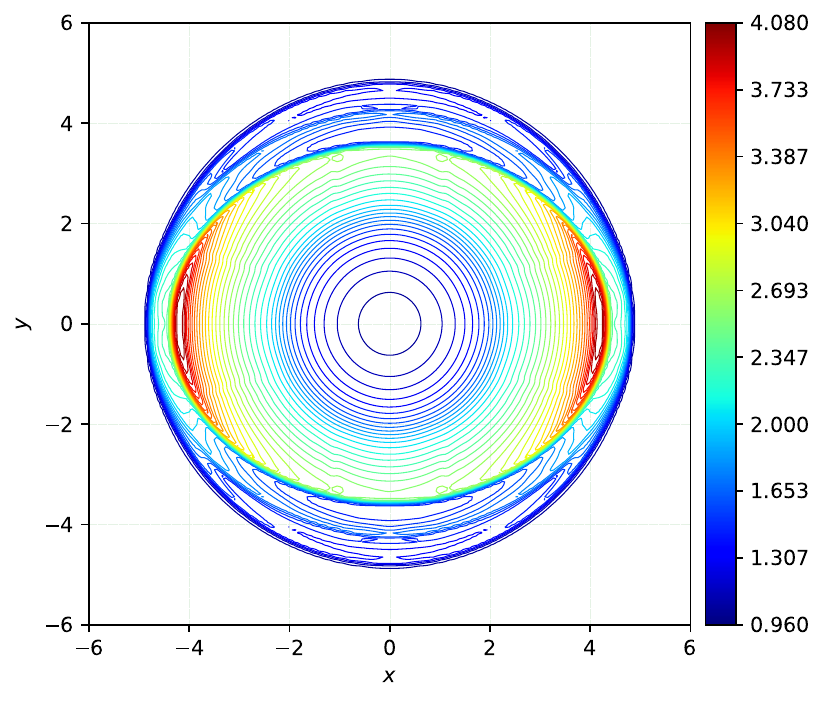}
			}
			\subfigure[Plot of magnitude of the magnetic field ($\dfrac{|\bm{B}|^2}{2}$)]{
				\includegraphics[width=2.2in, height=1.9in]{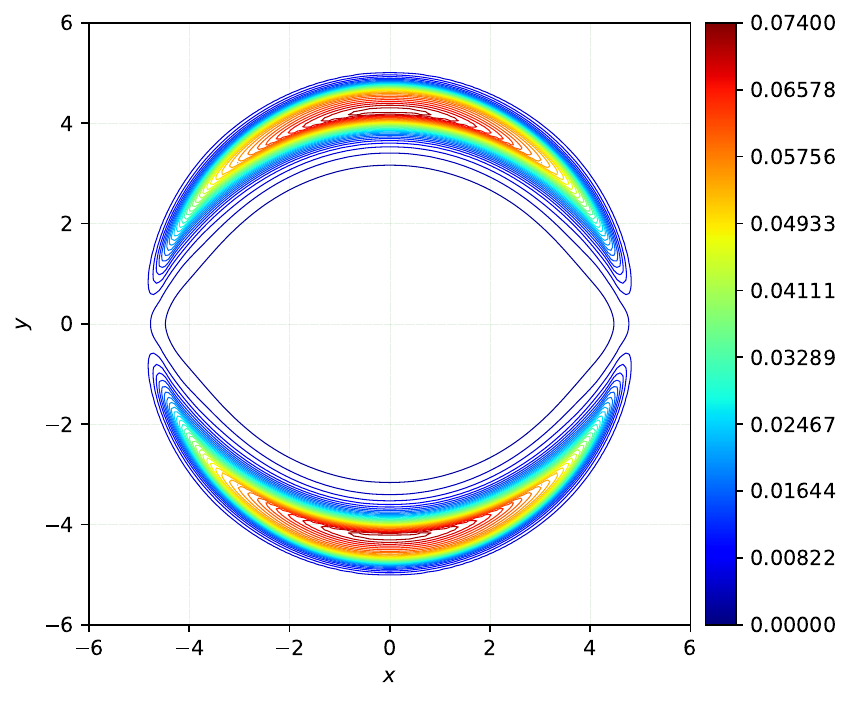}
			}
			\caption{\nameref{test:2d_blast}: Plots for the weakly magnetized medium $B_0=0.1$, using scheme $\fourth$ with $200\times200$ cells. }
			\label{fig:blast_o4_0_1}
		\end{center}
	\end{figure}
	
	\begin{figure}[!htbp]
		\begin{center}
			\subfigure[Plot of $\log(\rho_i+\rho_e)$]{
				\includegraphics[width=2.2in, height=1.9in]{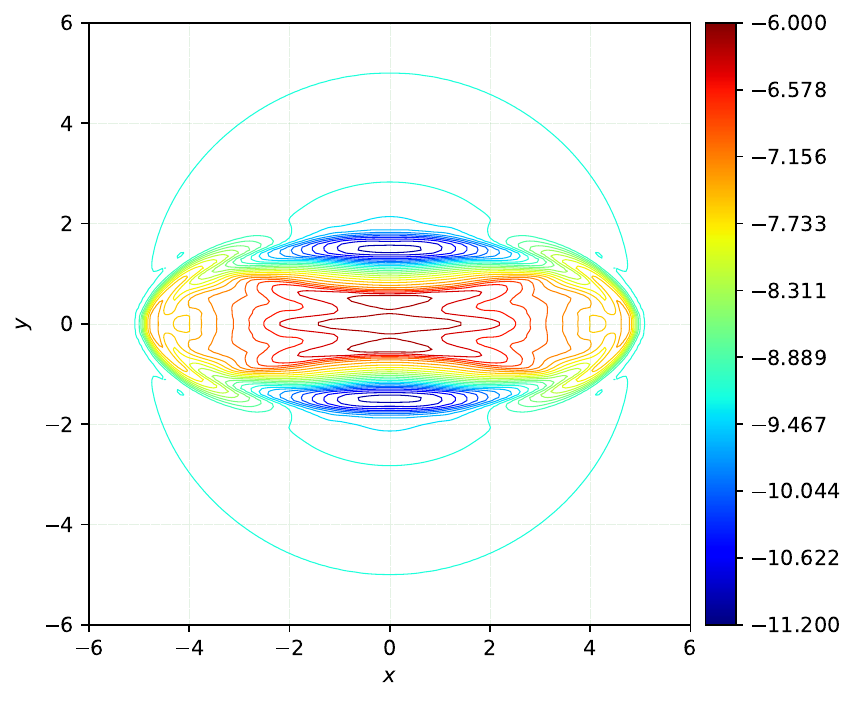}
			}
			\subfigure[Plot of $\log(p_i+p_e)$]{
				\includegraphics[width=2.2in, height=1.9in]{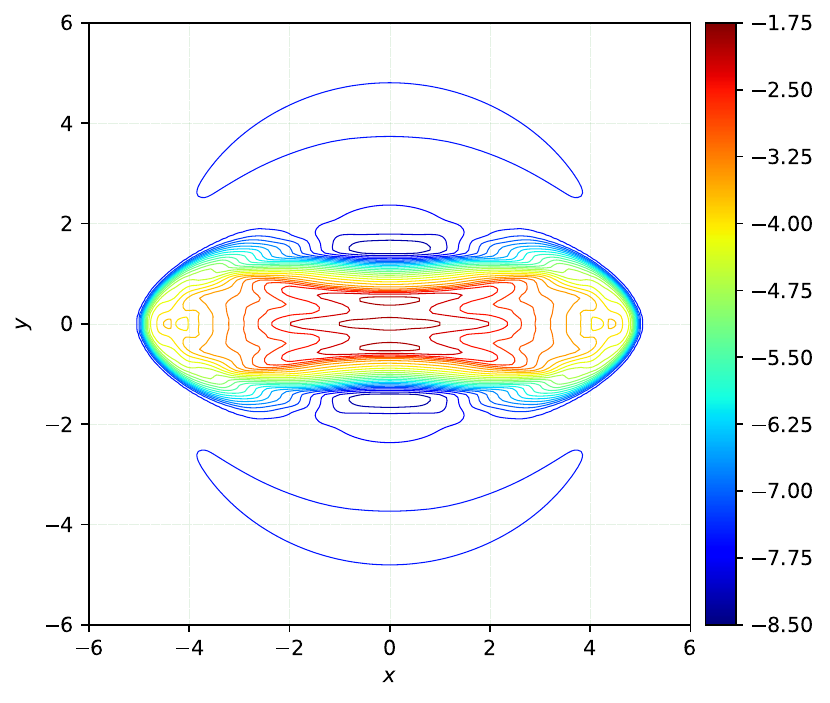}
			}
			\subfigure[Plot of ion Lorentz factor ($\Gamma_i$)]{
				\includegraphics[width=2.2in, height=1.9in]{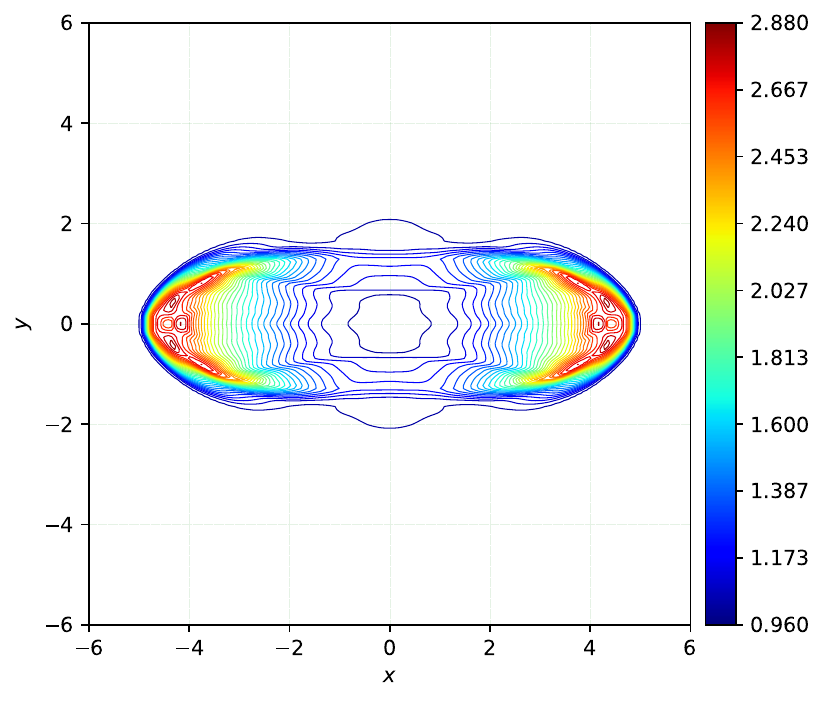}
			}
			\subfigure[Plot of magnitude of the magnetic field ($\dfrac{|\bm{B}|^2}{2}$)]{
				\includegraphics[width=2.2in, height=1.9in]{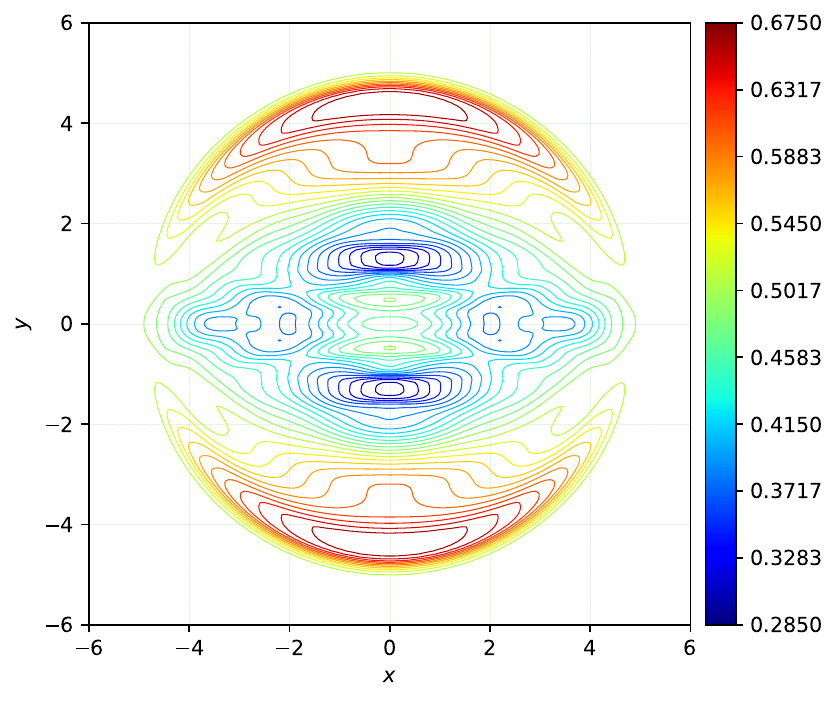}
			}
			\caption{\nameref{test:2d_blast}: Plots for the strongly magnetized medium $B_0=1$, using $\third$ scheme with $200\times200$ cells. }
			\label{fig:blast_o3_1_0}
		\end{center}
	\end{figure}
	
	\begin{figure}[!htbp]
		\begin{center}
			\subfigure[Plot of $\log(\rho_i+\rho_e)$]{
				\includegraphics[width=2.2in, height=1.9in]{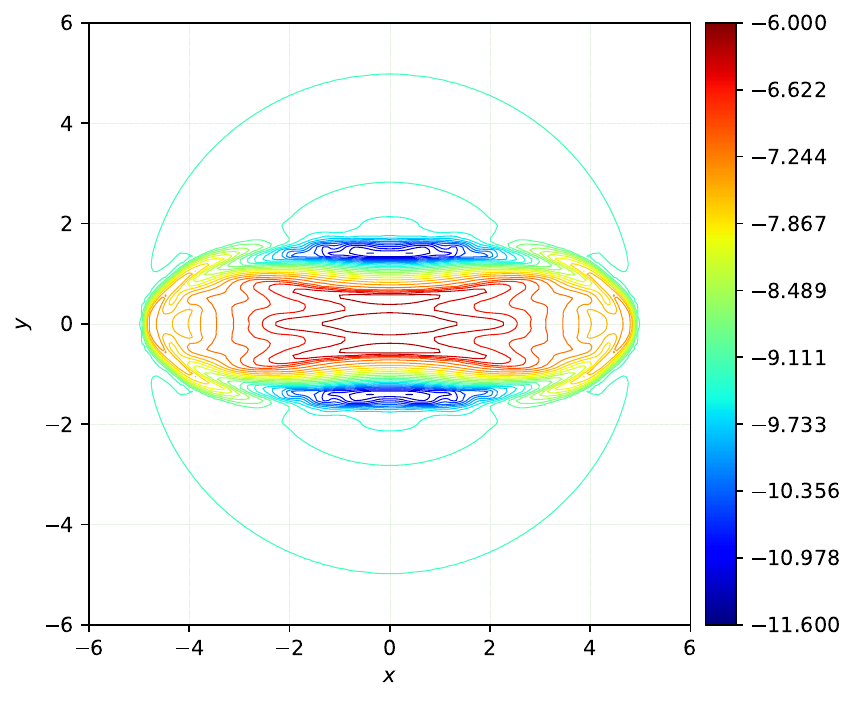}
			}
			\subfigure[Plot of $\log(p_i+p_e)$]{
				\includegraphics[width=2.2in, height=1.9in]{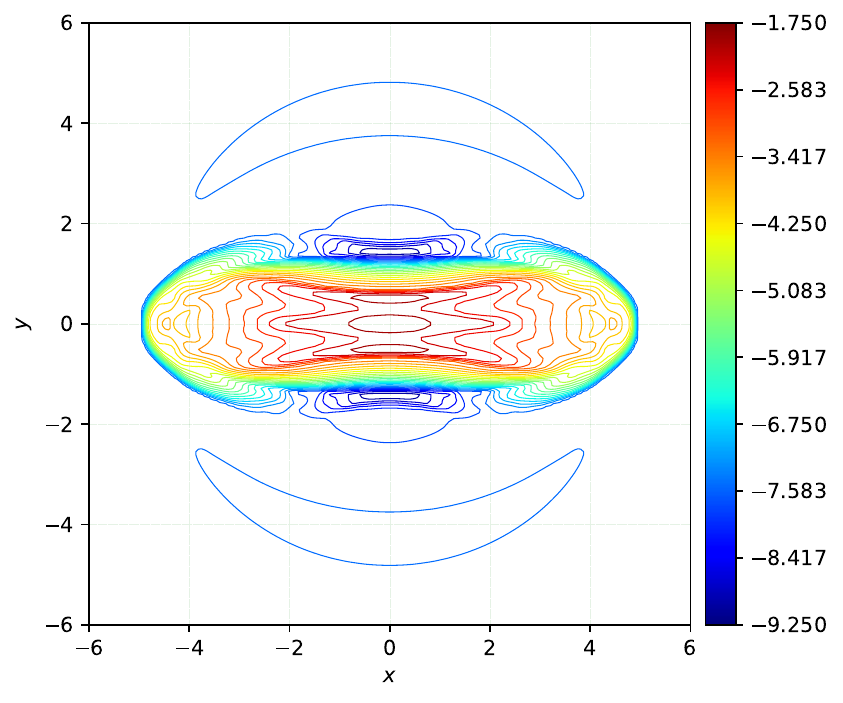}
			}
			\subfigure[Plot of ion Lorentz factor ($\Gamma_i$)]{
				\includegraphics[width=2.2in, height=1.9in]{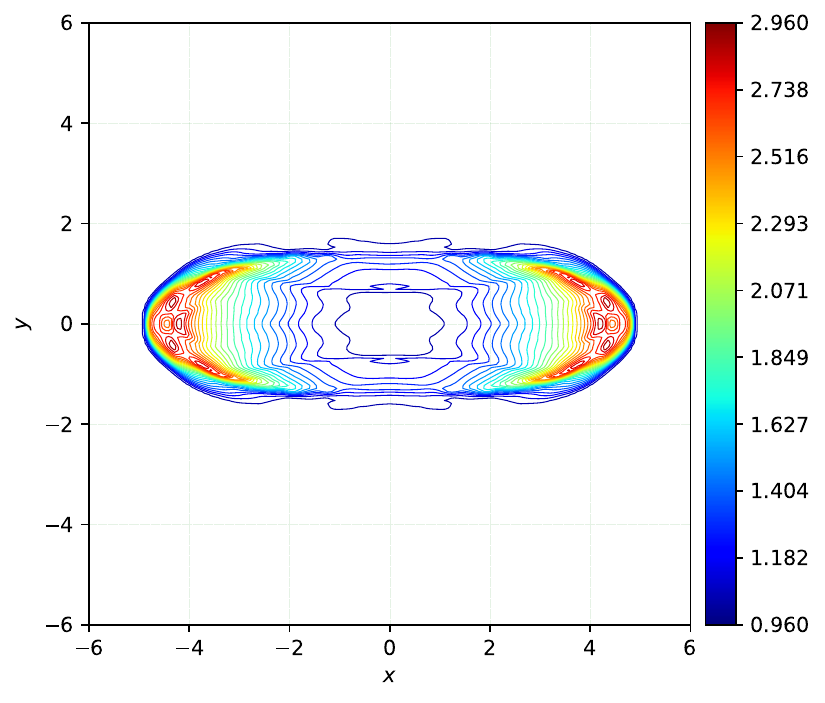}
			}
			\subfigure[Plot of magnitude of the magnetic field ($\dfrac{|\bm{B}|^2}{2}$)]{
				\includegraphics[width=2.2in, height=1.9in]{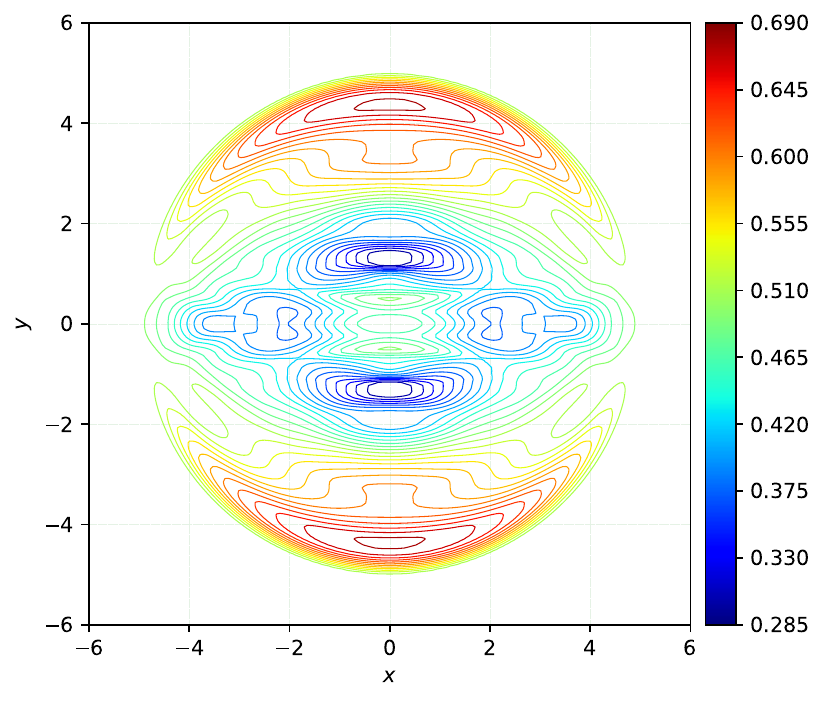}
			}
			\caption{\nameref{test:2d_blast}: Plots for the strongly magnetized medium $B_0=1$, using $\fourth$ scheme with $200\times200$ cells. }
			\label{fig:blast_o4_1_0}
		\end{center}
	\end{figure}
	
	\begin{figure}[!htbp]
		\begin{center}
			\subfigure[Plot of total entropy evolution for weakly magnetized medium, $B_0=0.1$.]{
				\includegraphics[width=2.2in, height=1.6in]{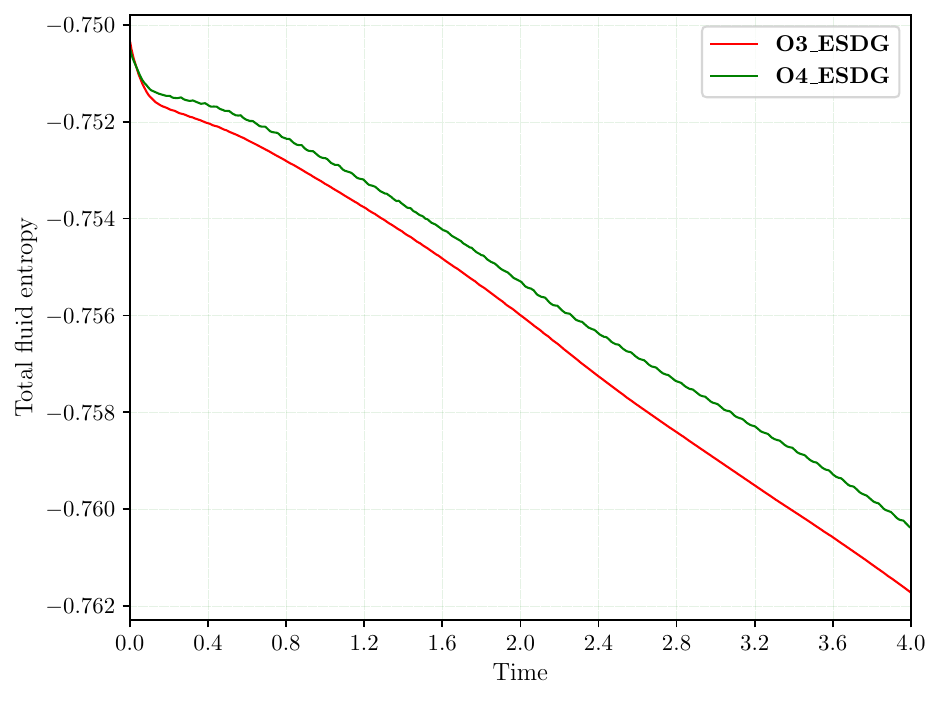}
				\label{fig:blast_0p1_entropy}}
			\subfigure[Plot of total entropy evolution for strongly magnetized medium, $B_0=1$.]{
				\includegraphics[width=2.2in, height=1.6in]{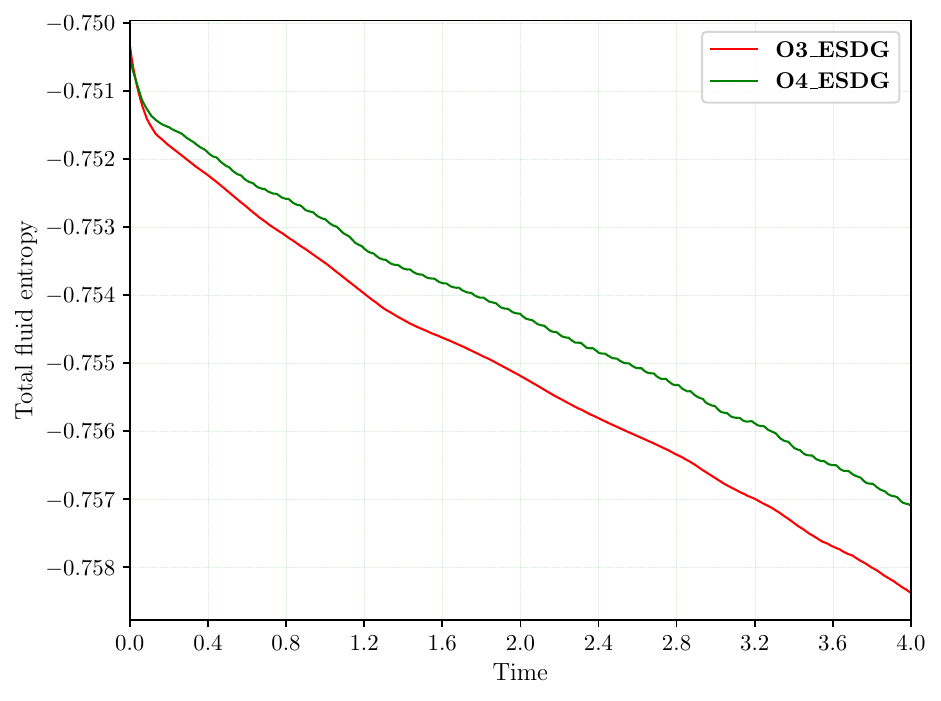}
				\label{fig:blast_1p0_entropy}}
			\caption{\nameref{test:2d_blast}: Plots of total entropy evolution for $\third$ and $\fourth$ schemes with $200\times200$ cells.}
			\label{fig:blast_entropy}
		\end{center}
	\end{figure}	
	
	In Figure \ref{fig:blast_entropy}, we plot the evolution of the total entropy for the $\third$ and $\fourth$ schemes. In both weakly and strongly magnetized cases, we observe that the $\third$ scheme is more entropy diffusive than the $\fourth$ scheme.
	
	\subsubsection{Relativistic two-fluid GEM challenge problem} \label{test:2d_gem}
	In this well-known test case, we consider a two-fluid relativistic extension of the GEM (Geospace Environment Modeling) magnetic reconnection problem. The non-relativistic GEM  magnetic reconnection test was considered in~\cite{Birn2001}, and the extension to two-fluid relativistic case is described in~\cite{Amano2016}. The computational domain is $[-L_x/2,L_x/2] \times [-L_y/2,L_y/2]$, where $L_x=8\pi$ and $L_y=4\pi$. We use periodic boundary conditions at $x=\pm L_x/2$ and conducting wall boundary at $y=\pm L_y/2$ boundary. We set $r_i=1$ and $r_e=-25$ with the ion-electron mass ratio as $m_i/m_e = 25$ where $m_i=1$. The problem is initialized in the $x$-direction with the state given by $B_x(y) = B_0 \tan (y/d)$ with $B_0=1$, where $d=1$ is the thickness of the current sheet. The unperturbed magnetic field components, $B_y$ and $B_z$, are set to zero. \reva{Following~\cite{Amano2016}, we introduce the magnetization parameter, denoted by $\sigma_s$, as the ratio between the cyclotron and plasma frequency squared. For the considered unperturbed magnetic field, the magnetized parameter simplifies to $\sigma_s=B_0^2/m_s$ ($s\in{i,e}$), which gives $\sigma_i=1$ and $\sigma_e=25$. This confirms that the charged particles are strongly magnetized.} The remaining initial conditions are given by
	\begin{align*}
		\begin{pmatrix}
			\rho_i  \\
			v_{z_i} \\
			p_i     \\ \\
			\rho_e  \\
			v_{z_e} \\
			p_e     \\ \\
			B_x     \\
			B_y
		\end{pmatrix} =
		\begin{pmatrix}
			n                                                  \\
			\frac{c}{2d}\frac{B_0 \mathrm{sech}^2(y/d)}{n}              \\
			0.2 + \frac{B_0^2 \mathrm{sech}^2(y/d)}{4} \frac{5}{24 \pi} \\ \\
			\frac{m_e}{m_p} n                                  \\
			-v_{z_i}                                          \\
			- p_i                                              \\ \\
			B_0 \tan (y/d) - B_0 \psi_0 \frac{\pi}{L_y} \cos\left(\frac{\pi x}{L_x}\right) \sin\left(\frac{\pi y}{L_y}\right)
			\\
			B_0 \psi_0 \frac{\pi}{L_x} \sin\left(\frac{\pi x}{L_x}\right) \cos\left(\frac{\pi y}{L_y}\right)
		\end{pmatrix}
	\end{align*}
	where $n=\mathrm{sech}^2(y/d)+0.2$ is the number density. We set the remaining variables to zero. The resistivity constant is given by $\eta=0.01$ and the adiabatic index is taken to be $\gamma = 4/3$. The computations are performed on a grid of $N_x \times N_y$ zones (where $N_x=256$ and $N_y=N_x/2$). We plot the total density ($\rho_i+\rho_e$), the $z$-component of the magnetic field $\bm{B}$, the $x$-component of ion velocity and $x$-component of electron velocity. These plots overlap with the field's magnetic field lines $(B_x,B_y)$.
	
	In Figures \ref{fig:gem_o3_40} and  \ref{fig:gem_o4_40}, we have plotted the results for the $\third$ and $\fourth$ schemes at $t=40$. We observe that magnetic reconnection is already underway. Furthermore, both schemes result in similar results.  In Figures \ref{fig:gem_o3_80} and  \ref{fig:gem_o4_80}, we observe that the magnetic reconnection is significantly increased, and the fluid velocities have jumped significantly. Again both schemes have produced similar results. We also note that at $t=80$, the fluid state is significantly turbulent. We observe that the results are comparable to those presented in~\cite{Amano2016},\cite{Balsara2016}. In Figure \ref{fig:gem_entropy}, we present the time evolution of the total entropy for the $\third$ and $\fourth$ schemes. We observe both schemes have almost the same entropy decay.
 	
	In Figure~\ref{fig:gem_recon}, we have plotted magnetic reconnection flux $\psi(t)$,
	\begin{equation*}
		\psi(t) = \dfrac{1}{2 B_0} \int_{-L_x/2}^{L_x/2}
		| B_y(x,y=0,t) | dx.
	\end{equation*}
	for the $\second$, $\third$ and $\fourth$ schemes. We have compared them with the rates presented in~\cite{Amano2016}. We observe that all the results match the published results.

	\begin{figure}[!htbp]
		\begin{center}
			\subfigure[Total Density ($\rho_i + \rho_e$)]{
				\includegraphics[width=2.2in, height=1.2in]{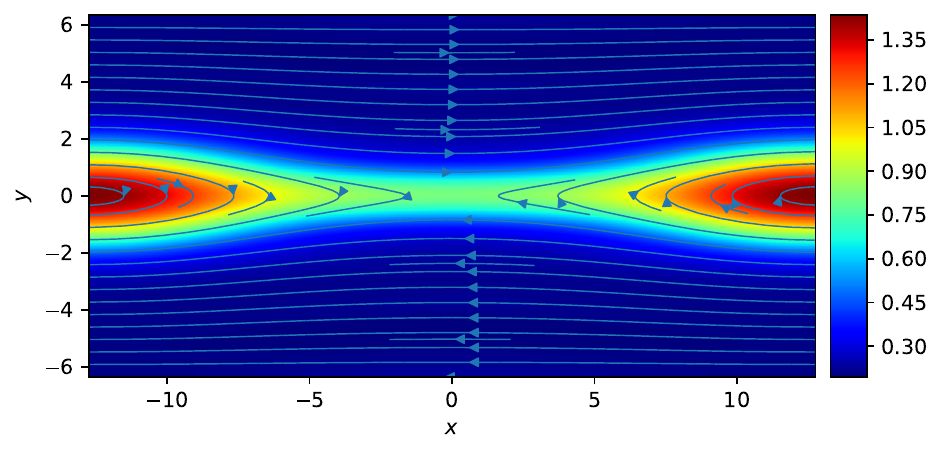}
				\label{fig:gem_o3_40_rho}}
			\subfigure[$B_z$]{
				\includegraphics[width=2.2in, height=1.2in]{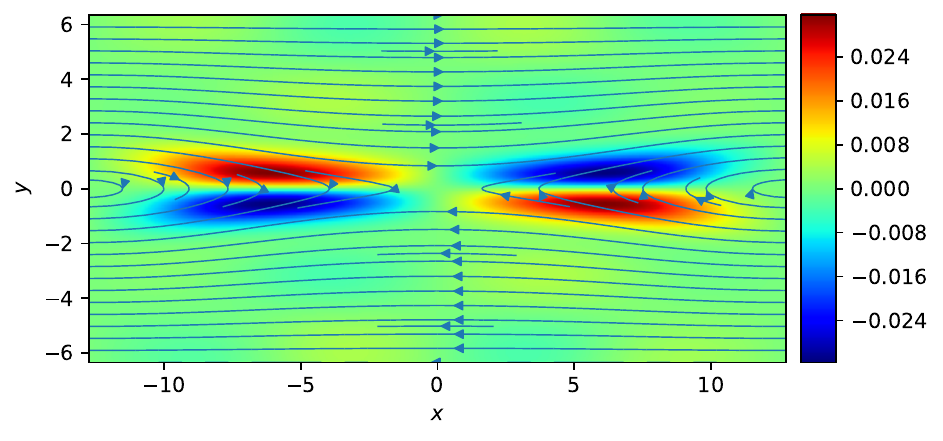}
				\label{fig:gem_o3_40_Bz}}
			\subfigure[Ion $x$-velocity]{
				\includegraphics[width=2.2in, height=1.2in]{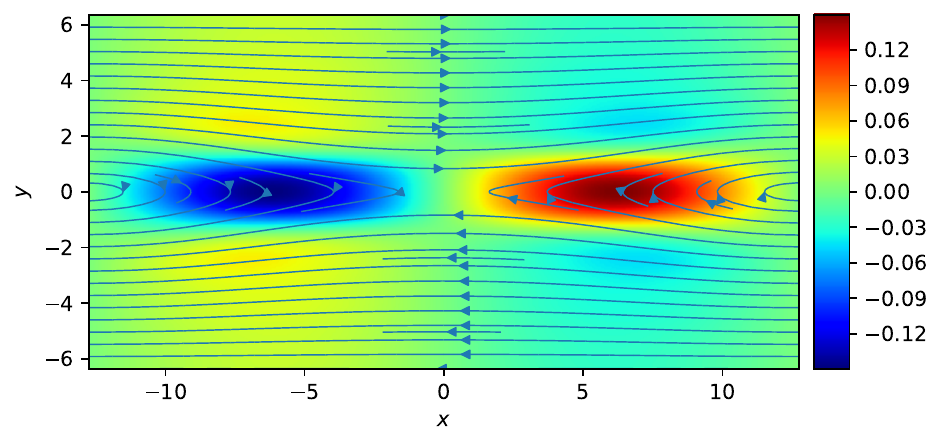}
				\label{fig:gem_o3_40_uxi}}
			\subfigure[Electron $x$-velocity]{
				\includegraphics[width=2.2in, height=1.2in]{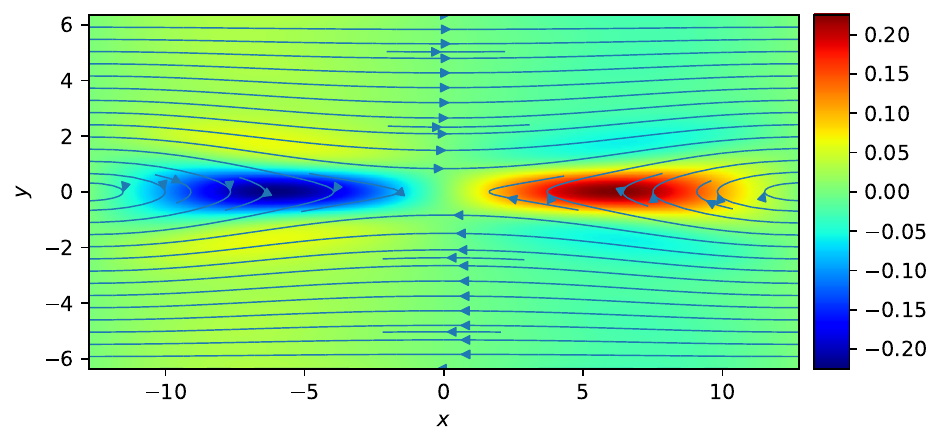}
				\label{fig:gem_o3_40_uxe}}
			\caption{\nameref{test:2d_gem}: Plots for the Total Density, $B_z$-component, Ion $x$-velocity, and Electron $x$-velocity on the rectangular grid of size $256 \times 128$, using the scheme $\third$, at time $t=40$. }
			\label{fig:gem_o3_40}
		\end{center}
	\end{figure}
	
	\begin{figure}[!htbp]
		\begin{center}
			\subfigure[Total Density ($\rho_i + \rho_e$)]{
				\includegraphics[width=2.2in, height=1.2in]{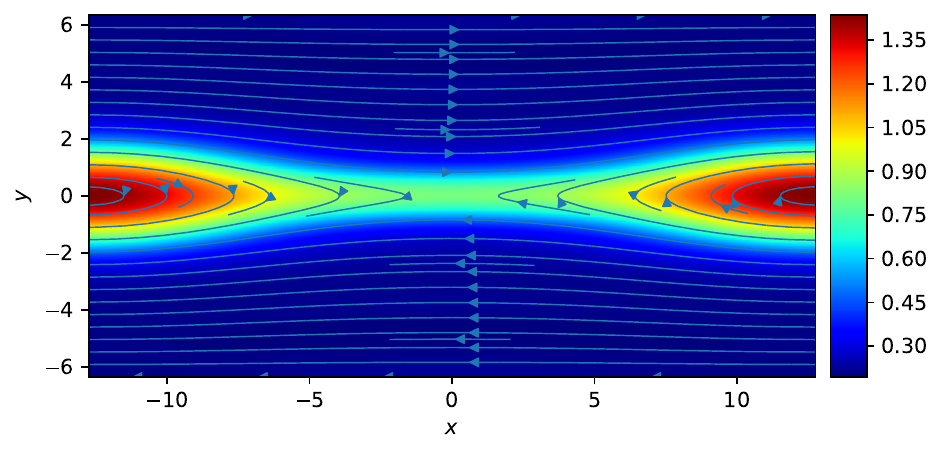}
				\label{fig:gem_o4_40_rho}}
			\subfigure[$B_z$]{
				\includegraphics[width=2.2in, height=1.2in]{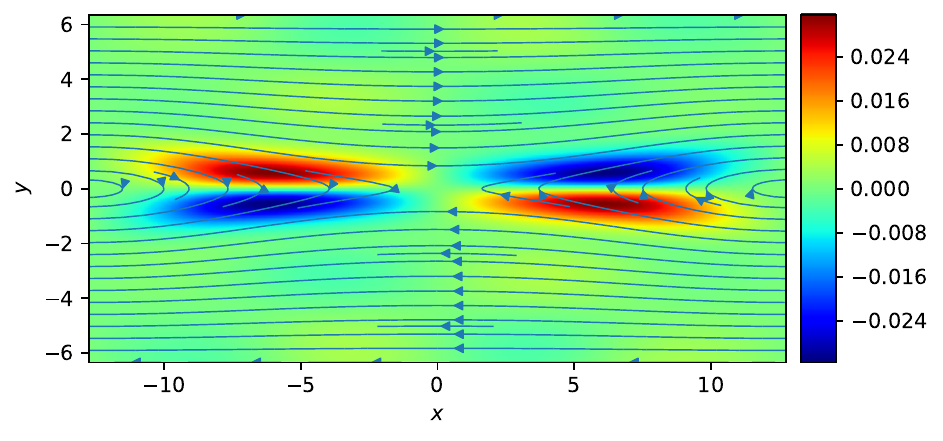}
				\label{fig:gem_o4_40_Bz}}
			\subfigure[Ion $x$-velocity]{
				\includegraphics[width=2.2in, height=1.2in]{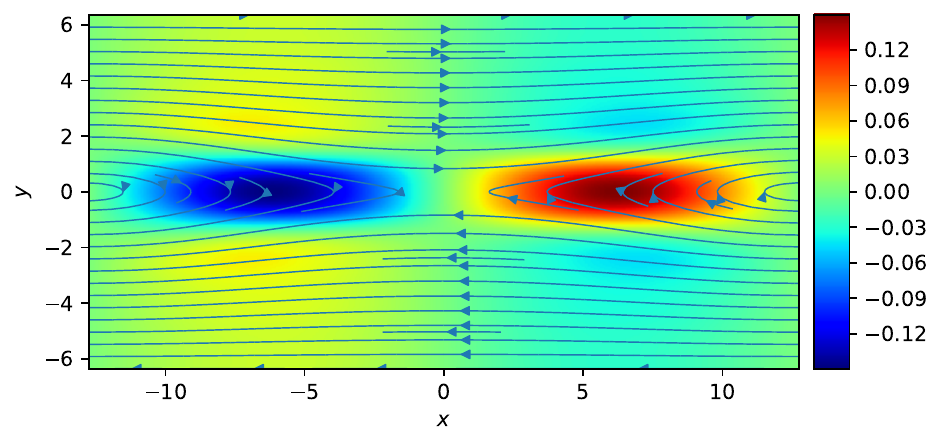}
				\label{fig:gem_o4_40_uxi}}
			\subfigure[Electron $x$-velocity]{
				\includegraphics[width=2.2in, height=1.2in]{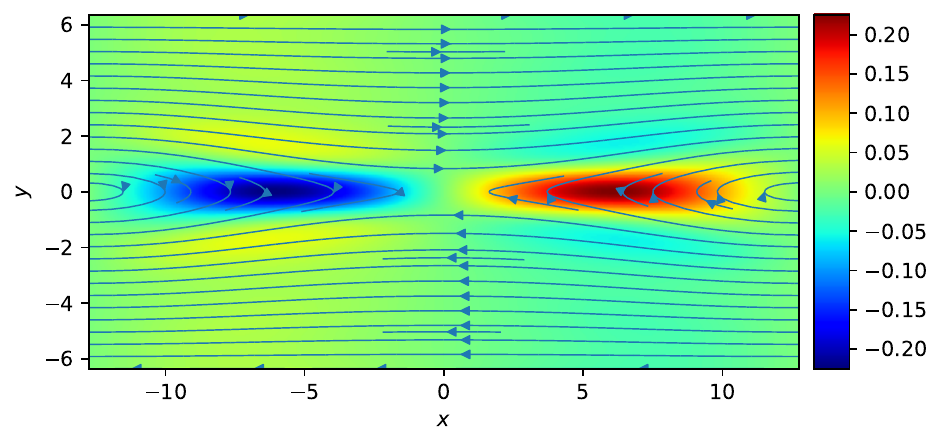}
				\label{fig:gem_o4_40_uxe}}
			\caption{\nameref{test:2d_gem}: Plots for the Total Density, $B_z$-component, Ion $x$-velocity, and Electron $x$-velocity on the rectangular grid of size $256 \times 128$, using the scheme $\fourth$, at time $t=40$. }
			\label{fig:gem_o4_40}
		\end{center}
	\end{figure}

	\begin{figure}[!htbp]
		\begin{center}
			\subfigure[Total Density ($\rho_i + \rho_e$)]{
				\includegraphics[width=2.2in, height=1.2in]{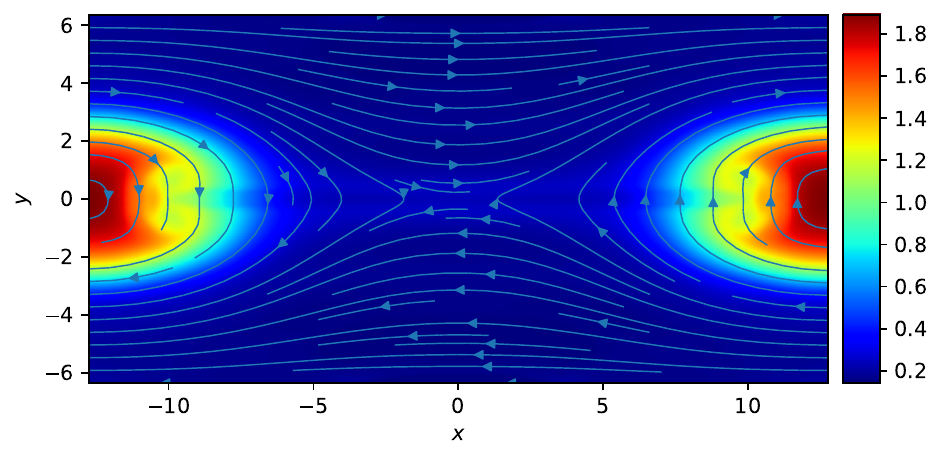}
				\label{fig:gem_o3_80_rho}}
			\subfigure[$B_z$]{
				\includegraphics[width=2.2in, height=1.2in]{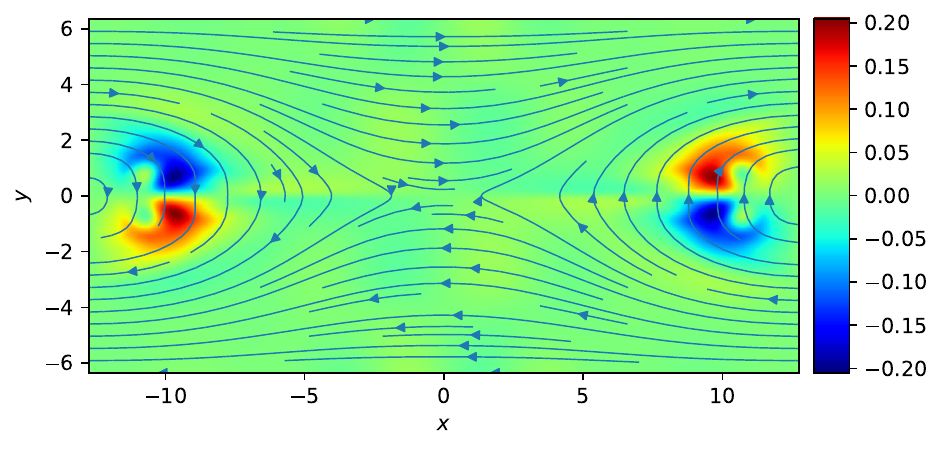}
				\label{fig:gem_o3_80_Bz}}
			\subfigure[Ion $x$-velocity]{
				\includegraphics[width=2.2in, height=1.2in]{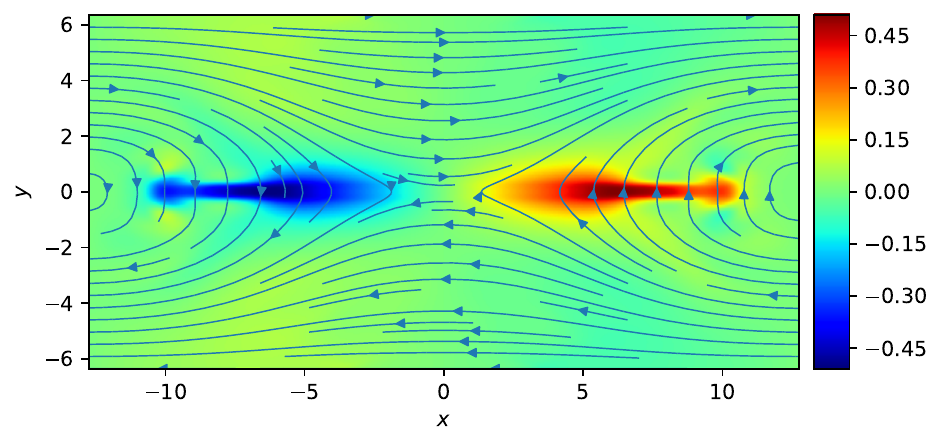}
				\label{fig:gem_o3_80_uxi}}
			\subfigure[Electron $x$-velocity]{
				\includegraphics[width=2.2in, height=1.2in]{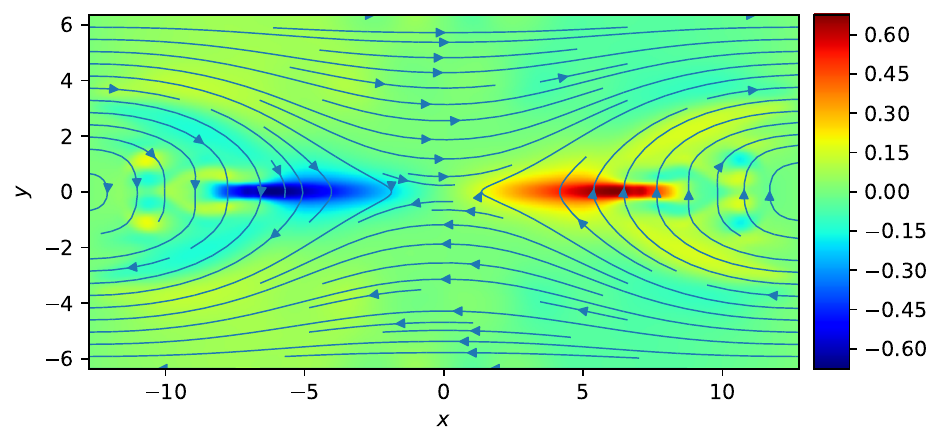}
				\label{fig:gem_o3_80_uxe}}
			\caption{\nameref{test:2d_gem}: Plots for the Total Density, $B_z$-component, Ion $x$-velocity, and Electron $x$-velocity on the rectangular grid of size $256 \times 128$, using the scheme $\third$, at time $t=80$. }
			\label{fig:gem_o3_80}
		\end{center}
	\end{figure}
	
	\begin{figure}[!htbp]
		\begin{center}
			\subfigure[Total Density ($\rho_i + \rho_e$)]{
				\includegraphics[width=2.2in, height=1.2in]{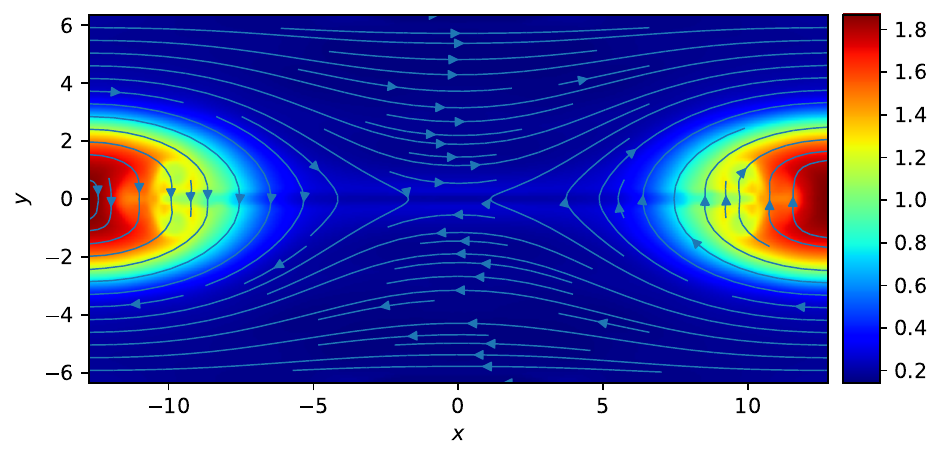}
				\label{fig:gem_o4_80_rho}}
			\subfigure[$B_z$]{
				\includegraphics[width=2.2in, height=1.2in]{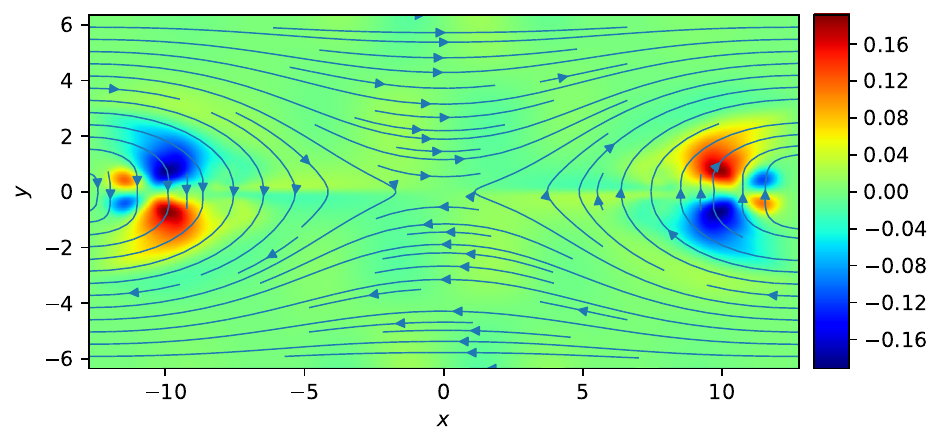}
				\label{fig:gem_o4_80_Bz}}
			\subfigure[Ion $x$-velocity]{
				\includegraphics[width=2.2in, height=1.2in]{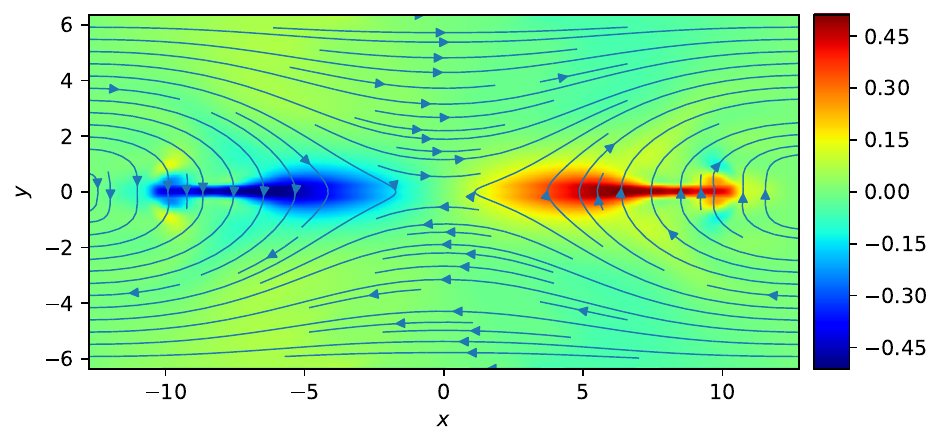}
				\label{fig:gem_o4_80_uxi}}
			\subfigure[Electron $x$-velocity]{
				\includegraphics[width=2.2in, height=1.2in]{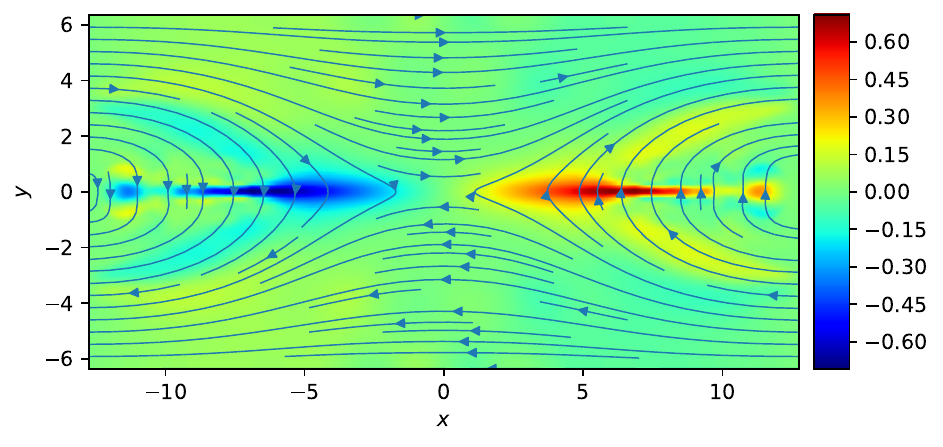}
				\label{fig:gem_o4_80_uxe}}
			\caption{\nameref{test:2d_gem}: Plots for the Total Density, $B_z$-component, Ion $x$-velocity, and Electron $x$-velocity on the rectangular grid of size $256 \times 128$, using the scheme $\fourth$, at time $t=80$. }
			\label{fig:gem_o4_80}
		\end{center}
	\end{figure}
	
        \begin{figure}[!htbp]
		\begin{center}
			\subfigure{
				\includegraphics[width=2.5in, height=1.8in]{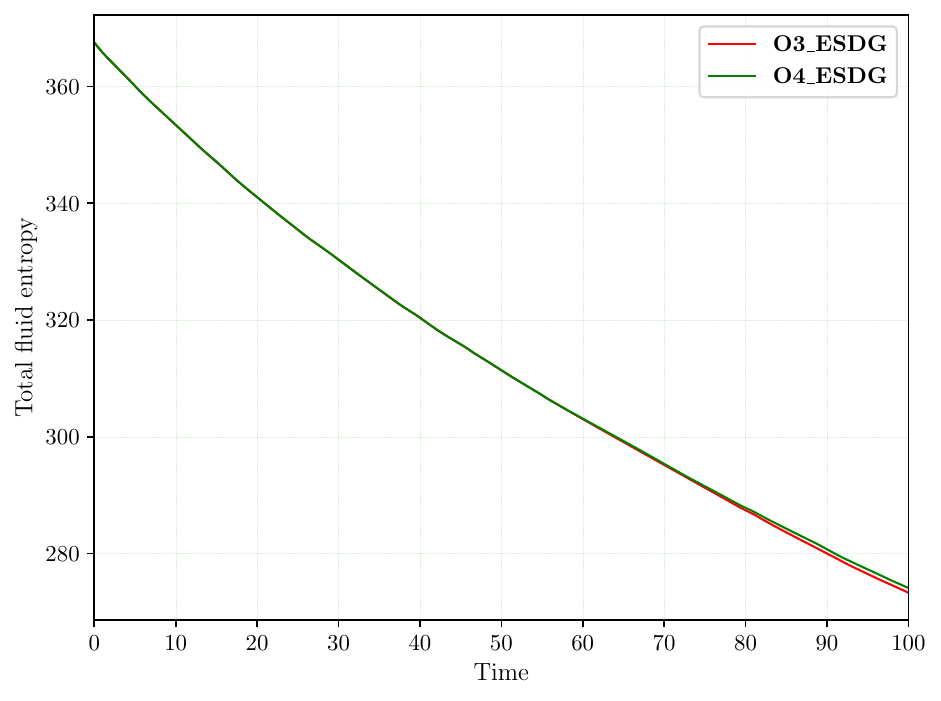}
			}
			\caption{\nameref{test:2d_gem}: Plots of the total entropy evolution using the $\third$ and $\fourth$ scheme with $256\times128$ cells.}
			\label{fig:gem_entropy}
		\end{center}
	\end{figure}
	
	\begin{figure}[!htbp]
		\begin{center}
			\subfigure{
				\includegraphics[width=2.5in, height=1.8in]{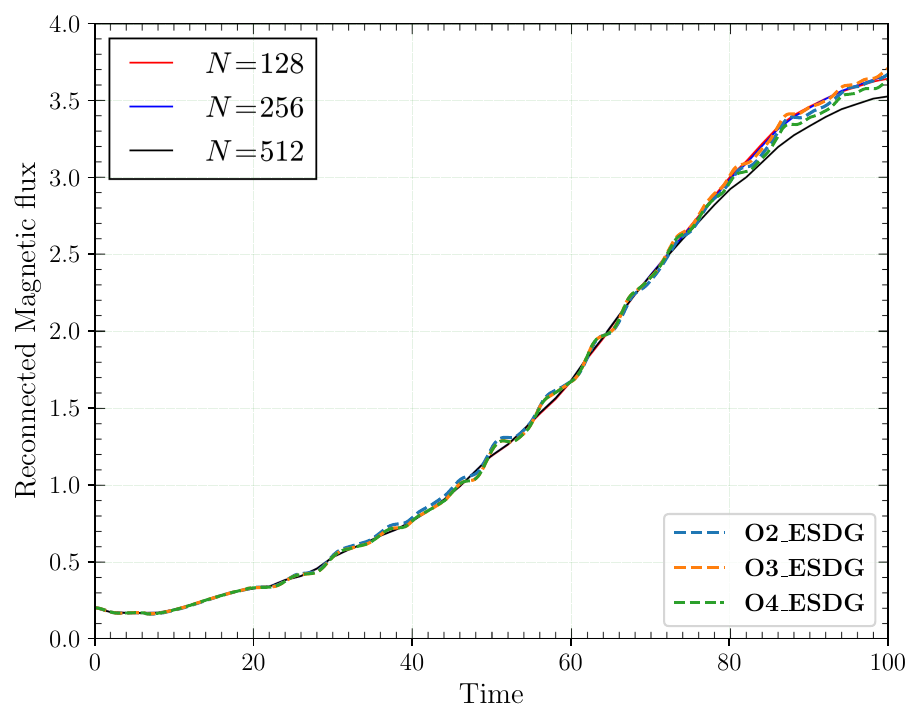}
			}
			\caption{\nameref{test:2d_gem}: Time development of the reconnected magnetic flux with $256\times128$ cells for $\second$, $\third$ and $\fourth$ schemes. We overlay the plot on plot from \cite{Amano2016} (solid lines) for comparison. }
			\label{fig:gem_recon}
		\end{center}
	\end{figure}
	
	\section{Conclusion}
	\label{sec:conclusion}
	In this article, we have designed high-order entropy stable discontinuous Galerkin schemes for the two-fluid relativistic plasma flow equations. The schemes are an extension of the entropy stable DG scheme for RHD equations and are based on the framework presented in \cite{Chen2017}. We prove that the schemes are entropy stable in one and two dimensions. We then use these schemes to simulate several challenging problems in one and two dimensions. We first show that the scheme indeed has desired order of accuracy for smooth solutions. We then test the schemes to simulate the Brio-Wu shock tube problem and demonstrate that the third- and fourth-order schemes can capture finite skin depth effects even at coarser mesh. We also show that the proposed schemes are able to compute resistive RMHD solutions very effectively. In two dimensions, we first compute the Orzag-Tang vortex problem and show that the proposed schemes are highly accurate. We also observe that the entropy decays significantly only when discontinuities are present in the solution. To demonstrate the performance of the schemes on strong shock solutions, we compute the Blast wave problem in both weakly and strongly magnetized mediums. We show that the schemes are suitable for the computation of solutions containing shocks, even in two dimensions. Finally, we simulate the GEM problem and demonstrate that the schemes are suitable for capturing complicated flow features and reconnection phenomenon.

\begin{acknowledgements}
		The work of Praveen Chandrashekar is supported by the Department of Atomic Energy,  Government of India, under project no.~12-R\&D-TFR-5.01-0520. The work of Harish Kumar is supported in parts by DST-SERB, MATRICS grant	with file No. MTR/2019/000380 and VAJRA Grant with file no. VJR-2018-000129
\end{acknowledgements}

	%
	\section*{Conflict of interest}
	The authors declare that they have no conflict of interest.
	\section*{Data Availability Declaration}
	Data will be made available on reasonable request.

\bibliographystyle{spmpsci}      
\bibliography{esdg_tfrhd}

	\appendix
	
	\reva{
    \section{Computation of Primitive Variables from the Conservative variables}
    \label{ap:cons2prim}
    The evolutionary equations for the two-fluid relativistic plasma system~\eqref{eq:TFRP_sys} relies on the conserved quantities, $D_\alpha$, $\bm{M}_\alpha$ and $\mathcal{E}_\alpha$ and primitive quantities, $\rho_\alpha$, $\vel_\alpha$ and $p_\alpha$. We use the numerical procedure given in~\cite{Bhoriya2020},\cite{Schneider1993} for the extraction of primitive variables from the conservative variables. As the procedure for obtaining primitives for the ions and electron fluid quantities is identical, we only focus on the ion-variables. Accordingly, for simplicity, we suppress the subscript $\alpha$. For the ideal equation of state \eqref{EOS}, we obtain the following quartic polynomial for the velocity variable $v=|\vel|$ with real coefficients which only depend on the conservative variables. 
    \begin{equation}
    {v}^4 + c_3 {v}^3 + c_2 {v}^2 + c_1 {v} + c_0=0, \label{polynomial}
    \end{equation}
    where
    \begin{gather*}
    c_3 = -\frac{2 \gamma (\gamma - 1) M \mathcal{E}}{(\gamma - 1)^2({M}^2 + D^2)},\;\;
    c_2= \frac{(\gamma^2 \mathcal{E}^2 + 2(\gamma - 1){M}^2 - (\gamma - 1)^2 D^2)}{(\gamma - 1)^2({M}^2 + D^2)},
    \\
    c_1=\frac{-2 \gamma {M} \mathcal{E}}{(\gamma - 1)^2({M}^2 + D^2)},\;\;
    c_0 = \frac{{M}^2}{(\gamma - 1)^2({M}^2 + D^2)},
    \end{gather*} 
    with $M=(M_x^2+M_y^2+M_z^2)^{1/2}$. The quartic polynomial~\eqref{polynomial} is solved using the Newton's root solver with the number of iterations fixed to ten. We use the following initial guess $v_0$ to initialize the root solver,
    \begin{equation*}
    	v_0=\frac{1}{2}(v_{lb}+v_{ub})+z,
    \end{equation*}
    where
	\begin{align*}
    v_{lb}&=\frac{1}{2M(\gamma - 1)}(\gamma \mathcal{E} - \sqrt{\gamma^2 \mathcal{E}^2 - 4(\gamma - 1)M^2} ),
	\\
    v_{ub}&=\min{\left(1, \frac{M}{\mathcal{E}}+ \delta \right)},
	\\
	z &=
	\begin{cases}
        \dfrac{1}{2}\left(1-\dfrac{D}{\mathcal{E}}\right)(u_{lb}-u_{ub})  &  \text{ if } u_{lb}>10^{-9}
		\\
        0 & \text{ otherwise}
	\end{cases}.
	\end{align*}
    After obtaining the velocity norm, $v$, we can extract the primitive variables using the following expressions.
    \begin{gather*}
    \rho = \frac{D}{\Gamma}, \\
    v_x=\frac{M_x}{M} v, \ \ \ v_y=\frac{M_y}{M} v, \ \ \ v_z=\frac{M_z}{M} v,  \\
    p=(\gamma-1)(\mathcal{E}- M_x v_x- M_y v_y -M_z v_z -\rho ).
    \end{gather*}
	}
	\section{Right eigenvectors for the two-dimensional two-fluid relativistic plasma system}\label{ap:right_eig}
	In this appendix, we provide expressions of the right eigenvectors for the two-dimensional relativistic plasma system~\eqref{conservedform_A}. The set of right eigenvectors of the matrix $\mathbf{A}^x$ corresponding to the eigenvalues $\eigval^x$ of the system~\eqref{conservedform_A} is obtained by setting $d=x$ in the ordered set 
	$
	\mathbf{R}^d_{\Lambda^d} 
	= 
	\left\{
	\left(
	\mathbf{R}_{\Lambda^d}^d
	\right)_n: \ n = 1, \ 2 ,\ 3,\dots, 18
	\right\}$. 
	For $n=1,2,3,\dots,18,$ the vectors $\left(\mathbf{R}_{\Lambda^d}^d\right)_n$ are given by 
	\begin{align} \label{x_eigvec}
		\left(\mathbf{R}_{\Lambda^d}^d\right)_n =
		\begin{cases}
			\Big((\mathbf{R}_{i,k}^{d})_{1 \times 5}, 
			\mathbf{0}_{1 \times 5},
			\mathbf{0}_{1 \times 8}\Big)^\top, 
			&
			1 \leq n \leq 5, \ k = n
			\\
			\Big(\mathbf{0}_{1 \times 5},
			(\mathbf{R}_{e,k}^{d})_{1 \times 5}, 
			\mathbf{0}_{1 \times 8}\Big)^\top, 
			&
			6 \leq n \leq 10, \ k = n-5
			\\
			\Big(\mathbf{0}_{1 \times 5},
			\mathbf{0}_{1 \times 5}, 
			(\mathbf{R}_{m,k}^{d})_{1 \times 8}\Big)^\top, 
			&
			11 \leq n \leq 18, \ k = n-10,
		\end{cases}
	\end{align}		
	where $\mathbf{R}^{d}_{\alpha,k}$, $\alpha \in \{i,e\}$, is the $k^{th}$ column vector of the $5 \times 5$ right eigenvector matrices $\mathbf{R}^d_\alpha$ of the flux jacobians $\dfrac{\partial\mathbf{f}^d_\alpha}{\partial \mathbf{U}_\alpha}$, and $\mathbf{R}^{d}_{m,k}$ is the $k^{th}$ column vector of the right eigenvector matrix $\mathbf{R}^d_m$ of the flux jacobian matrix $\dfrac{\partial\mathbf{f}^d_m}{\partial \mathbf{U}_m}$. The matrices $\mathbf{R}^{k}_{\alpha}$ and $\mathbf{R}^{k}_{m}$ has the following expressions. 
	
	\begin{itemize}
		\item For $d =x,y$, $\alpha \in \{i,e\}$, the right eigenvector matrix $\mathbf{R}^d_\alpha$ is given by the equation 
		\begin{equation*} 
			\mathbf{R}_{\alpha}^{d}= \left(\dfrac{\partial \mathbf{U}_\alpha}{\partial \mathbf{W}_\alpha} \right) \mathbf{R}_{\alpha,\mathbf{W}}^d
		\end{equation*}
		where $\mathbf{R}_{\alpha,\mathbf{W}}^d$ is the matrix of right eigenvectors of system~\eqref{conservedform_A} written in primitive form. For $d=x$, the matrix $\mathbf{R}_{\alpha,\mathbf{W}}^d$ has the expression 
		\begin{gather*}
			\mathbf{R}_{\alpha,\mathbf{W}}^x = 
			\begin{pmatrix*}
				\frac{1}{c_\alpha^2 h_\alpha} & 1 & 0 & 0 & \frac{1}{c_\alpha^2 h_\alpha}  
				\\
				\frac{-\sqrt{Q^x_\alpha}}{c_\alpha h_\alpha \Gamma_\alpha \rho_\alpha} & 0 & 0 & 0 &  \frac{+\sqrt{Q^x_\alpha}}{c_\alpha h_\alpha \Gamma_\alpha \rho_\alpha}
				\\
				\frac{\left(c_\alpha-\Gamma_\alpha \sqrt{Q^x_\alpha} {v_{x_\alpha}}\right) {v_{y_\alpha}}}{c_\alpha h_\alpha \Gamma^2_\alpha \rho_\alpha \left(v_{x_\alpha}^2-1\right)} & 0 & 1 & 0 & \frac{\left(c_\alpha+\Gamma_\alpha \sqrt{Q^x_\alpha} {v_{x_\alpha}}\right) {v_{y_\alpha}}}{c_\alpha h_\alpha \Gamma^2_\alpha \rho_\alpha \left(u_{x_\alpha}^2-1\right)}  
				\\
				\frac{\left(c_\alpha-\Gamma_\alpha \sqrt{Q^x_\alpha} {v_{x_\alpha}}\right) {v_{z_\alpha}}}{c_\alpha h_\alpha \Gamma^2_\alpha \rho_\alpha \left(v_{x_\alpha}^2-1\right)} & 0 & 1 & 0 & \frac{\left(c_\alpha+\Gamma_\alpha \sqrt{Q^x_\alpha} {v_{x_\alpha}}\right) {v_{z_\alpha}}}{c_\alpha h_\alpha \Gamma^2_\alpha \rho_\alpha \left(v_{x_\alpha}^2-1\right)}  
				\\
				1 & 0 & 0 & 0 & 1
			\end{pmatrix*}. 
		\end{gather*}
		\item The eigenvector matrix $\mathbf{R}^{d}_{m}$ for $d=y$ is given by 
		\begin{gather*}
			\mathbf{R}_{m}^{x}=
			\begin{pmatrix}
				0 &-1 & 0 & 0 & 0 & 0 & 1 & 0 \\
				0 & 0 & 0 & 1 & 0 &-1 & 0 & 0 \\
				0 & 0 &-1 & 0 & 1 & 0 & 0 & 0 \\
			   -1 & 0 & 0 & 0 & 0 & 0 & 0 & 1 \\
				0 & 0 & 1 & 0 & 1 & 0 & 0 & 0 \\
				0 & 0 & 0 & 1 & 0 & 1 & 0 & 0 \\
				1 & 0 & 0 & 0 & 0 & 0 & 0 & 1 \\
				0 & 1 & 0 & 0 & 0 & 0 & 1 & 0 
			\end{pmatrix}.  
		\end{gather*}
	\end{itemize}
	\begin{remark}
		For the $y-$directional flux, $\mathbf{f}^y$, we proceed similarly to obtain the set of eigenvalues $\eigval^y$ of the jacobian matrix $\dfrac{\partial \mathbf{f}^y}{\partial \mathbf{U}}$ as
		\begin{align*} 
			\eigval^y 
			= 
			\biggl\{
			& 	\frac{(1-c_i^2)v_{y_i}-(c_i/\Gamma_i) \sqrt{Q_i^y}}{1-c_i^2 |\vel|_i^2},\
			v_{y_i}, \ v_{y_i}, \ v_{y_i},
			\frac{(1-c_i^2)v_{y_i}+(c_i/\Gamma_i) \sqrt{Q_i^y}}{1-c_i^2 |\vel|_i^2}, 
			\nonumber
			\\  
			& 	\frac{(1-c_e^2)v_{y_e}-(c_e/\Gamma_e) \sqrt{Q_e^y}}{1-c_e^2 |\vel|_e^2},\
			v_{y_e}, \ v_{y_e}, \ v_{y_e},
			\frac{(1-c_e^2)v_{y_e}+(c_e/\Gamma_e) \sqrt{Q_e^y}}{1-c_e^2 |\vel|_e^2}, 
			\nonumber
			\\
			& -\chi, \ -\kappa, \ -1, \ -1, \ 1, \ 1, \ \kappa, \ \chi
			\biggr\},
		\end{align*}
		where, 
		$
		Q_\alpha^y=1-u_{y_\alpha}^2-c_\alpha^2 (u_{x_\alpha}^2+u_{z_\alpha}^2), \ \alpha \in \{ i, \ e \}
		$. 
		The corresponding right eigenvectors can be obtained by taking $d=y$ in the ordered set $\mathbf{R}^d_{\Lambda^d} = \left\{\left(\mathbf{R}_{\Lambda^d}^d\right)_n: \ n = 1, \ 2 ,\ 3,\dots, \ 18\right\}$ where $\left(\mathbf{R}_{\Lambda^d}^d\right)_n$ is defined by Eqn.~\eqref{x_eigvec} and the updated $y-$directional matrices $\mathbf{R}_{\alpha,\mathbf{W}}^y$ and $\mathbf{R}^{y}_{m}$ are given by
		\begin{gather*}
			\mathbf{R}_{\alpha,\mathbf{W}}^y = 
			\begin{pmatrix}
				\frac{1}{c_\alpha^2 h_\alpha} & 1 & 0 & 0 & \frac{1}{c_\alpha^2 h_\alpha}  
				\\
				\frac{\left(c_\alpha-\Gamma_\alpha \sqrt{Q^y_\alpha} {v_{y_\alpha}}\right) {v_{x_\alpha}}}{c_\alpha h_\alpha \Gamma^2_\alpha \rho_\alpha \left(v_{y_\alpha}^2-1\right)} & 0 & 1 & 0 & \frac{\left(c_\alpha+\Gamma_\alpha \sqrt{Q^y_\alpha} {v_{y_\alpha}}\right) {v_{x_\alpha}}}{c_\alpha h_\alpha \Gamma^2_\alpha \rho_\alpha \left(v_{y_\alpha}^2-1\right)}  
				\\
				\frac{-\sqrt{Q^y_\alpha}}{c_\alpha h_\alpha \Gamma_\alpha \rho_\alpha} & 0 & 0 & 0 &  \frac{+\sqrt{Q^y_\alpha}}{c_\alpha h_\alpha \Gamma_\alpha \rho_\alpha}
				\\
				\frac{\left(c_\alpha-\Gamma_\alpha \sqrt{Q^y_\alpha} {v_{y_\alpha}}\right) {v_{z_\alpha}}}{c_\alpha h_\alpha \Gamma^2_\alpha \rho_\alpha \left(v_{y_\alpha}^2-1\right)} & 0 & 0 & 1 & \frac{\left(c_\alpha+\Gamma_\alpha \sqrt{Q^y_\alpha} {v_{y_\alpha}}\right) {v_{z_\alpha}}}{c_\alpha h_\alpha \Gamma^2_\alpha \rho_\alpha \left(v_{y_\alpha}^2-1\right)}  
				\\
				1 & 0 & 0 & 0 & 1
			\end{pmatrix}, 
			\\
			\mathbf{R}_{m}^{y}=
			\begin{pmatrix*}
				0 & 0 & 0 &-1 & 0 & 1 & 0 & 0 \\
				0 &-1 & 0 & 0 & 0 & 0 & 1 & 0 \\
				0 & 0 & 1 & 0 &-1 & 0 & 0 & 0 \\
				0 & 0 & 1 & 0 & 1 & 0 & 0 & 0 \\
			   -1 & 0 & 0 & 0 & 0 & 0 & 0 & 1 \\
				0 & 0 & 0 & 1 & 0 & 1 & 0 & 0 \\
				1 & 0 & 0 & 0 & 0 & 0 & 0 & 1 \\
				0 & 1 & 0 & 0 & 0 & 0 & 1 & 0
			\end{pmatrix*}  
		\end{gather*}
	\end{remark}

\end{document}